\DeclareMathAlphabet{\mathpzc}{OT1}{pzc}{m}{it}
\newcommand*\circled[1]{\tikz[baseline=(char.base)]{
            \node[shape=circle,draw,inner sep=0.2pt] (char) {#1};}}
\DeclareSymbolFont{SY}{U}{psy}{m}{n}
\DeclareMathSymbol{\emptyset}{\mathord}{SY}{'306}
\theoremstyle{plain}
\newtheorem{thm}{Theorem}[section]
\newtheorem{cor}[thm]{Corollary}
\newtheorem{lem}[thm]{Lemma}
\newtheorem{prop}[thm]{Proposition}
\theoremstyle{definition}
\newtheorem{defn}[thm]{Definition}
\newtheorem{rem}[thm]{Remark}
\numberwithin{equation}{section}
\def\A{{\mathcal A}}
\def\C{{\mathbb C}}
\def\D{{\mathbb D}}
\def\B{{\mathbb B}}
\def\S{{\mathbb S}}
\def\O{{\mathcal O}}
\def\H{{\mathcal H}}
\def\K{{\mathscr K}}
\def\dbar{\bar\partial}
\def\inp#1,#2{\left\langle{#1},{#2}\right\rangle}
\def\bf#1{{\textbf{#1}}}
\def\N{\mathbb{N}}
\def\R{\mathbb{R}}
\def\A{\mathbb{A}}
\def\K{\mathbb{K}}
\def\si{\sigma}
\def\l{\lambda}
\def\O{\Omega}
\def\ra{\rightarrow}
\def\ov{\overline}
\def\a{\alpha}
\def\b{\beta}
\def\G{\Gamma}
\def\h{ hermitian holomorphic vector bundle}
\def\w{with respect to }
\def\beq{\begin{eqnarray}}
\def\eeq{\end{eqnarray}}
\def\beqa{\begin{eqnarray*}}
\def\eeqa{\end{eqnarray*}}
\def\del{\partial}
\def\T{\boldsymbol{T}}
\def\M{\boldsymbol{M}}
\def\rkhs{reproducing kernel Hilbert space }
\def\<{\langle}
\def\>{\rangle}
\def\bz{\boldsymbol{z}}
\def\bw{\boldsymbol{w}}
\def\bf{\boldsymbol{f}}
\def\bg{\boldsymbol{g}}
\def\bA{\boldsymbol{A}}
\def\bH{\boldsymbol{H}}
\def\bl{\pmb{\lambda}}
\def\bmu{\pmb{\mu}}
\def\ba{\pmb{\alpha}}
\def\bb{\pmb{\beta}}
\newcommand{\be}{\begin{equation}}
\newcommand{\ee}{\end{equation}}
\newcommand{\bea}{\begin{eqnarray}}
\newcommand{\eea}{\end{eqnarray}}
\newcommand{\Bea}{\begin{eqnarray*}}
\newcommand{\Eea}{\end{eqnarray*}}
\newcounter{cnt1}
\newcounter{cnt2}
\newcounter{cnt3}
\newcommand{\blr}{\begin{list}{$($\roman{cnt1}$)$}
 {\usecounter{cnt1} \setlength{\topsep}{0pt}
 \setlength{\itemsep}{0pt}}}
\newcommand{\bla}{\begin{list}{$($\alph{cnt2}$)$}
 {\usecounter{cnt2} \setlength{\topsep}{0pt}
 \setlength{\itemsep}{0pt}}}
\newcommand{\bln}{\begin{list}{$($\arabic{cnt3}$)$}
 {\usecounter{cnt3} \setlength{\topsep}{0pt}
 \setlength{\itemsep}{0pt}}}
\newcommand{\el}{\end{list}}
\newcommand{\overbar}[1]{\mkern 1.5mu\overline{\mkern-1.5mu#1\mkern-1.5mu}\mkern 1.5mu}
\DeclareMathOperator{\Aut}{Aut}\DeclareMathOperator{\aut}{Aut}
\DeclareMathOperator{\mob}{\text{M\"ob}}
\newcounter{defcounter}
\begin{document}
\title[Homogeneous vector bundles and operators]{Homogeneous Hermitian Holomorphic  Vector Bundles And  Operators In The Cowen-Douglas Class Over The Poly-disc}
\author[P. Deb]{Prahllad Deb}
\address[P. Deb]{Department of Mathematics, Ben-Gurion University of the Negev, Beer-Sheva, Israel-84105.}
\email[P.Deb]{prahllad.deb@gmail.com} 
\author[S. Hazra]{Somnath Hazra}
\address[S. Hazra]{Mathematics Institute, Silesian University in Opava, Na Rybnicku 626/1, 74601, Opava.}
\email[S. Hazra]{somnath.hazra.2008@gmail.com} 
%\author[G. Misra]{Gadadhar Misra}
%\address[G. Misra]{Department of Mathematics, Indian Institute of Science,  
%Bangalore 560 012}
%\email[G. Misra]{gm@iisc.ac.in}
%\begin{abstract}
%\author{Gadadhar Misra and Avijit Pal}

%\address{Department of Mathematics\\Indian Institutte of Science\\Bangalore 560012}
\keywords{Cowen-Douglas class, Homogeneous operators, Hermitian holomorphic homogeneous vector bundles, Curvature, Representation, Lie algebra, Lie group} 
\subjclass[2020]{Primary: 47B32, 47B13, 17B10 Secondary: 20C25, 53C07}
\thanks{The research of the first named author was supported through IISER Kolkata Ph.D Fellowship, J C Bose National Fellowship of G. Misra followed by a post-doctoral fellowship at Ben-Gurion University of the Negev, Israel. The research of the second named author was supported through research Fellowships of CSIR, IISc, NBHM post-doctoral Fellowship followed by GA CR grant  No. 21-27941S. Some of the results in this paper are from the PhD thesis of the first named author submitted to the Indian Institute of Science Education and Research Kolkata and from the PhD thesis of the second named author submitted to the Indian Institute of Science. }
%%%%%%%%%%%%%%%%%%%%%%%%%%%%%%%%%%%%%%%%%%%%%%%%%%%%%%%%%%%%%%%%%%%%%%%%%%

\begin{abstract} In this article, we obtain two sets of results. The first set of results are for the case of the bi-disc while the second set of results describe in part, which of these carry over to the general case of the poly-disc. 
%The first set of complete results are exclusively for the case of the bi-disc while the second set of results describe in part, which of these carry over to the general case of the poly-disc:
\begin{enumerate}
%\item[]
\vspace{0.05in}
\item [] A classification of irreducible hermitian holomorphic vector bundles over $\mathbb{D}^2$, homogeneous with respect to M\"ob$\times $M\"ob, is obtained assuming that the associated representations are \textit{multiplicity-free}. 
Among these the ones that give rise to an operator in the Cowen-Douglas class of $\mathbb{D}^2$ of rank $1,2$ or $3$ are determined.  
%\item[]
%\vspace{0.03in}

%\item[] Any hermitian holomorphic vector bundle of rank $2$ over $\mathbb{D}^n$, homogeneous with respect to the $n$-fold product of the group M\"ob is shown to be a tensor product of $n-1$ hermitian holomorphic line bundles, each of which is homogeneous with respect to M\"ob and a hermitian holomorphic vector bundle of rank $2$, homogeneous with respect to M\"ob. 
%\item[]
\vspace{0.03in}
\item[] Any hermitian holomorphic vector bundle of rank $2$ over $\D^n$, homogeneous with respect to the $n$-fold direct product of the group $\mob$ is shown to be a tensor product of $n$ hermitian holomorphic vector bundles over $\D$. Among them, $n-1$ are shown to be the line bundles and one is shown to be a rank $2$ bundle. Also, each of the bundles are homogeneous with respect to $\mob$.

\vspace{0.03in}
\item[] The classification of irreducible homogeneous hermitian holomorphic vector bundles over $\mathbb{D}^2$ of rank $3$ (as well as the corresponding Cowen-Douglas class of operators) is extended to the case of $\mathbb{D}^n$, $n>2$. 
%\item[]
\vspace{0.03in}
\item[] It is shown that there is no irreducible $n$ - tuple of operators in the Cowen-Douglas class  $\mathrm B_2(\mathbb{D}^n)$ that is homogeneous with respect to Aut$(\mathbb{D}^n)$, $n >1$. 
%\item[]
%\item[] 
Also, pairs of operators in $\mathrm B_3(\mathbb{D}^2)$  homogeneous with respect to Aut$(\mathbb{D}^2)$ are produced, while it is shown that no $n$ - tuple of operators in $\mathrm B_3(\mathbb{D}^n)$ is homogeneous with respect to Aut$(\mathbb{D}^n)$, $n > 2$. 
\end{enumerate} 

\end{abstract}

%%%%%%%%%%%%%%%%%%%%%%%%%%%%%%%%%%%%%%%%%%%%%%%%%%%%%%%%%%%%%%%%%%%%%%%%%%

\maketitle

%%%%%%%%%%%%%%%%%%%%%%%%%%%%%%%%%%%%%%%%%%%%%%%%%%%%%%%%%%%%%%%%%%%%%%%%%%

\section{Introduction}
We let $\mob$ denote the bi-holomorphic automorphism group of the unit disc $\D:=\{z\in \mathbb C: |z| < 1\}$. A bounded linear operator on a complex separable Hilbert space $\H$ is said to be homogeneous if the spectrum $\si(T)$ of $T$ is contained in $\overbar{\D}$ and, for every $g\in \mob$, $g(T)$ is unitarily equivalent to $T$. The condition $\si(T) \subseteq \overbar{\D}$ ensures that $g(T)$ is well defined for every $g \in \mob$. Indeed, since if $g \in \mob$, then $g(z) = e^{i \theta} \frac{z - a}{1 - \bar{a}z}$ for some $\theta \in \mathbb{R},\,\,a, z \in \D$ and therefore, $g(T) = e^{i \theta} (T - aI)(I - \bar{a}T)^{-1}$ is well defined. 

The class of homogeneous operators has been studied in a number of articles \cite{THS, HOPRMGS, HOHS, ACHOCD, HOJCS, OICHO, SH}. 
%It is known  that an irreducible homogeneous operator $T$ acting on a Hilbert space $\mathcal H$ is a block shift, specifically, $\H$ is the orthogonal direct sum of subspaces $V_n$, $n\in\mathcal I$, where $\mathcal I$ is either the set of a) integers, b) non-negative integers, or c) non-positive integers, such that $T(V_n)\subset V_{n+1}$ for each $n\in \mathcal I$, see \cite[Definition 2.2]{THS}.
%All irreducible homogeneous operators, for which dim$V_n\leq 1$ as well as dim$V_n \leq 2$, have been classified in \cite{THS} and \cite{SH}, respectively. 
The classification of irreducible homogeneous operators in the Cowen-Douglas class over $\D$ has been completed recently (cf. \cite{ACHOCD}).
%by first obtaining an explicit description of all the homogeneous {\h}s over $\D$ and then deciding which ones of these give rise to bounded operators in the Cowen-Douglas class. 
The notion of a homogeneous operator has a natural generalization to commuting tuples of operators. A commuting $n$ - tuple of operators $(T_1, T_2,\ldots ,T_n)$ is said to be homogeneous with respect to a subgroup $G$ of the group of bi-holomorphic automorphisms $\aut(\D^n)$ of $\D^n$, if  the joint spectrum of $(T_1, T_2,\ldots , T_n)$ lies in $\overline{\mathbb{D}}^n$ and $g(T_1, T_2,\ldots , T_n)$ is unitarily equivalent with $(T_1, T_2,\ldots ,T_n)$ for all $g \in G$. Let $\mob^n$ denote the direct product of $n$-copies of $\mob$. Note that $\aut(\D^n)$ is the semi-direct product of $\mob^n$ and the permutation group $S_n$ of $n$-elements. In this generality, the focus has been on the study of commuting tuples of operators in the Cowen-Douglas class which are homogeneous with respect to either $\mob^n$ or $\aut(\D^n)$. First, we recall the definition of the Cowen-Douglas class. This important class consisting of bounded operators was introduced in \cite{CGOT}, then modified to include pairs of operators in  \cite{OPOSE} and finally in \cite{GBKCD} a class of commuting $n$ - tuple of bounded operators with similar properties was introduced. We reproduce the definition following \cite{GBKCD}.

Let $\O\subset\C^n$ be a bounded domain and $D_{\T} : \H \to \H \oplus \cdots \oplus \H$ be the operator given by the formula: $D_{\T} h = \left(T_1 h,\hdots,T_n h\right)$, $h \in \H.$ A commuting $n$ - tuple of bounded linear operators $\T=(T_1,\hdots,T_n)$ on $\H$ is said to be in $\mathrm B_r(\Omega)$ if 
\begin{itemize} 
\item $\dim \ker D_{\T - \bz I}=r$, $\bz\in \Omega$;
\item  $\text{ran} D_{\T - \bz I}$ is closed in $\H\oplus\cdots\oplus\H$;
\item the linear span of the vectors in $\ker D_{\T - \bz I}$, $\bz\in  \Omega$ is dense in $\mathcal H$. 
\end{itemize}

Following the ideas of \cite{OPOSE}, it is easy to  establish a one to one correspondence between the unitary equivalence class of commuting $n$ - tuples in $\mathrm B_r(\D^n)$ and  equivalence class of  the corresponding  {\h} $E$ over $\D^n$. The equivalence class of the {\h} is the local equivalence of the hermitian structure. These vector bundles are distinguished, among others, by the property that the hermitian structure on the fibre over $\bz\in \D^n$ is induced from the inner product of a fixed Hilbert space $\mathcal H$.  It has been proved in \cite{GBKCD} that the corresponding $n$ - tuple of operators $\T$ is simultaneously unitarily equivalent to the adjoint of the $n$ - tuple of multiplication operators $\M=(M_{z_1},\hdots,M_{z_n})$ by the coordinate functions on a Hilbert space $\H_K$ possessing a reproducing kernel $K$. 

An automorphism of a vector bundle $\pi: E \to \Omega$ is a diffeomorphism $\hat{g}:E \to E$ such that $\pi\circ \hat{g} = g \circ \pi$ for some automorphism $g:\Omega \to \Omega$. The bundle map $\hat{g}$ is a lift of $g$. We say that a holomorphic hermitian vector bundle $E$ is homogeneous \w a subgroup $G$ of the bi-holomorphic automorphism group of $\Omega$ if the action of $G$ on $\O$ is transitive and the lift of this action is isometric with respect to the hermitian structure of the bundle. 

The correspondence between unitary equivalence classes of operators $\boldsymbol T$ in $\mathrm B_r(\Omega)$ and the local hermitian equivalence classes of hermitian holomorphic vector bundles $E_{\boldsymbol T}$ on $\Omega$  determined by $\boldsymbol T$ extends to homogeneous operators and homogeneous vector bundles. In this paper, we exploit this correspondence in the case of $\Omega=\mathbb D^n$ using the explicit representation theory of the identity component of the automorphism group of $\mathbb D^n$. 

Thus the  classification of homogeneous $n$ - tuples of operators in $\mathrm B_r(\D^n)$ is the same as that of the classification  of homogeneous {\h}s over $\D^n$ such that the hermitian structure is induced from a reproducing kernel. 

\textit{Main Question:} {What are the irreducible $n$ - tuples of  operators in $\mathrm B_r(\D^n)$, $r\geq 1$ and $n>1$, which are homogeneous \w $\aut(\D^n)$, or the subgroup $\mob^n$ of $\aut(\D^n)$? There will be a slight advantage if we replace the groups by their universal covering groups while noting that it makes no difference to the question of homogeneity.} 

Homogeneous operators in $\mathrm B_r(\Omega)$ \w the automorphism group of an irreducible bounded symmetric domain $\Omega$ have been studied in \cite{HHVCDBSYMD, HVBIOSD, OFGAB}.

Following the general principles outlined in \cite{ACHOCD, HHVCDBSYMD}, it is evident that the irreducible homogeneous holomorphic vector bundles over $\mathbb D^n$ of rank $r$ are in one to one correspondence with $r$ dimensional indecomposable linear representations of the $n$-fold direct sum $\mathfrak{b}^n$ of the solvable Lie subalgebra $\mathfrak{b} \subseteq \mathfrak{sl}(2,\C)$ consisting of lower triangular matrices. Among these, we want only those which are skew-hermitian on the sub-algebra $\mathfrak k$, the useful property of these being that $\rho(h)$ for any   $h$ in the sub-algebra $\mathfrak k$ is diagonalizable. This gives the hermitian structure for the homogeneous holomorphic vector bundle. Thus the indecomposable $r$ dimensional linear representations of the Lie algebra $\mathfrak{b}^n$ with this additional property is one of the main ingredients of our proof.  These representations have been described for $n=1$ in \cite[Section 2, Page 6]{MFHOCD}. 
Let $\mathbbm 1$ be the trivial representation  and $\rho$ be an indecomposable representation  of $\mathfrak b$. For  $v_i \in \mathfrak b,\,\, 1\leq i \leq n,$ define the representation of $\mathfrak b^n$ by the rule $$\rho(v_1)  \otimes I \otimes \cdots \otimes I + I \otimes \mathbbm 1(v_2) \otimes I \otimes \cdots \otimes I + \cdots + I\otimes \cdots \otimes I \otimes \mathbbm 1(v_n)$$ 
acting on the $n$-fold tensor product of the representation spaces. This defines a family of indecomposable representations of $\mathfrak{b}^n$. 
%{\color{red} Note that here tensor product representation of the direct sum of two Lie algebras $\mathfrak{g}$ and $\mathfrak{h}$ means the new representation which assigns to a pair $(X, Y)$ from $\mathfrak{g} \oplus \mathfrak{h}$, the operator $s(X) \otimes I + I\otimes t(Y)$, where $s$ (resp $t$) is the representation of $\mathfrak{g}$ (resp. $\mathfrak{h}$).} 
It is natural to ask if these are all the indecomposable representations of $\mathfrak{b}^n$. We show that the answer is affirmative if $r=1$ or $2$, see Theorem \ref{Description of all indecomposable rep}.  Moreover, we describe all indecomposable \textit{multiplicity-free} (Definition \ref{mult free}) representations of $\mathfrak{b}^2$ explicitly in terms of representations of $\mathfrak b$, see Theorem \ref{Classification of rep}.

In the following section, we obtain a transformation rule for the curvature tensors of {\h}s over $\D^n$ homogeneous \w the universal covering group of $\mob^n$ which will be used in subsequent sections. Note that this formula (Proposition \ref{Transformation rule for curvature}), however, holds for any bounded domain $\O\subset \C^n$. 

All holomorphic line bundles over $\D^n$, which are homogeneous with respect to either the subgroup $\mob^n$ or $\aut(\D^n)$, are described in Section \ref{Section 3}. 
%Combining the change of variable formula with the transformation rule of the curvature forced by homogeneity, we write down the curvature $(1,1)$ form explicitly for a homogeneous operator in $\mathrm B_r(\D^n)$. Since the curvature is a complete unitary invariant for $n$ - tuple of operators in the Cowen-Douglas class of rank $1$,  we obtain a complete list of unitarily inequivalent homogeneous tuples from the curvature in Section \ref{Section 3}.  
When this rank is more than $1$, determining the class of homogeneous tuples  in $\mathrm B_{r}(\mathbb{D}^{n})$  is much more difficult.  In this paper, a complete list of inequivalent homogeneous tuples in $\mathrm B_{r}(\mathbb{D}^{n})$ is given for $r=2$ and $r = 3$ in Theorem \ref{homogeneous rank 2 bundle} and Theorem \ref{thm17}, respectively, following the technique described in Section \ref{Section 4}. 

%In general, one way to classify all irreducible homogeneous $n$ - tuples in $\mathrm B_r(\D^n)$ is to describe all irreducible homogeneous {\h}s over $\D^n$ and then to decide, using polarization, which of the hermitian structures among these come from reproducing kernels. Section \ref{Section 4} is devoted to depicting a general strategy of describing the hermitian holomorphic vector bundle over $\D^n$ homogeneous with respect to the subgroup $\mob^n$. It turns out that such holomorphic vector bundles are in one to one correspondence with linear representations of the solvable Lie subalgebra $\oplus_{i=1}^n\mathfrak{b}$ of $\oplus_{i=1}^n\mathfrak{sl}(2,\C)$ consisting of the tuples of lower triangular matrices.

In Section \ref{Section 5}, all indecomposable \textit{multiplicity-free} (Definition \ref{mult free}) representations of $\mathfrak{b}\oplus\mathfrak{b}$ are described. Consequently, we obtain a complete characterization of all irreducible {\h}s over $\D^2$ which are homogeneous \w $\mob^2$ and whose associated representations are \textit{multiplicity-free}. This classification result is then used in Section \ref{Section 6} and \ref{Section 8} to determine all irreducible $n$ - tuple of operators in $\mathrm B_2(\D^n)$ and $\mathrm B_3(\D^n)$, respectively, which are homogeneous \w $\mob^n$. We note that while all such representations of dimension $2$ are obtained from tensoring $(n-1)$ one dimensional representations and a two dimensional representation of $\mathfrak b$, this is no longer true when the dimension is three, see Corollary \ref{thm15}. Also, the existence of an irreducible homogeneous hermitian holomorphic vector bundle of rank $3$ which is not equivalent to the tensor product of a homogeneous 
holomorphic hermitian line bundle on $\mathbb D^{n-1}$ and a homogeneous holomorphic hermitian vector bundle of rank 3 on the disc $\mathbb D$ follows from it. Thus, unlike the case of $\mathrm B_1(\D^n)$, or $\mathrm B_2(\D^n)$, there exist $n$ - tuples of homogeneous operators in $\mathrm B_3(\D^n)$ that can not be obtained  from the known list of homogeneous operators in $\mathrm B_r(\D)$ described in \cite{ACHOCD} by taking tensor product. 

In Section \ref{Section 7}, two families of homogeneous irreducible $n$ - tuple of operators in $\mathrm B_3(\D^n)$ are constructed one of which may be seen as a multi-variable analogue of the construction presented in \cite{HOHS}. However, the other family turns out to be a new one occurred only when $n>1$. It is then shown in Section \ref{Section 8} that this list is complete modulo unitary equivalence. Moreover, the homogeneity with respect to $\aut(\D^n)$ is also discussed at the end of Section \ref{Section 8}.

\section{Curvature formulae for Quasi-invariant Kernels}\label{Section 2}
Let $\aut(\D^n)$ be the group of all bi-holomorphic automorphisms of $\D^n$. For any subgroup $G$ of $\Aut(\D^n)$, let $\tilde{G}$ denote the universal cover of $G$. A reproducing kernel $K : \D^n \times \D^n \to \text{M}(r,\C)$ is said to be \textit{quasi-invariant} if there exists a family of holomorphic functions $J_{\tilde{g}} : \D^n \to \text{GL}(r, \C)$, $\tilde{g} \in \tilde{G}$ such that $K$ satisfies the transformation rule $$K(\bz, \bw) = J(\tilde{g}, \bz) K(g \bz,g \bw) \overline{J(\tilde{g}, \bw)},\,\,\bz, \bw \in \mathbb{D}^n,\,\,\tilde{g} \in \tilde{G}$$ where $g=p(\tilde{g})$ and $p:\tilde{G}\ra G$ is the universal covering map. In this section, we first describe the curvature of a quasi-invariant kernel. We also show that these kernels are intimately related to the study of homogeneous operators in the Cowen-Douglas class. 

Let $e_1, \ldots, e_r$ be the standard unit vectors in $\mathbb C^r.$ For $1\leq i \leq r,$  define $s_i:\mathbb D^n \to H_K$ to be the anti-holomorphic map:  $s_i(\bw):=K(\cdot,\bw) e_i,$ $\bw\in \mathbb D^n.$ Clearly, $(s_1, \ldots, s_r)$ defines a trivial anti-holomorphic hermitian vector bundle $E$ of rank $r$ on $\mathbb D^n.$  The fiber of $E$ at $w$ is the $r$ - dimensional subspace $\big \{K(\cdot,\bw) x:x\in \mathbb C^r \big \}$  and the hermitian structure at $\bw$ is given by the positive definite matrix $K(\bw,\bw).$ Thus the curvature $\mathsf K$  of the vector bundle $E$ is a $(1,1)$ form  given by the formula:
$$\mathsf K(\bw) = \sum_{i,j=1}^n \partial_{i} \left [ K(\bw, \bw)^{-1} \overline{\partial_{j}} K(\bw, \bw) \right ] d w_i \wedge d \bar{w}_j.$$
Although, not very common, we will let  
$$
\mathcal K(\bw)= \big ( \!\! \big (\mathcal K^{ij}(\bw) \big ) \!\! \big ), \,\,w\in \mathbb D^n,
$$ 
where $\mathcal K^{ij}(\bw) :=\partial_i \left [ K(\bw, \bw)^{-1} \overline{\partial_{j}} K(\bw, \bw) \right ]$ is the co-efficient of $d w_i \wedge d \bar{w}_j$ in $\mathsf K.$  
We obtain a transformation rule for the curvature whenever the kernel $K$ is quasi-invariant. 

\begin{prop}\label{Transformation rule for curvature}
Let $G$ be a subgroup of $\Aut(\D^n)$. Suppose $J_{\tilde{g}} : \mathbb{D}^n \rightarrow \text{GL}(r, \mathbb{C}),$ $\tilde{g}\in \tilde{G},$ is holomorphic and $K : \mathbb{D}^n \times \mathbb{D}^n \rightarrow \text{M}(r,\C)$ is a kernel. If $K$ is quasi-invariant with respect to $J,$ then we have 
\begin{equation*}
\mathcal{K}(\bz) = \left( Dg(\bz)^t \otimes (J(\tilde{g}, \bz)^*)^{-1}\right) \mathcal{K}(g(\bz)) \left( \overline{Dg(\bz)} \otimes J(\tilde{g}, \bz)^* \right)
\end{equation*}
for $\tilde{g} \in \tilde{G}$ with $g=p(\tilde{g})$, $p:\tilde{G}\ra G$ is the universal covering map and $\,\bz, \bw \in \mathbb{D}^n.$
\end{prop}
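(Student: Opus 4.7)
The plan is to differentiate the quasi-invariance identity on the diagonal $\bw = \bz$, carefully exploiting that $J(\tilde g, \bz)$ and the components of $g$ are holomorphic in $\bz$ while $\overline{J(\tilde g, \bw)}$ is antiholomorphic in $\bw$. Writing $h(\bz) := K(\bz, \bz)$ and $J := J(\tilde g, \bz)$, the quasi-invariance restricted to the diagonal reads $h(\bz) = J\, h(g\bz)\, J^{*}$, and hence $h(\bz)^{-1} = (J^{*})^{-1} h(g\bz)^{-1} J^{-1}$.

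First I would compute $\bar\partial_{j} h(\bz)$ by applying the antiholomorphic derivative in the second slot of $K$ to the quasi-invariance formula and then setting $\bw = \bz$. Since $J(\tilde g, \bz)$ is holomorphic in $\bz$, only the right factor $\overline{J(\tilde g, \bw)}$ and, via the chain rule (with a factor $\overline{(Dg)_{kj}(\bz)}$ appearing because $g$ is holomorphic), the kernel $K(g\bz, g\bw)$ contribute. Schematically,
\[
\bar\partial_{j} h(\bz) = J\Bigl(\sum_{k}\overline{(Dg)_{kj}(\bz)}\,(\bar\partial_{k} h)(g\bz)\Bigr) J^{*} + J\, h(g\bz)\, \bar\partial_{j} J^{*}.
\]
After left-multiplication by $h(\bz)^{-1}$ and the obvious cancellations, the second summand collapses to $(J^{*})^{-1}\bar\partial_{j} J^{*}$, which is purely antiholomorphic in $\bz$.

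Applying $\partial_{i}$ then annihilates this spurious ``connection'' term. In the surviving piece, $(J^{*})^{-1}$, $J^{*}$ and $\overline{(Dg)_{kj}}$ are all antiholomorphic and pass through $\partial_{i}$, so only $h(g\bz)^{-1}(\bar\partial_{k} h)(g\bz)$ gets differentiated. A second chain-rule application ($g$ holomorphic, so $\partial_{i} F(g\bz) = \sum_{l}(Dg)_{li}(\bz)(\partial_{l}F)(g\bz)$) converts this to $\sum_{l}(Dg)_{li}(\bz)\,\mathcal{K}^{lk}(g\bz)$, and assembling the pieces yields
\[
\mathcal{K}^{ij}(\bz) = \sum_{l,k} (Dg)_{li}(\bz)\,\overline{(Dg)_{kj}(\bz)}\,(J^{*})^{-1}\mathcal{K}^{lk}(g\bz)\, J^{*}.
\]

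The final step is to recognize this componentwise identity as the $(i,j)$-block of the tensor-product expression in the statement, via the standard formula $[(A\otimes B) X (C\otimes D)]_{ij} = \sum_{l,k} A_{il} C_{kj}\, B\, X_{lk}\, D$ with $A = Dg(\bz)^{t}$, $C = \overline{Dg(\bz)}$, $B = (J^{*})^{-1}$, $D = J^{*}$, and $X = \mathcal{K}(g\bz)$. The main obstacle is really just bookkeeping: keeping track of which factors are holomorphic (so that the chain rule in the $\bz$ direction produces the $Dg$ factors) versus antiholomorphic (so that the outer $\partial_{i}$ kills the $\bar\partial_{j} J^{*}$ term and passes through $J^{*}$ and $(J^{*})^{-1}$), and then translating the resulting double sum into the tensor-product form asserted in the proposition.
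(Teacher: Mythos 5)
Your proposal is correct and follows essentially the same route as the paper: both differentiate the quasi-invariance identity on the diagonal, observe that the resulting term $(J^*)^{-1}\bar\partial_j J^*$ is antiholomorphic and hence killed by $\partial_i$, and use the chain rule for $g$ to produce the $Dg$ factors; the paper merely organizes this into two separate displayed steps via the auxiliary kernel $K_g(\bz,\bw)=K(g\bz,g\bw)$, whereas you carry out the computation in a single pass. The final bookkeeping identifying the double sum with the $(i,j)$-block of the tensor-product expression is also the same.
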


\begin{proof}
Let $\tilde{g} \in \tilde{G}$ with $g=p(\tilde{g})$ and consider the kernel $K_{g} : \mathbb{D}^n \times \mathbb{D}^n \rightarrow \text{M}(r,\C)$ defined by $K_{g}(\bz, \bw) = K(g( \bz), g( \bw)).$
Since $K$ is quasi-invariant with respect to $J$, we have 
\begin{equation*}
K(\bz, \bw) = J(\tilde{g}, \bz) K(g(\bz), g(\bw)) J(\tilde{g}, \bw)^*
\end{equation*}
from which it follows that
\begin{flalign*}
\mathcal{K}^{i j}_{g}(\bz) &= \partial_{i} \left[ K_{g}(\bz, \bz)^{-1} \overline{\partial_{j}} K_{g}(\bz, \bz) \right]\\
%&= \partial_{i} \left[ \left\lbrace J(\tilde{g}, \bz)^{-1} K(\bz, \bz) \left(J(\tilde{g}, \bz)^*\right)^{-1} \right\rbrace^{-1} \overline{\partial_{j}} \left\lbrace J(\tilde{g}, \bz)^{-1} K(\bz,\bz) \left(J(\tilde{g}, \bz)^*\right)^{-1} \right\rbrace \right]\\
&= \partial_i \left[ J(\tilde{g}, \bz)^* K(\bz, \bz)^{-1} J(\tilde{g}, \bz) \left\lbrace J(\tilde{g}, \bz)^{-1} \overline{\partial_{j}} K(\bz, \bz) \left(J(\tilde{g}, \bz)^* \right)^{-1}\right.\right.\\
& ~~~~~~~~~~~~~~+\left.\left. J(\tilde{g}, \bz)^{-1} K(\bz, \bz) \overline{\partial_{j}}\left(J(\tilde{g}, \bz)^* \right)^{-1} \right\rbrace \right]\\
&= \partial_i \left[ J(\tilde{g}, \bz)^* K(\bz, \bz)^{-1} \overline{\partial_{j}} K(\bz, \bz) \left(J(\tilde{g}, \bz)^*\right)^{-1} + J(\tilde{g}, \bz)^* \overline{\partial_{j}} \left(J(\tilde{g}, \bz)^*\right)^{-1} \right]\\
&= J(\tilde{g}, \bz)^* \partial_{i} \left[ K(\bz, \bz)^{-1} \overline{\partial_{j}} K(\bz, \bz) \right] \left(J(\tilde{g}, \bz)^*\right)^{-1}\\
&= J(\tilde{g}, \bz)^* \mathcal{K}^{i j}(\bz) \left(J(\tilde{g}, \bz)^*\right)^{-1}.
\end{flalign*}
This gives us 
\begin{equation}\label{eqn:5.0.1}
\mathcal{K}_g(\bz) = \left( I \otimes J(\tilde{g}, \bz)^*\right) \mathcal{K}(z) \left( I \otimes (J(\tilde{g}, \bz)^*)^{-1} \right).
\end{equation}
% Include the chain rule for the curvatur.
Also using the chain rule, we obtain  
\begin{equation}\label{eqn:5.0.2}
\mathcal{K}_{g}(\bz) = \left( Dg(\bz)^t \otimes I\right) \mathcal{K}(g(\bz)) \left( \overline{Dg(\bz)} \otimes I \right).
\end{equation}
Combining \eqref{eqn:5.0.1} and \eqref{eqn:5.0.2}, we have 
\begin{equation*}
\mathcal{K}(\bz) = \left( Dg(\bz)^t \otimes (J(\tilde{g}, \bz)^*)^{-1}\right) \mathcal{K}(g(\bz)) \left( \overline{Dg(\bz)} \otimes J(\tilde{g}, \bz)^* \right).
\end{equation*}
verifying the transformation rule for the curvature $\mathcal K$. 
\end{proof}
The authors have benefited from the discussions with Kui Ji, Dinesh Kumar Keshari and Surjit Kumar in obtaining the curvature formula in Proposition \ref{Transformation rule for curvature}.

Since $G$ acts transitively on $\mathbb D^n,$ there is a $g_{\bz}$  in $G$ with  $g_{\bz}(\bz)=0$ for each $\bz\in\D^n$. Substituting $g_{\bz}$ in the transformation rule for the curvature obtained in Proposition \ref{Transformation rule for curvature}, we see that the curvature at any $\bz \in \mathbb D^n,$ is determined from its value at $0.$

\begin{cor}\label{Determintion of curvature knowing at 0}
With notations and assumptions as in Proposition \ref{Transformation rule for curvature}, we have 
\begin{equation*}
\mathcal{K}(\bz) = \left( Dg_{\bz}(\bz)^t \otimes (J(\tilde{g}_{\bz}, \bz)^*)^{-1}\right) \mathcal{K}(0) \left( \overline{Dg_{\bz}(\bz)} \otimes J(\tilde{g}_{\bz}, \bz)^* \right),
\end{equation*}
where $g_{\bz}$ is in $G$ with  $g_{\bz}(\bz)=0$ and $\tilde{g}_{\bz}\in \tilde{G}$ is such that $p(\tilde{g}_{\bz})=g_{\bz}$ for each $\bz\in\D^n$. 
\end{cor}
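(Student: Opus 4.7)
The plan is to obtain this as an immediate specialization of Proposition~\ref{Transformation rule for curvature}. Two preliminary observations are needed before invoking the proposition. First, since $G$ is assumed to act transitively on $\D^n$ (the hypothesis stated just before the corollary), for every $\bz \in \D^n$ we can select some $g_{\bz} \in G$ with $g_{\bz}(\bz) = 0$. Second, since the universal covering map $p : \tilde{G} \to G$ is surjective, we can lift $g_{\bz}$ to some $\tilde{g}_{\bz} \in \tilde{G}$ with $p(\tilde{g}_{\bz}) = g_{\bz}$.

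With these choices in place, I would simply substitute $g = g_{\bz}$ and $\tilde{g} = \tilde{g}_{\bz}$ into the transformation rule supplied by Proposition~\ref{Transformation rule for curvature}. The middle factor $\mathcal{K}(g(\bz))$ then collapses to $\mathcal{K}(g_{\bz}(\bz)) = \mathcal{K}(0)$, and the resulting identity is precisely the formula claimed in the corollary.

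There is no genuine obstacle here: the content of the corollary is that the transformation rule, applied to an automorphism moving $\bz$ to the origin, expresses $\mathcal{K}(\bz)$ purely in terms of $\mathcal{K}(0)$, the differential $Dg_{\bz}(\bz)$, and the cocycle value $J(\tilde{g}_{\bz}, \bz)$. The one thing that might merit a brief remark (though strictly not part of the proof) is that the right-hand side is independent of the specific choice of $g_{\bz}$ and of its lift $\tilde{g}_{\bz}$, which is automatic since the left-hand side does not depend on them.
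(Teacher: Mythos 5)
Your proposal is correct and matches the paper's argument exactly: the paper also obtains the corollary by substituting $g_{\bz}$ (with $g_{\bz}(\bz)=0$, guaranteed by transitivity of the $G$-action) into the transformation rule of Proposition \ref{Transformation rule for curvature}, whereupon the middle factor becomes $\mathcal{K}(0)$. Nothing is missing.
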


\begin{lem}\label{curvature at 0 is diagonal}
Let $J_{\tilde{g}} : \mathbb{D}^n \rightarrow \text{GL}(r, \mathbb{C}),$ $\tilde{g}\in \tilde{G}$ be holomorphic  and $K : \mathbb{D}^n \times \mathbb{D}^n \rightarrow \text{M}(r,\C)$ be a kernel. If $K$ is quasi-invariant with respect to $J$ and $G = {\mob}^n$, then  $\mathcal{K}^{ij}(0)$ is nilpotent whenever $i \neq j$. Furthermore, suppose  $G$ is the group $\Aut(\mathbb{D}^n)$. Then 
$\mathcal{K}^{ij}(0)$  is nilpotent for $1\leq i,j \leq n$, $i\not = j$, and all the $\mathcal{K}^{ii}(0)$ are mutually similar for $1\leq i\leq n$.
\end{lem}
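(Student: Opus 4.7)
The plan is to specialize the transformation rule of Proposition~\ref{Transformation rule for curvature} to elements $g\in G$ stabilizing the origin. For $G=\mob^{n}$ the isotropy subgroup at $0$ is the $n$-torus of coordinate-wise rotations, while for $G=\Aut(\D^{n})$ it is enlarged by the permutation subgroup $S_{n}$; this extra symmetry is what will yield the mutual similarity of the diagonal blocks $\mathcal K^{ii}(0)$.

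For the nilpotency of the off-diagonal blocks, I would fix $i\neq j$ and, for $t\in\R$, take $g_{t}\in\mob^{n}$ to be the rotation $z\mapsto e^{it}z$ in the $i$-th factor and the identity in every other factor. Then $g_{t}(0)=0$ and $Dg_{t}(0)$ is diagonal with $e^{it}$ in position $i$ and $1$'s elsewhere. Reading off the $(i,j)$ block of the identity in Proposition~\ref{Transformation rule for curvature} at $\bz=0$ gives
\begin{equation*}
\mathcal K^{ij}(0)\;=\;e^{it}\,B(t)^{-1}\,\mathcal K^{ij}(0)\,B(t),\qquad B(t):=J(\tilde g_{t},0)^{*}.
\end{equation*}
Because $\tilde g_{t}$ is a one-parameter subgroup of $\tilde G$ fixing $0$, the cocycle identity for $J$ collapses at $0$ to the homomorphism property $J(\tilde g_{s+t},0)=J(\tilde g_{s},0)\,J(\tilde g_{t},0)$, so $B(\cdot)$ is a one-parameter subgroup of $\mathrm{GL}(r,\C)$ and hence $B(t)=\exp(tX)$ for some matrix $X$. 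Setting $M:=\mathcal K^{ij}(0)$ and rewriting the identity as $\Ad(B(t))M=e^{it}M$, differentiation at $t=0$ yields the commutator relation $[X,M]=iM$. A Leibniz induction then produces $[X,M^{k}]=ki\,M^{k}$ for every $k\geq 1$, so each nonzero power $M^{k}$ is an eigenvector of $\ad(X)$ for the eigenvalue $ki$. Since $\ad(X)$ has only finitely many eigenvalues on the finite-dimensional space of $r\times r$ matrices, some $M^{k}$ must vanish, and $\mathcal K^{ij}(0)$ is nilpotent.

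For the $\Aut(\D^{n})$ statement, the nilpotency of the off-diagonal blocks is inherited from the $\mob^{n}$ case via the inclusion $\mob^{n}\subset\Aut(\D^{n})$. To produce the similarity of the diagonal blocks, I would apply Proposition~\ref{Transformation rule for curvature} to a transposition $\sigma=(i\ j)\in S_{n}\subset\Aut(\D^{n})$: since $\sigma(0)=0$ and $D\sigma(0)$ is the corresponding permutation matrix $P_{\sigma}$, extracting the $(i,i)$ block of the resulting identity gives
\begin{equation*}
\mathcal K^{ii}(0)\;=\;\bigl(J(\tilde\sigma,0)^{*}\bigr)^{-1}\,\mathcal K^{jj}(0)\,J(\tilde\sigma,0)^{*},
\end{equation*}
exhibiting $\mathcal K^{ii}(0)$ and $\mathcal K^{jj}(0)$ as similar matrices.

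The step I expect to require the most care is the verification that $t\mapsto J(\tilde g_{t},0)$ really is a smooth one-parameter subgroup of $\mathrm{GL}(r,\C)$, together with bookkeeping the Kronecker-product indices correctly to extract the scalar factor $e^{it}$ in the $(i,j)$-block. Without that factor, the commutator identity $[X,M]=iM$ (and hence the nilpotency conclusion) would not be available.
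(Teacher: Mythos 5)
Your overall strategy is the same as the paper's: specialize the transformation rule of Proposition \ref{Transformation rule for curvature} to elements of the isotropy group of the origin, using coordinate rotations for the nilpotency of the off-diagonal blocks and a coordinate transposition for the similarity of the diagonal blocks. The identities you extract, namely $\mathcal K^{ij}(0)=e^{it}B(t)^{-1}\mathcal K^{ij}(0)B(t)$ with $B(t)=J(\tilde g_t,0)^*$, and $\mathcal K^{ii}(0)=\bigl(J(\tilde\sigma,0)^*\bigr)^{-1}\mathcal K^{jj}(0)J(\tilde\sigma,0)^*$, are correct and are exactly what the paper obtains (the paper uses the full torus $k=(k_1,\dots,k_n)$ and the swap $\sigma_k$ of the first two coordinates, but this is only cosmetic).

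The one step that does not go through under the stated hypotheses is the passage from the displayed identity to $B(t)=\exp(tX)$ and the commutator relation $[X,M]=iM$. The lemma assumes only that each $J_{\tilde g}$ is holomorphic in $\bz$ and that $K$ is quasi-invariant; it assumes neither the cocycle identity nor any continuity or measurability of $\tilde g\mapsto J(\tilde g,0)$. Without continuity, a homomorphism $\R\to\mathrm{GL}(r,\C)$ need not be of exponential form, so the differentiation at $t=0$ is not justified, and in fact the multiplicativity $J(\tilde g_{s+t},0)=J(\tilde g_s,0)J(\tilde g_t,0)$ itself is not among the hypotheses. Fortunately, none of this machinery is needed: the relation you already derived says that $M:=\mathcal K^{ij}(0)$ is similar to $e^{it}M$ for every $t\in\R$, hence the (finite) spectrum of $M$ satisfies $\sigma(M)=e^{it}\sigma(M)$ for all $t$, which forces $\sigma(M)=\{0\}$ and so $M$ is nilpotent. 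This is precisely the paper's conclusion of the argument, and replacing your $\mathrm{ad}$-eigenvector computation by this one line closes the gap. The $\Aut(\D^n)$ part of your argument is complete as written.
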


\begin{proof}
Since $K$ is quasi-invariant with respect to $J$, it follows from Lemma \ref{Transformation rule for curvature}  that
$$\mathcal{K}(\bz) = \left( Dg(\bz)^t \otimes (J(\tilde{g}, \bz)^*)^{-1}\right) \mathcal{K}(g(\bz)) \left( \overline{Dg(\bz)} \otimes J(\tilde{g}, \bz)^* \right)$$
for all $g$ in M\"{o}b$^n$ and $\bz$ in $\mathbb{D}^n$.
Let $k \in \mob^n$ be such that $k(z_1, z_2,\ldots, z_n) = (k_1z_1$, $k_2z_2$,\ldots, $k_nz_n)$ for $(z_1, z_2,\ldots, z_n)$ in $\mathbb{D}^n$ where modulus of each $k_i$ is $1$. Then $Dk(0) = \text{diag}(k_1, k_2,\ldots, k_n)$ is the diagonal matrix with diagonal entries $k_1,\hdots,k_n$. Now replacing $g$ by $k$ with $\tilde{k}\in\tilde{G}$ such that $p(\tilde{k})=k$ and $\bz$ by $0$ in the equation appearing above, we get
$$\mathcal{K}(0) = \left( Dk(0)^t \otimes (J(\tilde{k}, 0)^*)^{-1}\right) \mathcal{K}(0) \left( \overbar{Dk(0)} \otimes J(\tilde{k}, 0)^* \right)$$
which is equivalent to the equation:
$$\left( \overbar{Dk(0)} \otimes J(\tilde{k}, 0)^*\right)\mathcal{K}(0) =  \mathcal{K}(0) \left( \overbar{Dk(0)} \otimes J(\tilde{k}, 0)^* \right).$$
Now equating the $(i, j)$th block from both sides, we get 
$$\bar{k}_iJ(\tilde{k}, 0)^* \mathcal{K}^{ij}(0) = \bar{k}_j\mathcal{K}^{ij}(0)J(\tilde{k}, 0)^* .$$
Thus if $i\neq j,$ $ \mathcal{K}^{ij}(0)$ is similar to $ k_i \bar{k}_j\mathcal{K}^{ij}(0)$ for all $k_i,k_j$ in the unit circle. This means that $0$ is the only eigenvalue of $\mathcal{K}^{ij}(0)$, $i \neq j$ and therefore $\mathcal{K}^{ij}(0)$, $i \neq j$, is nilpotent. 

Now assume that $G = \Aut(\mathbb{D}^n)$. Since $K$ is quasi-invariant with respect to $J$, $K$ is also quasi-invariant with respect to $J|_{\widetilde{\mob}^n \times \mathbb{D}^n}$. It then follows from the first part that $\mathcal{K}^{ij}$ is nilpotent if $i \neq j$.

Let $\sigma_k \in \Aut (\mathbb{D}^n)$ be the automorphism such that $\sigma_k (z_1, z_2,\ldots, z_n) = (k_2z_2, k_1z_1,\ldots, k_nz_n)$ for $(z_1, z_2,\ldots, z_n)$ in $\mathbb{D}^n$ where each $k_i$ is in the unit circle. Then $$D\sigma_k (0) = \begin{pmatrix}
    0       & k_2 & 0 & \dots & 0 \\
    k_1       & 0 & 0 & \dots & 0 \\
    \vdots & \vdots & \vdots & \ddots & \vdots \\
    0       & 0 & 0 & \dots & k_n
\end{pmatrix}.$$
Replacing $g$ by $\sigma_k$ and $\bz$ by $0$ in Lemma \ref{Transformation rule for curvature}, we obtain  
$$\overbar{D\sigma_k (0)} \otimes J(\tilde{\sigma_k}, 0)^* \mathcal{K}(0) = \mathcal{K}(0) \overbar{D\sigma_k (0)} \otimes J(\tilde{\sigma_k}, 0)^*.$$
Equating the $(1,2)$ block from  both sides of the equation, we get 
$$J(\tilde{\sigma_k}, 0)^* \mathcal{K}^{22}(0) = \mathcal{K}^{11}(0) J(\tilde{\sigma_k}, 0)^* .$$
Since $J(\tilde{\sigma_k}, 0)^*$ is invertible, it follows that $\mathcal{K}^{11}(0)$ and $\mathcal{K}^{22}(0)$ are similar.
Similar reasoning shows that $\mathcal{K}^{ii}(0)$ and $\mathcal{K}^{i+1\,\,i+1}(0)$ are similar for all $i$.
\end{proof}

\section{Homogeneous tuples in $\mathrm B_1(\mathbb D^n)$}\label{Section 3}
In this section, we describe all homogeneous operators in $\mathrm B_1(\mathbb{D}^n)$ with respect to both the group $\mob^n$ and the full automorphism group $ \Aut(\mathbb D^n).$ 
\begin{thm}\label{rank one charaterization}
%Let $K : \mathbb{D}^n \times \mathbb{D}^n \rightarrow \mathbb{C}$ be a reproducing kernel.
Assume that the adjoint of the $n$-tuple of multiplication operators $(M_{z_1}, M_{z_2}, \ldots,$ $ M_{z_n}),$ defined on a reproducing kernel Hilbert space $H_K,$ is in $\mathrm B_1(\mathbb{D}^n).$  Then 

\noindent(a) the $n$ - tuple $(M_{z_1}, M_{z_2}, \ldots, M_{z_n})$  is homogeneous with respect to $\mob^n$ if and only if 
$$K(\bz,\bw) = h(\bz) \Big (\prod_{i = 1}^{n} \frac{1}{(1 - z_{i}\overline{w}_{i})^{\lambda_i}}\Big ) \overline{h(\bw)},\,\, \bz,\bw\in \mathbb D^n,\,\, \lambda_i> 0,$$  for some non-vanishing holomorphic function $h:\mathbb D^n \to \mathbb C;$

\noindent(b) the $n$ - tuple $(M_{z_1}, M_{z_2}, \ldots, M_{z_n})$  is homogeneous with respect to  $Aut(\mathbb{D}^n)$ if and only if 
$$K(\bz,\bw) = h(\bz) \Big ( \prod_{i = 1}^{n} \frac{1}{(1 - z_{i}\overline{w}_{i})^{\lambda}} \Big ) \overline{h(\bw)},\,\, \bz,\bw\in \mathbb D^n,\,\, 
\lambda> 0,$$ for some non-vanishing holomorphic function $h:\mathbb D^n \to \mathbb C.$
\end{thm}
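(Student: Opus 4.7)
The plan is to use the curvature transformation rule from Proposition \ref{Transformation rule for curvature} together with Lemma \ref{curvature at 0 is diagonal} and then reconstruct the kernel from its curvature. Sufficiency in both parts is a routine computation with an explicit cocycle: for $g \in \mob^n$ with components $g_i(z) = e^{i\theta_i}(z-a_i)/(1-\bar{a}_i z)$, the standard identity
\[
1 - g_i(z_i)\overline{g_i(w_i)} = \frac{(1-|a_i|^2)(1-z_i\bar{w}_i)}{(1-\bar{a}_i z_i)(1-a_i \bar{w}_i)}
\]
lets one exhibit a holomorphic cocycle $J$ on the universal cover making $K$ quasi-invariant; for part (b) each permutation $\sigma \in S_n$ is absorbed into $J$ via the factor $h(\bz)/h(\sigma \bz)$, which is consistent precisely because the common exponent $\lambda$ makes $\prod_i (1-z_i \bar{w}_i)^{-\lambda}$ symmetric under relabeling.

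For necessity in (a), the key observation is that since the rank is one, $J(\tilde g, \bz)$ is scalar-valued and the cocycle cancels out of the curvature formula; the transformation reduces to
\[
\mathcal{K}^{ij}(\bz) = g_i'(z_i)\,\overline{g_j'(z_j)}\,\mathcal{K}^{ij}(g(\bz)), \qquad g \in \mob^n.
\]
By Lemma \ref{curvature at 0 is diagonal}, $\mathcal{K}^{ij}(0)$ is nilpotent for $i \neq j$, which in the scalar rank-one setting forces $\mathcal{K}^{ij}(0) = 0$; transitivity of $\mob^n$ then upgrades this to $\mathcal{K}^{ij} \equiv 0$ for $i \neq j$. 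The diagonal equation $\mathcal{K}^{ii}(\bz) = |g_i'(z_i)|^2\,\mathcal{K}^{ii}(g\bz)$ separates into $n$ independent one-variable disc problems; combined with the positivity of the curvature of a positive-definite rank-one kernel and the known one-variable classification, this yields $\mathcal{K}^{ii}(\bz) = \lambda_i (1-|z_i|^2)^{-2}$ for some $\lambda_i > 0$.

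To recover the kernel, note that $\mathcal{K}^{ij}(\bz) = \partial_i \overline{\partial_j} \log K(\bz, \bz)$, so $\phi(\bz) := \log K(\bz,\bz) + \sum_i \lambda_i \log(1-|z_i|^2)$ is annihilated by every $\partial_i \overline{\partial_j}$ and is therefore pluriharmonic on the simply connected domain $\D^n$. Writing $\phi = 2\,\mathrm{Re}\,\psi$ for a holomorphic $\psi$ on $\D^n$ and setting $h = e^\psi$ gives $K(\bz,\bz) = |h(\bz)|^2 \prod_i (1-|z_i|^2)^{-\lambda_i}$; polarizing, via uniqueness of the sesqui-analytic extension from the diagonal, produces the stated formula for $K(\bz,\bw)$. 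For part (b), the second half of Lemma \ref{curvature at 0 is diagonal} forces the scalars $\mathcal{K}^{ii}(0)$ to be mutually similar under the coordinate-swap generators of $\Aut(\D^n)/\mob^n$, which in the scalar case simply means all $\lambda_i$ coincide.

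There is no substantial obstacle in this plan; the only delicate steps are the polarization from $K(\bz,\bz)$ to $K(\bz,\bw)$ and the appeal to simple connectivity of $\D^n$ to lift a pluriharmonic function to the real part of a globally defined holomorphic one, both of which are standard.
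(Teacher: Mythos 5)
Your proposal is correct and follows essentially the same route as the paper: explicit cocycle for sufficiency, then Lemma \ref{curvature at 0 is diagonal} to kill the off-diagonal curvature entries (nilpotent scalars vanish), the transformation rule at a point-stabilizing $g_{\bz}$ to pin down $\mathcal K^{ii}(\bz)=\lambda_i(1-|z_i|^2)^{-2}$, and recovery of $K$ up to the factor $h(\bz)\overline{h(\bw)}$. The only difference is that you spell out the pluriharmonic/polarization argument behind the statement ``$\mathcal K_1=\mathcal K_2$ iff $K_2=hK_1\bar h$,'' which the paper simply invokes as known, and you explicitly absorb the $S_n$-action into the cocycle in part (b), a detail the paper leaves implicit.
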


\begin{proof}
(a) It is well-known that the $n$ - tuple $(M_{z_1}^*, \ldots, M_{z_n}^*)$ 
on the Hilbert space $H_K$ with $K(\bz,\bw) = \prod_{i = 1}^{n}(1 - z_{i}\overline{w_{i}})^{-\lambda_i}$ is in $\mathrm B_1(\mathbb D^n).$ It is also easy to verify that this reproducing kernel is quasi-invariant \w $J_{\tilde{g}}(\bz)=\prod_{i=1}^n(g_i'(z_i))^{\frac{\l_i}{2}}$, for $\bz\in \D^n$ and $g=(g_1,\hdots,g_n)$ with $g_i(z)=e^{i\theta_i}\frac{z_i-a_i}{1-\ov{a}_iz_i}$, implying that the $n$ - tuple $(M_{z_1}^*, \ldots, M_{z_n}^*)$ is homogeneous with respect to M\"{o}b$^n.$ This is the proof in one direction. 

For the proof in the other direction, note that  the existence of a holomorphic map $J_{\tilde{g}},$ $\tilde{g} \in \widetilde{\mob}^n,$ with $p(\tilde{g})=g$ such that 
$$K(\bz, \bw) = J(\tilde{g}, \bz) K(g \bz,g \bw) \overline{J(\tilde{g}, \bw)},\,\,\bz, \bw \in \mathbb{D}^n,\,\,\tilde{g} \in \widetilde{\mob}^n$$ 
follows from \cite[Theorem 3.1]{OICHO}. 

Let $a_{ij}$ be the $i j$ - th entry of $\mathcal{K}(0)$. From Lemma \ref{curvature at 0 is diagonal}, it follows that $a_{ij} = 0$ if $i \neq j$. This shows that $\mathcal{K}(0) = \text{diag}(a_{11}, a_{22},\ldots,a_{nn})$.
Now Corollary \ref{Determintion of curvature knowing at 0} gives
$$\mathcal{K}(\bz) = Dg_{\bz} (\bz)^t \mathcal{K}(0) \overline{Dg_{\bz} (\bz)} = \text{diag} \left(\frac{a_{11}}{(1 - |z_1|^2)^2}, \frac{a_{22}}{(1 - |z_2 |^2)^2},\ldots, \frac{a_{nn}}{(1 - |z_n |^2)^2}\right),$$
where $g_{\bz}(\bw) = - (\frac{w_1-z_1}{1-\bar{z_1}w_1},\hdots,\frac{w_n-z_n}{1-\bar{z_n}w_n}),\,\,\bz,\bw \in \mathbb{D}^n.$
Let $ \lambda_i = a_{ii},$ $1\leq i \leq n$. Recalling that $\mathcal K_1 = \mathcal K_2$ if and only if $K_2 = h K_1 \bar{h}$ for some non-vanishing holomorphic function $h,$ we conclude 
that $$K(\bz, \bw) = h(\bz) \left( \prod_{i = 1}^{n} \frac{1}{(1 - z_{i}\overline{w_{i}})^{\lambda_i}}\right) \overline{h(\bw)},$$ $h$ non-vanishing holomorphic on $\mathbb D^n.$ Since $K$ is a positive definite kernel, it follows that $\lambda_i > 0,$ $1 \leq i \leq n.$

(b) The proof in the forward direction follows from the proof in the same direction of part (a). For the other direction, note that $K$ is quasi-invariant \w  $\aut(\mathbb{D}^n)$. So Proposition \ref{Transformation rule for curvature} yields  
\begin{equation}\label{eqn:5.1.x}
\mathcal{K}(\bz) = Dg(\bz)^t  \mathcal{K}(g(\bz))  \overline{Dg(\bz)},\,\,\bz \in \mathbb{D}^n\,\,g \in \aut(\mathbb{D}^n).
\end{equation}

Since $\mob^n$ is a subgroup of $\aut(\mathbb{D}^n)$, it follows that $(M_{z_1}, M_{z_2}, \ldots, M_{z_n})$ is homogeneous with respect to the group $\mob^n$.
%$K_{\varphi}$ and $K$ are equivalent for all $\varphi$ in M\"{o}b$^n$. 
Therefore, $\mathcal{K}(0) = \text{diag}(a_{11}, a_{22},\ldots,a_{nn})$ where $a_{ii} > 0,$ $1 \leq i \leq n$.

Now applying Lemma \ref{curvature at 0 is diagonal}, we obtain  $a_{ii} = a_{11}$ for all $i$. Putting $\lambda = a_{11},$  we have $\mathcal{K}(0) = \text{diag}(\lambda, \lambda,\ldots, \lambda)$.
Now Corollary \ref{Determintion of curvature knowing at 0} gives 
$$\mathcal{K}(\bz) = Dg_{\bz} (\bz)^t \mathcal{K}(0) \overline{Dg_{\bz} (\bz)} = \text{diag} \left(\frac{\lambda}{(1 - |z_1|^2)^2}, \frac{\lambda}{(1 - |z_2 |^2)^2},\ldots, \frac{\lambda}{(1 - |z_n |^2)^2} \right),\,\,z \in \mathbb{D}^n,$$
where $g_{\bz}(\bw) = - (\frac{w_1-z_1}{1-\bar{z_1}w_1},\hdots,\frac{w_n-z_n}{1-\bar{z_n}w_n}),\,\,\bz,\bw \in \mathbb{D}^n.$ This implies that $$K(\bz, \bw) = h(\bz) \left( \prod_{i = 1}^{n} \frac{1}{(1 - z_{i}\overline{w_{i}})^{\lambda}} \right) \overline{h(\bw)}$$ for some  holomorphic function $h$ on $\mathbb D^n.$ 
\end{proof}

\section{Homogeneous vector bundles}\label{Section 4}

In this section, we study {\h}s over $\D^n$ homogeneous \w some closed subgroup of the group of bi-holomorphic automorphisms of $\D^n$. We begin with a commuting tuple which is homogeneous with respect to the group $\mob^n.$ 

%\st{Note that $\mob^n$ can be identified with the group $\text{GL}(2,\C)^n$ implying that it is a complex Lie group.} \uwam{This is not correct. Do you need to say this here?}
%either the automorphism group $\Aut(\mathbb{D}^{n})$ of the polydisc $\mathbb D^n$ or 

%\begin{defn}\label{Def of homogeneous tuple}
%A commuting tuple of operators $(T_1, T_2,\ldots ,T_n)$ is said to be homogeneous with respect to the group $G$, if  the joint spectrum of $(T_1, T_2,\ldots , T_n)$ lies in $\overline{\mathbb{D}}^n$ and $\varphi(T_1, T_2,\ldots , T_n)$ is unitary equivalent with $(T_1, T_2,\ldots ,T_n)$ for all $\varphi \in G$.
%\end{defn}

Let $\T=(T_1,\hdots,T_n)\in \mathrm B_r(\D^n)$. It follows that $\T$ is unitarily equivalent to $g(\T)$ if and only if $g$ lifts to a bundle automorphism of the {\h} $E$ associated to $\T$ for $g\in\mob^n$. Further, we recall from \cite[Theorem 6.1]{HOHS} that the universal covering group $\widetilde{\mob}^n$ of $\mob$ acts on $E$ as described in the theorem below.

\begin{thm}\label{grpactvec}
Let $\T=(T_1,\hdots,T_n)$ be a tuple of commuting operators in $\mathrm B_r(\D^n)$ homogeneous \w the group $\mob^n$. Then the universal covering group $\widetilde{\mob}^n$ of $\mob^n$ acts on the {\h} $E$ associated to $\T$.
\end{thm}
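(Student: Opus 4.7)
The plan is to invoke and adapt \cite[Theorem 6.1]{HOHS}, which establishes the analogous statement for $n=1$. Since $\mob^n$ is the direct product of $n$ copies of $\mob$, and since the defining intertwining relations decouple across the coordinates, only cosmetic changes to the single-variable argument are required.

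From the hypothesis of homogeneity I would first extract, for every $g=(g_1,\ldots,g_n)\in \mob^n$, a unitary $U_g$ on $\H$ implementing the unitary equivalence, so that $U_g T_i U_g^* = g_i(T_i)$ for every $i$. Following the scheme of \cite{OICHO} and of \cite[Section 6]{HOHS}, I would argue that the assignment $g \mapsto U_g$ can be chosen so that $g \mapsto [U_g]$ is a continuous homomorphism from $\mob^n$ into the projective unitary group $\mathrm{PU}(\H)$.

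Since $\widetilde{\mob}^n$ is the simply connected cover of $\mob^n$ and the continuous second cohomology of $\mathfrak{sl}(2,\R)^n$ vanishes, Bargmann's theorem then lifts $g \mapsto [U_g]$ to a strongly continuous unitary representation $\rho : \widetilde{\mob}^n \to \mathcal U(\H)$. For $\tilde g \in \widetilde{\mob}^n$ covering $g \in \mob^n$, the unitary $\rho(\tilde g)$ continues to satisfy $\rho(\tilde g)\,T_i\,\rho(\tilde g)^* = g_i(T_i)$ for each $i$. From this intertwining it follows that $\rho(\tilde g)$ sends $\ker D_{\T - g(\bw)I}$ isomorphically onto $\ker D_{\T - \bw I}$ for every $\bw \in \D^n$. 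Restricting $\rho(\tilde g)$ to fibres therefore produces a fibrewise isometric holomorphic map $\hat{\tilde g} : E \to E$ covering the diffeomorphism induced by $g$ on $\D^n$, and the relation $\rho(\tilde g \tilde h) = \rho(\tilde g)\rho(\tilde h)$ yields $\widehat{\tilde g \tilde h} = \hat{\tilde g}\circ \hat{\tilde h}$, giving the desired $\widetilde{\mob}^n$-action by hermitian holomorphic bundle automorphisms.

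The main obstacle is the regularity of the choice $g \mapsto U_g$, which is required in order to apply Bargmann's lift: a priori $U_g$ is determined only modulo the commutant of $\T$. In the single-variable case of \cite{HOHS} this is handled by a Borel selection argument of Mackey--Moore type, and the same argument applies here since the commutant of the $n$-tuple $\T$ in $\mathrm B_r(\D^n)$ is no worse behaved than that of a single Cowen--Douglas operator. Passing to the universal cover is essential at precisely this point: the residual projective ambiguity in $g \mapsto U_g$ need not descend to an honest unitary representation of $\mob^n$, but always lifts to one of its simply connected cover $\widetilde{\mob}^n$.
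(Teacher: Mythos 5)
Your proposal is correct and follows essentially the same route as the paper, whose entire ``proof'' of this statement is the citation of \cite[Theorem 6.1]{HOHS} together with the observation that the product structure of $\mob^n$ reduces everything to the one-variable case; the Bargmann lift of the projective representation $g\mapsto [U_g]$ that you spell out is exactly the mechanism behind that cited theorem. The one point to watch is that viewing $g\mapsto [U_g]$ as a homomorphism into $\mathrm{PU}(\H)$ presupposes that $U_{gh}^{-1}U_gU_h$ is a scalar, i.e.\ that the commutant of $\T$ is trivial; for reducible $\T$ the ambiguity lives in the full unitary group of the commutant rather than in the circle, which is precisely the extra care taken in \cite{HOHS} (and is harmless for the applications in this paper, where the theorem is used for irreducible tuples).
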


On the other hand, we point out from \cite[Theorem 3.1]{OICHO} that $\T=(T_1,\hdots,T_n)\in \mathrm B_r(\D^n)$ is homogeneous \w $\mob^n$ if and only if the tuple of multiplication operators $\M=(M_{z_1},\hdots,M_{z_n})$ on $\H_K$ is homogeneous \w $\mob^n$ which is again equivalent to the fact that $K$ satisfies the following equation 
\beq\label{qinv}
K(\bz,\bw) &=& J(\tilde{g},\bz)K(g(\bz),g(\bw))J(\tilde{g},w)^*,~~~\bz,\bw\in\D^n,~\tilde{g}\in\widetilde{\mob}^n,
\eeq
where $g=p(\tilde{g})$, $p:\widetilde{\mob}^n\ra \mob^n$ is the universal covering map and, for $\tilde{g}\in\widetilde{\mob}^n$, $J({\tilde{g}},\cdot):\D^n\ra \text{GL}(r,\C)$ is a holomorphic mapping. Moreover, it is observed from the discussion made in \cite[Section 1.4]{ACHOCD} that for a homogeneous $n$ - tuple of operators $\T=(T_1,\hdots,T_n)$, the mapping $J({\tilde{g}},\cdot)$ is obtained as follows:
\beq\label{cocycle}
J({\tilde{g}},\bz) &=& \phi_{\bz}\circ \tilde{g}_{\bz}^{-1}\circ \phi_{g(\bz)}^{-1}
\eeq
where $\phi$ is global trivialization of the bundle $E\ra \D^n$, $\phi_{\bz}:E_{\bz}\ra\C^r$, $\phi_{g(\bz)}:E_{g(\bz)}\ra\C^r$ and $\tilde{g}_{\bz}:E_{\bz}\ra E_{g(\bz)}$ are all linear isomorphisms. We observe, for $\tilde{g},\tilde{h}\in\widetilde{\mob}^n$ with $g=p(\tilde{g})$, $h=p(\tilde{h})$ and $\bz\in\D^n$, that 
$$J(\tilde{h}\tilde{g},\bz)=J(\tilde{g},\bz)J(\tilde{h},g(\bz))$$ which implies that $J:\widetilde{\mob}^n\times\D^n\ra\text{GL}(r,\C)$ is a cocycle. Note that any other choice of trivialization of $E$ yields an equivalent cocycle.

Let $g_{\bz}$ be an element in $\mob^n$ which maps $\bz$ to $0$, that is, $g_{\bz}(\bz)=0$ and $\tilde{g}_{\bz}\in\widetilde{\mob}^n$ be such that $p(\tilde{g}_{\bz})=g_{\bz}$. For a quasi-invariant kernel $K$, we have that
\beq\label{cocyclevsker}
K(\bz,\bz) &=& J({\tilde{g}_{\bz}},\bz)K(0,0)J({\tilde{g}_{\bz}},\bz)^*.
\eeq
Thus it shows that $K(\bz,\bz)$ is uniquely determined by $K(0,0)$ provided $J:\widetilde{\mob}^n\times \D^n\ra \text{GL}(r,\C)$ is known. Let $\K$ be the subgroup of $\mob^n$ consisting of elements which fix the origin and $\tilde{\K}\subset \widetilde{\mob}^n$ be the subgroup such that $p(\tilde{\K})=\K$. Then, for any $\tilde{k}\in\tilde{\K}$, we have that 
\beq\label{K(0,0)}
K(0,0)= J({\tilde{k}},0)K(0,0)J({\tilde{k}},0)^*.
\eeq
In other words, the inner product $\<K(0,0)\cdot,\cdot\>$ is invariant under $J({\tilde{k}},0)$ for every $\tilde{k}\in\tilde{\K}$. Thus any positive definite matrix $K(0,0)$ defines, via \eqref{cocyclevsker}, a hermitian structure on the homogeneous vector bundle $E\ra\D^n$ determined by $J({\tilde{g}},\bz)$, $\bz\in\D^n$, $\tilde{g}\in\widetilde{\mob}^n$. Therefore, finding all homogeneous operators in $\mathrm B_r(\D^n)$ amounts to
\begin{itemize}
\item[(i)] obtaining a classification of all homogeneous {\h}s over $\D^n$ by determining all the cocycles $J:\widetilde{\mob}^n\times\D^n\ra\text{GL}(r,\C)$ holomorphic in $z$;
\item[(ii)] determining among these which homogeneous bundles ``correspond'' to a holomorphic curve in the Grassmannian of rank $r$ of some Hilbert space $\mathcal H$ possessing a reproducing kernel $K$
\item[(iii)] and finally showing that the adjoint of the multiplication by the coordinate functions on the Hilbert space is bounded and is in the class $\mathrm B_r(\D^n)$.
\end{itemize}
The bulk of the work in this paper goes into settling item (i) of the list we have just made up. This involves understanding the representations of the group $\widetilde{\mob}^n$ given the detailed knowledge of $\widetilde{\mob}$. To settle item (ii) of this list,  we have to find all $K(0,0)$ satisfying \eqref{K(0,0)} such that the polarization of the equation \eqref{cocyclevsker} yields a positive definite kernel $K$. Starting from the  positive definite kernel $K$,  there is a canonical construction producing a Hilbert space $\mathcal H_K$ for which $K$ serves as the reproducing kernel, (see \cite[Theorem 2.14]{AITRKHS}). To settle (iii), one has to determine if the commuting $n$ - tuple of multiplication by the coordinate functions on the Hilbert space $\mathcal H_K$ are bounded and when they are, if the adjoint of this $d$ - tuple of operators belongs to  $\mathrm B_r(\D^n)$. Although this construction of the cocycle associated to a homogeneous vector bundle is known in literature (cf. \cite[Section 1.5, Section 2]{ACHOCD}), a sketch of it has been presented here for the sake of completeness.
%\vspace{0.1in}

Recall that  $\text{SU}(1, 1)$ is the $2$ - fold covering group of $\mob$ and let $\widetilde{\mob}$ be the universal covering group of $\text{SU}(1, 1).$  The unit disc $\mathbb{D}$ admits an action of the group $\text{SU}(1, 1)$ by the rule, 
$$g(z) = \frac{az + b}{\bar{b}z + \bar{a}}, \,\,g = \left(\begin{matrix}
    a & b\\
    \bar{b} & \bar{a}
\end{matrix}\right) \in \text{SU}(1, 1), z \in \mathbb{D}.$$ Therefore, the group $G:=\text{SU}(1,1)^n$ acts on $\mathbb{D}^n.$ Let $\tilde{G}$ be the universal covering group of  $G$. Evidently, composing with the covering map, an action of $\tilde{G}$ on $\mathbb{D}^n$ is obtained. Thus, $\D^n\cong \tilde{G}/\tilde{\K}\cong G/\K$. We now describe the construction of all cocycles giving rise to an irreducible {\h} over $\D^n$ which is homogeneous \w the group $\tilde{G}$. 

In the discussion below, we follow the notation of \cite{ACHOCD}. The Lie algebra $\mathfrak{su}(1,1)$ of $\text{SU}(1,1)$ is spanned by
$$X_1 = \frac{1}{2}\begin{pmatrix}
0 & 1\\
1 & 0
\end{pmatrix},\, X_0 = \frac{1}{2}\begin{pmatrix}
i & 0\\
0 & -i
\end{pmatrix}\,\,\mbox{and}\,\,Y = \frac{1}{2}\begin{pmatrix}
0 & i\\
-i & 0
\end{pmatrix}.$$ Then the subalgebra corresponding to the rotation subgroup of SU$(1,1)$ is spanned by $X_0$.

Let $\mathfrak{su}(1,1)^{\mathbb{C}}$ be the complexification of $\mathfrak{su}(1,1)$. Then $\mathfrak{su}(1,1)^{\mathbb{C}}$ is the Lie algebra of the complexification of the group SU$(1, 1)$ which is SL$(2, \mathbb{C})$. The Lie algebra $\mathfrak{su}(1,1)^{\mathbb{C}}$ is spanned by 
$$h = -iX_0 = \frac{1}{2}\begin{pmatrix}
1 & 0\\
0 & -1
\end{pmatrix},\, x = X_1 + iY = \begin{pmatrix}
0 & 1\\
0 & 0
\end{pmatrix}\,\,\mbox{and}\,\,y = X_1 - iY = \begin{pmatrix}
0 & 0\\
1 & 0
\end{pmatrix}.$$
Let $K^{\mathbb{C}} = \left\lbrace\left( \begin{smallmatrix}
z & 0\\
0 & z^{-1}
\end{smallmatrix}\right) : z \in \mathbb{C} \setminus \{0\} \right\rbrace$, $P^+ = \left\lbrace \left(\begin{smallmatrix}
1 & z\\
0 & 1
\end{smallmatrix}\right) : z \in \mathbb{C} \right\rbrace$ and $P^- = \left\lbrace \left(\begin{smallmatrix}
1 & 0\\
z & 1
\end{smallmatrix}\right) : z \in \mathbb{C} \right\rbrace$ be the subgroups of SL$(2,\C)$. The Lie algebras of $K^{\mathbb{C}}$, $P^+$ and $P^-$ are $\mathfrak{t}^{\mathbb{C}} = \left\lbrace \left(\begin{smallmatrix}
c & 0\\
0 & -c
\end{smallmatrix}\right) : c \in \mathbb{C} \right\rbrace$, $\mathfrak{p}^{+} = \left\lbrace \left(\begin{smallmatrix}
0 & c\\
0 & 0
\end{smallmatrix}\right) : c \in \mathbb{C} \right\rbrace$ and $\mathfrak{p}^{-} = \left\lbrace \left(\begin{smallmatrix}
0 & 0\\
c & 0
\end{smallmatrix}\right) : c \in \mathbb{C} \right\rbrace$, respectively. Let $\mathfrak{b}$ denote the Lie algebra spanned by $\{h, y\}$. Then $\mathfrak{b}$ is the Lie algebra for the group $K^{\mathbb{C}}P^-$ which is a closed subgroup of SL$(2, \mathbb{C})$. It follows that the Lie algebra $\mathfrak{g}$ of $G$ (which is also the Lie algebra of $\tilde{G}$) is the direct sum of $n$ copies of $\mathfrak{su}(1,1)$ and Lie algebra $\mathfrak{k}$ of the subgroup $\K$ is the direct sum of $n$ copies of the sub-algebra spanned by $X_0$. Let $\mathfrak{g}^{\C}$ and $\mathfrak{k}^{\C}$ be the complexified Lie algebras. We also let $G^{\C}$ be complex Lie group corresponding to the Lie algebra $\mathfrak{g}^{\C}$ and $\B$ be the complex Lie subgroup $(K^{\C}P^{-})^n$ corresponding to the sub-algebra $\mathfrak{b}^n := \mathfrak{b}\oplus\cdots\oplus\mathfrak{b}$, $n$ times. Note that $G^{\C}$ is simply connected. We observe that 
\beq\label{incl}
\D^n\cong\tilde{G}/\tilde{\K}\hookrightarrow G^{\C}/\B\cong (\S^2)^n
\eeq as an open subset. Moreover, $G^{\C}\ra G^{\C}/\B$ is a holomorphic principal $\B$ - bundle over $(\S^2)^n$.

\begin{thm}\label{construction of cocycle}
Let $E$ be an irreducible {\h} over $\D^n$ which is homogeneous \w the group $\tilde{G}$. Then the cocycle associated to $E\ra\D^n$ takes the form 
\beq\label{cocyclecomp2}
J(\tilde{g},\bz) = \prod_{i=1}^n(g_i'(z_i))^{\a_i} \rho^0(s(\bz)^{-1}g^{-1}s(g(\bz))),
\eeq
where $s(\bz)$ is a local holomorphic section of the principal $\B$ - bundle $G^{\C}\ra(\S)^n$ over the open set $\D^n\hookrightarrow(\S^2)^n$, $\rho^0$ is a representation of the Lie group $\B$, $(g_1,\hdots,g_n)=g=p(\tilde{g})$ and $p:\tilde{G}\ra G$ is the universal covering mapping.
\end{thm}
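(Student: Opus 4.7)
The plan is to combine the standard correspondence between $\tilde{G}$-homogeneous holomorphic vector bundles on $\D^n$ and holomorphic representations of the isotropy with the embedding $\D^n\cong \tilde{G}/\tilde{\K}\hookrightarrow G^{\C}/\B$ recorded in \eqref{incl}. The bundle $E$ is recovered from the representation of $\tilde\K$ on the fiber $V=E_0$ as $\tilde G\times_{\tilde\K}V$, and holomorphicity together with the open inclusion $\tilde G/\tilde\K\hookrightarrow G^{\C}/\B$ promotes the $\tilde\K$-representation to an indecomposable holomorphic representation $\sigma$ of an appropriate parabolic cover $\tilde\B$ of $\B$: the infinitesimal action of $\mathfrak p^-$ on $E$ is already encoded in the $\bar\partial$-operator, so combining it with the $\tilde\K$-action yields a compatible action of the entire cover of $\B$.

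Next I would peel off the ``non-$\B$'' part of $\sigma$. The covering $\tilde\B\to\B$ is central with kernel isomorphic to $\ker(\tilde\K\to\K)\cong\Z^n$, and the standard argument for indecomposable representations of central extensions (equivalently, Schur's lemma on the center) forces this $\Z^n$ to act on $V$ by a character. The character extends uniquely to a character $\chi_{\boldsymbol\alpha}=\prod_{i=1}^n\chi_{\alpha_i}$ of the connected abelian group $\tilde\K$ (with $\alpha_i\in\R$), which I extend trivially across the unipotent radical $(P^-)^n$ to a character of $\tilde\B$. The twist $\sigma\otimes\chi_{\boldsymbol\alpha}^{-1}$ is then trivial on $\ker(\tilde\B\to\B)$ and descends to a holomorphic representation $\rho^0\colon\B\to\mathrm{GL}(V)$.

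Finally the cocycle is assembled from these two pieces. For the $\B$-representation $\rho^0$, the associated homogeneous bundle on $G^{\C}/\B$ carries the classical cocycle $\rho^0(s(\bz)^{-1}g^{-1}s(g(\bz)))$ computed in any local holomorphic section $s$ of the principal $\B$-bundle $G^{\C}\to G^{\C}/\B$; pulling back along $p\colon\tilde G\to G$ and restricting to $\D^n$ gives exactly the second factor of the stated formula. For the character $\chi_{\alpha_i}$, a direct one-variable computation (as in the disc case treated in \cite{ACHOCD}) identifies its contribution with $(g_i'(z_i))^{\alpha_i}$, since $(g_i'(z_i))^{1/2}=(\bar{b}_i z_i+\bar{a}_i)^{-1}$ is the generator of the characters of the cover of $K_i^{\C}$; the $n$-fold tensor product produces the prefactor $\prod_{i=1}^n(g_i'(z_i))^{\alpha_i}$. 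The main technical point is the extension step in the first paragraph---producing a holomorphic $\tilde\B$-representation from the bare $\tilde\K$-action on $V$---as the tensor product shape of the cocycle relies crucially on this extension existing and being indecomposable, which is exactly where the holomorphic (rather than merely smooth) structure of $E$ is used.
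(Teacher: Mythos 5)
Your proof follows essentially the same route as the paper: extend the isotropy representation to the parabolic (the paper invokes Tirao--Wolf for your $\bar\partial$-operator step), split off a one-dimensional character $\chi_{\boldsymbol\alpha}$ so that the remainder descends to an honest holomorphic representation $\rho^0$ of $\B$, and multiply the two resulting cocycles; your central-extension bookkeeping is just the group-level version of the paper's eigenvalue shift $\rho^0(h_i)=\rho(h_i)+\alpha_i I$. Two small points to tighten: for an \emph{indecomposable} (rather than irreducible) representation a central element a priori acts only by scalar-plus-nilpotent, so you need the diagonalizability supplied by the $\tilde{\K}$-invariant metric to conclude that $\mathbb{Z}^n$ acts by a character, and the extension of that character to $\tilde{\K}\cong\mathbb{R}^n$ is not unique (the $\alpha_i$ are determined only modulo $\mathbb{Z}$), although any choice yields the stated form of $J$.
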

%Let $E$ be an irreducible {\h} over $\D^n$ which is homogeneous \w the group $\tilde{G}$. Since $\D^n\cong \tilde{G}/\tilde{\K}$, the smooth vector bundle $E\ra\D^n$ possesses both a complex and  hermitian structure. 
\begin{proof}
Let $E(\tilde{G},\rho)\ra\D^n$ denote the smooth vector bundle obtained from the principal $\tilde{\K}$ - bundle $\tilde{G}\ra\tilde{G}/\tilde{\K}$ and the representation $\rho:\tilde{\K} \to \text{GL}(r, \mathbb C)$ of the closed subgroup $\tilde{\K}$. Since the vector bundle $E$ is homogeneous \w $\tilde{G}$, it can be shown that there exists a $r$ dimensional representation $\rho$ of $\tilde{\K}$ such that $E$ is of the form $E(\tilde{G},\rho)$ equipped with an isometric action of $\tilde{G}$. Moreover, since the hermitian structure on $E\ra\D^n$ is $\tilde{G}$ - invariant, there is a $\rho(\tilde{\K})$ - invariant inner product on the representation space $\C^r$.

Since the representation space $\C^r$ admits a $\rho(\tilde{\K})$ - invariant inner product and since $\tilde{\K}$ is an abelian group, for all $\tilde{k}\in\tilde{\K}$, $\rho(\tilde{k})$ are simultaneously diagonalizable. This representation $\rho$ induces a representation of the Lie algebra $\mathfrak{k}$ which can be extended to obtain a representation of $\mathfrak{k}^{\C}$. Let us also denote this new representation by the same letter $\rho$. It follows that $\rho(h_i)$ is a diagonal matrix \w some suitable basis of $\C^r$ where $\{h_i=(0,\hdots,h,\hdots,0),y_i=(0,\hdots,y,\hdots,0)\}$ is the basis of $i$-th subalgebra $\mathfrak{b}_i$ of $\mathfrak{b}^n$. Furthermore, since $E\ra \D^n$ is a holomorphic vector bundle it follows from \cite[Theorem 3.6]{HHVB} that $\rho$ can be extended to obtain a representation of the Lie subalgebra $\mathfrak{b}^n$. By a slight abuse of notation, we denote this new representation by the same letter $\rho$.

Since the  {\h} $E\ra\D^n$ is irreducible by assumption, it follows that the representation $\rho$ is indecomposable. Consequently, the identities $[\rho(h_i),\rho(h_j)]=[\rho(h_i),\rho(y_j)]=[\rho(y_i),\rho(y_j)]=0$ for $i\neq j$, $i,j=1,\hdots,n$, force each $\rho(h_i)$ to have an uninterrupted string of eigenvalues  $-\a_i,\hdots,-(\a_i+k_i-1)$ with appropriate multiplicities where $-\a_i$ and $-(\a_i+k_i-1)$ are the largest and the smallest eigenvalues of $\rho(h_i)$, respectively, for some positive integers $k_i$ and $1\leq i\leq n$. However, the case $n = 2$ which we specifically need, is recorded in Lemma \ref{properties of indecomposable rep} (ii). Note that $\rho$ can be written as the tensor product of the one dimensional representation $\si$ given by $\si(h_i)=\a_i$ and $\si(y_i)=0$,  and the representation $\rho^0$ given by $\rho^0(h_i)=\rho(h_i)+\a_iI$, $\rho^0(y_i)=\rho(y_i)$, $1\leq i\leq n$, where $y_i=(0,\hdots,y,\hdots,0)$ with $y$ in the $i$-th position of the tuple. 
%\uwam{Problem with the notation $y_i$, $y$? What is $y$, what is $\alpha$? They have been specified before?}

Note that $\rho^0$ gives rise to a holomorphic representation, %\uwam{what is descend, holomorphic?} 
 to be denoted by the same letter $\rho^0$, of $\mathbb{B}$ whose derivative at identity is the representation $\rho^0$. Indeed, writing each element $\left( \begin{smallmatrix}
a & 0\\
b & a^{-1}
\end{smallmatrix}\right)\in K^{\C}P^-$ with $a\in\C\setminus\{0\}$ and $b\in\C$ as $\exp(ba^{-1}y+2(\log a)h)$, the map 
\begin{equation}\label{eqn xx}
\begin{pmatrix}
a & 0\\
b & a^{-1}
\end{pmatrix}\mapsto \exp\left(\frac{b}{a}\rho^0(y)\right)\exp(2(\log a)\rho^0(h))
\end{equation}
defines a representation of $K^{\C}P^{-}$. We then extend this representation component-wise to obtain a representation of $\mathbb{B}$. Thus $\rho^0$ together with the holomorphic principal $\B$ - bundle $G^{\C}\ra (\S^2)^n$ give rise to the holomorphic vector bundle $E(G^{\C},\rho^0)\ra (\S^2)^n$ which is homogeneous \w the group $G^{\C}$. On the other hand, the one dimensional representation $\si$ also gives rise to a holomorphic line bundle $L(G^{\C},\si)\ra(\S^2)^n$. Moreover, the holomorphic vector bundle $E\ra\D^n$ is isomorphic to the holomorphic bundle $E(G^{\C},\si)|_{\D^n}\otimes E(G^{\C},\rho^0)|_{\D^n}\ra \D^n$ obtained by pulling back the bundle $E(G^{\C},\si)\otimes E(G^{\C},\rho^0)\ra (\S^2)^n$ via the inclusion $\D^n\hookrightarrow (\S^2)^n$ mentioned in \eqref{incl}.

From the discussion in the preceding paragraphs, it is enough to compute the cocycle corresponding to the holomorphic homogeneous vector bundle $E(G^{\C},\rho^0)|_{\D^n}\ra \D^n$. For this, we take a local holomorphic section $s(\bz)$ of $G^{\C}\ra (\S^2)^n$ over the open set $\D^n\hookrightarrow(\S^2)^n$ which induces a trivialization of the vector bundle $E(G^{\C},\rho^0)|_{\D^n}\ra \D^n$. Then applying \eqref{cocycle} for this trivialization we have that the cocycle corresponding to the holomorphic homogeneous vector bundle $E(G^{\C},\rho^0)|_{\D^n}\ra \D^n$ is 
\beq\label{cocyclecomp1}
J^0(\tilde{g},\bz) &=& \rho^0(s(\bz)^{-1}g^{-1}s(g(\bz)))
\eeq
where $g=p(\tilde{g})$, $p:\tilde{G}\ra \text{SU}(1,1)^n$ is the universal covering map. Thus the proof follows from the fact that the cocycle corresponding to the line bundle $E(G^{\C},\si)|_{\D^n}\ra \D^n$ is $\prod_{i=1}^n(g_i'(z_i))^{\a_i}$. \end{proof}

Now in the following corollary, an explicit formula for the cocycle $J$ with respect to a suitable local section of $G^{\C}\ra (\S^2)^n$ is provided where it is observed that we actually need an irreducible representation of the Lie algebra $\mathfrak{b}$ of $\B$ to construct $J$ in our case. 

\begin{cor}
Let  $g = \left(\!\!\left(\begin{smallmatrix}
a_i & b_i\\
c_i & d_i
\end{smallmatrix} \right)\!\!\right)_{i=1}^{n} \in \text{SU}(1,1)^n$ and take the local holomorphic section $s(\bz)$ of $G^{\C}\ra (\S^2)^n$ over the open set $\D^n\hookrightarrow(\S^2)^n$ defined by $s(\bz)=\left(\!\left(\begin{smallmatrix}1 & z_1\\ 0 & 1\end{smallmatrix}\right),\left(\begin{smallmatrix}1 & z_2\\ 0 & 1\end{smallmatrix}\right),\hdots,\left(\begin{smallmatrix}1 & z_n\\ 0 & 1\end{smallmatrix}\right)\!\right)$. Then the cocycle $J$ associated to the holomorphic homogeneous vector bundle $E\ra \D^n$ turns out to be 
\beq\label{cocyclecomp3}
\nonumber J(\tilde{g},\bz) &=&\Big ( \prod_{i=1}^n(g_i'(z_i))^{\a_i}\Big )\exp\left(\!\!\left(\!\frac{-c_1}{c_1z_1 + d_1}\!\right) \rho^0(y_1)\!\!\right)\, \exp\left(\left(2 \log(c_1z_1 + d_1)\right)\rho^0(h_1)\right)\times\cdots\\
&\times &  \exp\left(\!\!\left(\!\frac{-c_n}{c_nz_n + d_n}\!\right) \rho^0(y_n)\!\!\right)\,\exp\left(\left(2 \log(c_nz_n + d_n)\right)\rho^0(h_n) \right).
\eeq

%\begin{equation}\label{eqn: sec}
%s(\bz) = \left(\!\! \begin{pmatrix}
%1 & z_1\\
%0 & 1
%\end{pmatrix}, \begin{pmatrix}
%1 & z_2\\
%0 & 1
%\end{pmatrix},\ldots, \begin{pmatrix}
%1 & z_n\\
%0 & 1
%\end{pmatrix}\!\!\right).
%\end{equation}
\end{cor}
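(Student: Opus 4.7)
My plan is to start from the formula \eqref{cocyclecomp2} in Theorem \ref{construction of cocycle}, namely
\[
J(\tilde{g},\bz) = \Big(\prod_{i=1}^{n}(g_i'(z_i))^{\a_i}\Big)\,\rho^0\bigl(s(\bz)^{-1}g^{-1}s(g(\bz))\bigr),
\]
and compute the argument $s(\bz)^{-1}g^{-1}s(g(\bz))\in\mathbb{B}=(K^{\C}P^{-})^n$ for the prescribed section $s(\bz)$. Since everything factorises across the $n$ copies of $\mathrm{SU}(1,1)$, the computation reduces to a one-variable calculation which I would then apply componentwise.

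First, I would fix one factor: write $g_i=\left(\begin{smallmatrix} a_i & b_i\\ c_i & d_i\end{smallmatrix}\right)\in\mathrm{SU}(1,1)$, so $g_i^{-1}=\left(\begin{smallmatrix} d_i & -b_i\\ -c_i & a_i\end{smallmatrix}\right)$ (using $\det g_i=1$), and $g_i(z_i)=\frac{a_iz_i+b_i}{c_iz_i+d_i}$. Multiplying the three upper-triangular / SU$(1,1)$ factors and simplifying with the identity $a_id_i-b_ic_i=1$, I expect to find
\[
\begin{pmatrix}1 & -z_i\\0 & 1\end{pmatrix}\begin{pmatrix} d_i & -b_i\\ -c_i & a_i\end{pmatrix}\begin{pmatrix}1 & g_i(z_i)\\0 & 1\end{pmatrix}=\begin{pmatrix} c_iz_i+d_i & 0\\ -c_i & (c_iz_i+d_i)^{-1}\end{pmatrix},
\]
which indeed lies in $K^{\C}P^{-}$. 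The vanishing of the $(1,2)$ entry is the one identity to check by expansion; everything else is immediate.

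Next, I would plug this matrix into the explicit formula \eqref{eqn xx} for the representation of $K^{\C}P^{-}$, which gives
\[
\rho^0\begin{pmatrix} c_iz_i+d_i & 0\\ -c_i & (c_iz_i+d_i)^{-1}\end{pmatrix}=\exp\!\Big(\tfrac{-c_i}{c_iz_i+d_i}\,\rho^0(y_i)\Big)\exp\bigl(2\log(c_iz_i+d_i)\rho^0(h_i)\bigr).
\]
Finally, I would combine the $n$ components. Since $\rho^0$ extends the representation of $\mathfrak{b}^n$ and the brackets $[h_i,h_j]=[y_i,y_j]=[h_i,y_j]=0$ hold in $\mathfrak{b}^n$ for $i\neq j$, the operators $\rho^0(h_i),\rho^0(y_i)$ from different factors pairwise commute, so the exponentials for different $i$ commute and the product over $i$ can be written in any order. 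This yields exactly the displayed expression for $J(\tilde g,\bz)$ in \eqref{cocyclecomp3}.

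The computation is essentially routine; the only place that requires care is verifying that the product $s(\bz)^{-1}g^{-1}s(g(\bz))$ actually lands in $\mathbb{B}$ (so that $\rho^0$ is applicable) and identifying the correct $K^{\C}P^{-}$-entries to feed into \eqref{eqn xx}. Once that identification is made, the rest is just bookkeeping and an application of the commuting-exponentials argument across the $n$ factors.
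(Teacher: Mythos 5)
Your proposal is correct and follows essentially the same route as the paper: both start from the formula $J(\tilde g,\bz)=\big(\prod_i(g_i'(z_i))^{\a_i}\big)\rho^0(s(\bz)^{-1}g^{-1}s(g(\bz)))$ of Theorem \ref{construction of cocycle}, identify $s(z_i)^{-1}g_i^{-1}s(g_i(z_i))=\left(\begin{smallmatrix} c_iz_i+d_i & 0\\ -c_i & (c_iz_i+d_i)^{-1}\end{smallmatrix}\right)\in K^{\C}P^-$, and apply the definition \eqref{eqn xx} of $\rho^0$ componentwise. The only difference is that you carry out the matrix multiplication explicitly, which the paper leaves implicit.
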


\begin{proof}
Applying \eqref{cocyclecomp1} for the trivialization induced by the holomorphic section $s(\bz)$, we have that the cocycle corresponding to the holomorphic homogeneous vector bundle $E(G^{\C},\rho^0)|_{\D^n}\ra \D^n$ is 
\begin{flalign*}
& J^0(\tilde{g},\bz)
= \rho^0\left(\!\exp\left(\!\!\left(\!\frac{-c_1}{c_1z_1 + d_1}\!\right) y_1\!\!\right) \exp\left(\left(2\log(c_1z_1 + d_1)\right)h_1\right)\, \exp\left(\!\!\left(\!\frac{-c_2}{c_2z_2 + d_2}\!\right) y_2\!\!\right)\! \right.\\
& \left. \exp\left(\left(2 \log(c_2z_2 + d_2)\right)h_2\right)\cdots \exp\left(\!\!\left(\!\frac{-c_n}{c_nz_n + d_n}\!\right) y_n\!\!\right)\, \exp\left(\left(2  \log(c_nz_n + d_n)\right)h_n \right)\right)\\
&= \exp\left(\!\!\left(\!\frac{-c_1}{c_1z_1 + d_1}\!\right) \rho^0(y_1)\!\!\right)\, \exp\left(\left(2 \log(c_1z_1 + d_1)\right)\rho^0(h_1)\right)\, \exp\left(\!\!\left(\!\frac{-c_2}{c_2z_2 + d_2}\!\right) \rho^0(y_2)\!\!\right)\, \\
& \exp\left(\left(2 \log(c_2z_2 + d_2)\right)\rho^0(h_2)\right)\cdots \exp\left(\!\!\left(\!\frac{-c_n}{c_nz_n + d_n}\!\right) \rho^0(y_n)\!\!\right)\, \exp\left(\left(2 \log(c_nz_n + d_n)\right)\rho^0(h_n) \right),
\end{flalign*}
where the last equality holds due to the definition of $\rho^0$ given by the equation \eqref{eqn xx}. Consequently, the proof follows from Theorem \ref{construction of cocycle}.
\end{proof}

Thus every $r$ - cocyle on $\tilde{G} \times \mathbb{D}^n$ is obtained from a $r$ dimensional indecomposable representation of $\mathfrak{b}^n$ such that $\rho$ is diagonalizable on the subalgebra spanned by the set $\{h_1,\hdots,h_n\}$ with $h_i=(0,\hdots,0,h,0,\hdots,0)$, where $h$ is in the $i$-th slot. So in order to characterize all $r$ - cocyles on $\tilde{G} \times \mathbb{D}^n$, it is enough to classify all indecomposable $r$ dimensional representations $\rho$ of $\mathfrak{b}^n$ such that each $\rho(h_i),\,\,1 \leq i \leq n$, is diagonalizable. 

Consequently, in what follows, we assume without loss of generality that each $\rho(h_i),\,\,1 \leq i \leq n$, is diagonalizable  in the representation $\rho$ of $\mathfrak{b}^n$.

The description of all such indecomposable representations of $\mathfrak{b}^2$ occupies the following section.

\section{Classification of irreducible homogeneous {\h}s over $\D^2$}\label{Section 5}
We describe, in this section, all irreducible {\h}s over $\D^2$ homogeneous \w $\mob^2$ whose associated representations are \textit{multiplicity-free}. Note that it is equivalent to classifying all indecomposable \textit{multiplicity-free} representations of the solvable Lie subalgebra $\mathfrak{b}^2$ of $\mathfrak{sl}(2,\C) \oplus \mathfrak{sl}(2,\C)$ such that each $\rho(h_i)$ is diagonalizable as described in the previous section. Recall from Section 4 in \cite{MFHOCD} that a representation $\rho$ of $\mathfrak{b}$ is called multiplicity-free representation if each eigenspace of $\rho(h)$ is one dimensional. Adapting from the one variable case, we define the \textit{multiplicity-free} representations as follows.

%Given a representation $\rho : \mathfrak{b}^{n} \to \mathfrak{gl}(r, \mathbb{C})$, let $\rho(h) : \mathbb{C}^r \to \mathbb{C}^r \oplus\cdots\oplus \mathbb{C}^r$ be the linear map defined by $v \mapsto (\rho(h_1)v,\hdots, \rho(h_n)v),\,\,v \in \mathbb{C}^r.$

\begin{defn}\label{mult free}
A representation $\rho : \mathfrak{b}^n \to \mathfrak{gl}(r, \mathbb{C})$ is said to be multiplicity-free if the linear map $D_{\rho} : \mathbb{C}^r \to \mathbb{C}^r \oplus\cdots\oplus \mathbb{C}^r$, defined by $v \mapsto (\rho(h_1)v,\hdots, \rho(h_n)v)$ has distinct joint eigenvalues, where  $h_i=(0,\hdots,0,h,0,\hdots,0)$,  $h\in\mathfrak{b}$ is in the $i$-th slot, $i=1,2,\hdots,n$. In other words, each subspace which is simultaneously an eigenspace of each one of the $\rho(h_i)$ (this subspace is called joint eigenspace of $D_{\rho}$) is $1$-dimensional.
\end{defn}

Let $V_\theta$ denote the joint eigenspace of $D_{\rho}$ corresponding to the joint eigenvalue $\theta = (\theta_1, \theta_2)$. It follows from the definition of the representation $\rho$ along with the identities
$$[h_i, h_j] = [h_i, y_j] = [y_i, y_j] = 0,\,\,\mbox{if}\,\, i \neq j~\text{and}~[h_i, y_i] = -y_i,\,\, i = 1, 2,$$
that $\rho(y_i)$ maps $V_\theta$ to either $0$ or $V_{\theta - \epsilon_i}$, $i = 1, 2$. We now describe some elementary properties of multiplicity-free representations of $\mathfrak{b}^2$ in the following lemma.

\begin{lem}\label{properties of indecomposable rep}
For a fixed but arbitrary multiplicity-free representation $\rho : \mathfrak{b}^2 \to \mathfrak{gl}(r, \mathbb{C})$, the following two statements hold.

\begin{itemize}
\item[(i)] If $(\lambda, \mu)$, $(\lambda - 1, \mu)$, $(\lambda, \mu - 1)$ and $(\lambda - 1, \mu - 1)$ are all joint eigenvalues of $D_{\rho}$, then one of $\rho(y_1)(V_{(\lambda, \mu)})$ or $\rho(y_2)(V_{(\lambda-1, \mu)})$ is $0$ if and only if $\rho(y_1)(V_{(\lambda, \mu-1)}) = 0$ or $\rho(y_2)(V_{(\lambda, \mu)}) = 0$.

\item[(ii)] For some $i \in \{1, 2\}$, suppose $\alpha, \beta$ are two eigenvalues of $\rho(h_i)$ with $\alpha < \beta$ and no $x\in\mathbb{R}$ with $\alpha < x < \beta$ is an eigenvalue of $\rho(h_i)$. Then $\rho$ is decomposable, provided $\beta - \alpha \neq 1$.
\end{itemize}
\end{lem}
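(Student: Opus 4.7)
The plan is to handle the two parts separately, relying only on the commutation relations $[h_i,y_j]=[h_i,h_j]=[y_i,y_j]=0$ for $i\ne j$ together with $[h_i,y_i]=-y_i$.

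For part (i), I would observe that $[y_1,y_2]=0$ forces the square
\[
\begin{CD}
V_{(\lambda,\mu)}    @>\rho(y_1)>> V_{(\lambda-1,\mu)} \\
@V\rho(y_2)VV                              @VV\rho(y_2)V \\
V_{(\lambda,\mu-1)}  @>\rho(y_1)>> V_{(\lambda-1,\mu-1)}
\end{CD}
\]
to commute, while the multiplicity-free hypothesis makes each of the four joint eigenspaces one-dimensional. Writing $a,b,c,d$ for $\rho(y_1)|_{V_{(\lambda,\mu)}}$, $\rho(y_2)|_{V_{(\lambda-1,\mu)}}$, $\rho(y_1)|_{V_{(\lambda,\mu-1)}}$, $\rho(y_2)|_{V_{(\lambda,\mu)}}$ respectively, each of these is a linear map between one-dimensional spaces and is thus either zero or an isomorphism; moreover $b\circ a=c\circ d$ as maps $V_{(\lambda,\mu)}\to V_{(\lambda-1,\mu-1)}$. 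Consequently $ba\ne 0$ iff both $a$ and $b$ are nonzero, and $cd\ne 0$ iff both $c$ and $d$ are nonzero, so passing to contrapositives yields the claimed equivalence.

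For part (ii), I propose a case split on the size of the gap $\beta-\alpha$. When $\beta-\alpha>1$, I would let $W_{-}$ be the span of all $\rho(h_i)$-eigenvectors with eigenvalue $\le\alpha$ and $W_{+}$ the span of those with eigenvalue $\ge\beta$; since no eigenvalue of $\rho(h_i)$ lies in $(\alpha,\beta)$, one has $V=W_{-}\oplus W_{+}$. Both summands are $\rho(h_i)$-stable by construction and are preserved by $\rho(h_j),\rho(y_j)$ for $j\ne i$ because these commute with $\rho(h_i)$ and so preserve every individual eigenspace. The only delicate step, which is precisely where the gap hypothesis is consumed, is the invariance of $W_{+}$ under $\rho(y_i)$: an eigenvector of eigenvalue $\nu\ge\beta$ is sent to an eigenvector of eigenvalue $\nu-1$, and since $\nu-1\ge\beta-1>\alpha$, the number $\nu-1$ lies either in $[\beta,\infty)$ (so the image stays in $W_{+}$) or in the forbidden gap $(\alpha,\beta)$ (so the image is $0$).

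When $0<\beta-\alpha<1$, the magnitude-based split fails, but then $\alpha-\beta\notin\mathbb Z$, so I would instead group the $\rho(h_i)$-eigenspaces by the residue class of their eigenvalue modulo $\mathbb Z$; this produces at least two nonzero summands, each preserved by $\rho(h_j),\rho(y_j)$ for $j\ne i$ (again by commutation with $\rho(h_i)$) and by $\rho(y_i)$, which lowers the eigenvalue by $1$ and hence preserves its residue. In both cases $\rho$ decomposes nontrivially, so it is decomposable. The step I expect to require the most care is the invariance of $W_{+}$ under $\rho(y_i)$ in the first case, since that is the only place where the precise strict inequality $\beta-\alpha>1$ (rather than merely $\beta-\alpha\ne1$) is indispensable.
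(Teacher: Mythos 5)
Your proof is correct. Part (i) coincides with the paper's argument: the relation $\rho(y_1)\rho(y_2)=\rho(y_2)\rho(y_1)$ restricted to $V_{(\lambda,\mu)}$ gives $b\circ a=c\circ d$ as maps between one-dimensional spaces, and such a composite vanishes exactly when one of its factors does; you have merely written out what the paper leaves as ``directly follows''. In part (ii), your first case is the paper's own decomposition $W_-=\oplus_{t\le\alpha}W_t$, $W_+=\oplus_{t\ge\beta}W_t$, but the paper applies this single splitting uniformly for all $\beta-\alpha\neq 1$, and there the invariance of $W_+$ under $\rho(y_i)$ can genuinely fail when $0<\beta-\alpha<1$: for instance $\rho(h_1)=\operatorname{diag}(1,0,\tfrac12)$, $\rho(y_1)=E_{21}$ (the matrix unit), $\rho(h_2)=\rho(y_2)=0$ is multiplicity-free, the pair $\alpha=0$, $\beta=\tfrac12$ satisfies the hypotheses, yet $\rho(y_1)$ carries the eigenspace for the eigenvalue $1$ (inside $W_+$) onto the eigenspace for $0$ (inside $W_-$). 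Your second case --- decomposing by the residue of the $\rho(h_i)$-eigenvalue modulo $\mathbb Z$, which is available precisely when $\beta-\alpha\notin\mathbb Z$ and hence covers $0<\beta-\alpha<1$ --- is a genuinely different splitting that closes this gap; it works because $\rho(y_i)$ shifts the eigenvalue by $-1$ while the generators of the other factor commute with $\rho(h_i)$ and therefore preserve each of its eigenspaces (here one uses the standing assumption that the $\rho(h_i)$ are diagonalizable). Since the two cases $\beta-\alpha>1$ and $\beta-\alpha\notin\mathbb Z$ exhaust $\beta-\alpha\neq 1$, your argument is complete, and on this point it is more careful than the proof printed in the paper.
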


\begin{proof}
(i) It directly follows from the property that $\rho(y_1)$ and $\rho(y_2)$ commute. In other words, the following diagram commutes.
$$
\xymatrix{
V_{(\l,\mu)}\ar[rr]_{\rho(y_1)}^{\circled{1}} \ar[d]_{\circled{4}}^{\rho(y_2)} && V_{(\l-1,\mu)}\ar[d]_{\rho(y_2)}^{\circled{2}}\\
V_{(\l,\mu-1)}\ar[rr]_{\circled{3}}^{\rho(y_1)} && V_{(\l-1,\mu-1)}}
$$
So the statement can be rephrased as follows: absence of one of the arrows \circled{1} and \circled{2} forces at least one of the arrows of \circled{3} and \circled{4} to be absent in the above picture and vice versa.

(ii) Without loss of generality, we assume that $\alpha$ and $\beta$ are eigenvalues of $\rho(h_1)$. It then follows from the identity $[\rho(h_1), \rho(y_1)] = -\rho(y_1)$ that $\rho(y_1)v = 0$ for any eigenvector $v$ of $\rho(h_1)$ corresponding to the eigenvalue $\alpha$. So consider two subspaces $W_1 = \oplus_{t \leq \alpha} W_t$ and $W_2 = \oplus_{t \geq \beta}W_t$ where the sum is taken over all eigenspaces $W_t$ of $\rho(h_1)$ associated to the eigenvalue $t$. We now observe that both $W_1$ and $W_2$ are invariant under $\rho$ and $\mathbb{C}^r = W_1 \oplus W_2$. 
\end{proof}

From part (ii) of Lemma \ref{properties of indecomposable rep}, we conclude: If $\rho:\mathfrak{b}^2\ra \mathfrak{gl}(r,\C)$ is an indecomposable multiplicity-free representation, then there exist $\l_i>0$, $i=1,2$, and $s,t\in\N$ such that $\l_1,\l_1-1,\l_1-2,\hdots,\l_1-s$ and $\l_2,\l_2-1,\l_2-2,\hdots,\l_2-t$ are distinct eigenvalues (with appropriate multiplicities) of $\rho(h_1)$ and $\rho(h_2)$, respectively. Therefore, we set $V_{\theta}$, $\theta=(\theta_1,\theta_2)$, to be the joint eigenspace associated to the joint eigenvalue $(\l_1-\theta_1,\l_2-\theta_2)$ without causing any ambiguity. Take $V_{\theta}$ to be $\{0\}$ if $\l-\theta$ is not a joint eigenvalue of $D_{\rho}$ where $\l=(\l_1,\l_2)$.

%\smallskip

%We now list a number of properties of an indecomposable multiplicity-free representation $\rho:\mathfrak{b}^2\ra\mathfrak{gl}(r, \C)$. We then show that these  properties actually characterize such representations.

For a given multiplicity-free representation $\rho$ of $\mathfrak{b}^2$, we associate a planar graph to it and then relate the indecomposability of $\rho$ to the connectedness of this graph. More precisely, this graph is obtained by taking $\theta=(\theta_1,\theta_2)\in\N\cup\{0\}\times\N\cup\{0\}$ as it's vertices whenever $V_{\theta}$ is a joint eigenspace of $D_{\rho}$ and the edge between two consecutive vertices $\theta$ and $\theta+\varepsilon_j$, $j=1,2$ exists when $\rho(y_j)|_{V_{\theta}}\neq 0$. It turns out that the connectedness of this graph is a necessary condition for $\rho$ to be indecomposable. However, in general, $\rho$ may be decomposable even if the graph associated to it is connected. We list below some properties  of  such graphs corresponding to an indecomposable representation $\rho$. 

\begin{itemize}
\vspace{0.01in}
\item[$P_1:$] For every $1\leq i\leq s$, if for any two $1\leq j_1<j_2\leq t$, both $(\l_1-i,\l_2-j_1)$ and $(\l_1-i,\l_2-j_2)$ are joint eigenvalues of  {\color{blue} $D_{\rho}$}, then for every $j_1\leq j\leq j_2$, so is $(\l_1-i,\l_2-j)$.\vspace{0.05in}
\item[$P_2:$] For every $1\leq j\leq t$, if for any two $1\leq i_1<i_2\leq s$, both $(\l_1-i_1,\l_2-j)$ and $(\l_1-i_2,\l_2-j)$ are joint eigenvalues of {\color{blue} $D_{\rho}$}, then so is $(\l_1-i,\l_2-j)$ for every $i_1\leq i\leq i_2$.\vspace{0.05in}
\item[$P_3:$] For any two consecutive eigenvalues $\lambda$ and $\lambda - 1$ of $\rho(h_1)$, there exists an eigenvalue $\mu$ of $\rho(h_2)$ such that $(\lambda, \mu)$ and $(\lambda - 1, \mu)$ both are joint eigenvalues of $\rho$. \vspace{0.05in}
\item[$P_4:$] If both $\l-\theta$ and $\l-\theta-\varepsilon_j$ are joint eigenvalues of {\color{blue} $D_{\rho}$} then $\rho(y_j)$ takes $V_{\theta}$ onto $V_{\theta+\varepsilon_j}$, $j=1,2$.
\end{itemize} 

Note that the property $P_1$ (resp. $P_2$) indicates that if two elements of $\N\cup\{0\}\times\N\cup\{0\}$ in a vertical (resp. horizontal) line are vertices then every elements of $\N\cup\{0\}\times\N\cup\{0\}$ in between them are also vertices. Property $P_3$ says that if we consider two horizontal strings of vertices, then there exist vertices with common second coordinates in each string and property $P_4$ says that any two consecutive vertices must be joined by an edge. We now show that these  properties actually characterize such representations. 

\begin{thm}\label{Classification of rep}
Let $\rho:\mathfrak{b}^2\ra \mathfrak{gl}(r, \C)$ be a multiplicity-free representation. Then $\rho$ is indecomposable if and only if $\rho$ satisfies $P_1$, $P_2$, $P_3$ and $P_4$.
\end{thm}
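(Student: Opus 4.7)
The plan is to reformulate indecomposability as a graph-connectedness question on the set $S$ of joint eigenvalues of $D_\rho$. Because $\rho(h_1)$ and $\rho(h_2)$ commute and are each diagonalizable with one-dimensional joint eigenspaces (by multiplicity-freeness), $\C^r = \bigoplus_{\theta \in S} V_\theta$ with $\dim V_\theta = 1$. Any $\rho$-invariant subspace $W$ is stable under the commuting diagonalizable operators $\rho(h_1), \rho(h_2)$, hence decomposes as $W = \bigoplus_{\theta \in T} V_\theta$ for a unique $T \subseteq S$. Invariance under $\rho(y_j)$ is equivalent to: $\theta \in T$ and $\rho(y_j)|_{V_\theta} \neq 0$ implies $\theta + \varepsilon_j \in T$. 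A non-trivial invariant direct-sum decomposition therefore corresponds to a non-trivial partition $S = S_1 \sqcup S_2$ such that no directed edge of the associated graph crosses the partition; equivalently, the undirected adjacency edge-graph on $S$ is disconnected. Thus the theorem reduces to the claim that this edge-graph is connected if and only if $P_1$--$P_4$ hold.

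For the \emph{if} direction, I assume $P_1$--$P_4$. Given two vertices $\theta = (a, b)$ and $\theta' = (c, d)$ in $S$, I would construct an edge-path connecting them as follows. Property $P_3$ guarantees that any two consecutive occupied columns (consecutive $\rho(h_1)$-eigenvalues) share at least one common row; $P_1$ (resp.\ $P_2$) ensures that the vertex set in each column (resp.\ row) is an interval; and $P_4$ guarantees every adjacency in $S$ is an actual edge. Combining these, I slide within a column to the shared row, step horizontally to the adjacent column, and repeat, yielding an edge-path from $\theta$ to $\theta'$. Hence the edge-graph is connected and $\rho$ is indecomposable.

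For the \emph{only if} direction, I argue the contrapositive: failure of any $P_k$ produces a non-trivial invariant decomposition. If $P_3$ fails, i.e., two consecutive $\rho(h_1)$-eigenvalues $\lambda, \lambda-1$ share no common $\rho(h_2)$-eigenvalue, then every joint eigenspace with $\rho(h_1)$-eigenvalue $\lambda$ has its $\rho(y_1)$-image in the zero subspace, so the $\rho(h_1)$-eigenspace decomposition provides the splitting directly, mimicking Lemma \ref{properties of indecomposable rep}(ii). If $P_4$ fails at some adjacent $\theta, \theta+\varepsilon_j \in S$, I iterate Lemma \ref{properties of indecomposable rep}(i) across overlapping complete $2 \times 2$ squares to propagate the missing edge along the strip perpendicular to $\varepsilon_j$, using $P_1$ and $P_2$ to keep that strip intact, and so separate $S$ into two edge-closed pieces. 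Finally, if $P_1$ (or $P_2$) fails with a gap $(i, j) \notin S$ sandwiched between $(i, j_1), (i, j_2) \in S$, then the commutativity identity $\rho(y_1)\rho(y_2) = \rho(y_2)\rho(y_1)$ applied to the partial $2 \times 2$ square with missing corner $(i, j)$ forces an adjacent edge to vanish; the resulting $P_4$-failure then cascades as above to the desired decomposition.

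The main obstacle is the propagation step in the $P_4$-failure and $P_1$/$P_2$-failure cases: one must carefully track how the commutativity constraint at one $2 \times 2$ (possibly incomplete) square forces an edge to vanish in an adjacent square, and show that the resulting cascade isolates a proper non-empty edge-closed subset of $S$ rather than merely yielding a local defect. The distinct failure modes captured by $P_1$--$P_4$ correspond to distinct ways in which such a cascade can terminate---at the boundary of the occupied region, through incomplete squares produced by missing vertices, or through a `gap between consecutive columns' that only $P_3$ can exclude---which is why the four conditions must be stated separately and cannot be collapsed into mere graph-connectedness.
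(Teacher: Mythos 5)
Your reduction of decomposability to disconnectedness of the edge-graph is correct and clean: an invariant subspace of the commuting diagonalizable pair $(\rho(h_1),\rho(h_2))$ is a sum of the one-dimensional joint eigenspaces, and a pair of complementary invariant subspaces is exactly a nontrivial partition of the vertex set crossed by no edge. (This is sharper than the paper's own informal remark that connectedness is necessary but not sufficient for indecomposability; that remark can only refer to adjacency of vertices, not to the edge-graph you use.) Your path construction for the direction ``$P_1$--$P_4$ imply indecomposable'' is also complete: $P_3$ supplies a common row for consecutive columns, $P_1$ makes each column an interval, and $P_4$ turns every adjacency into an edge. This is in substance the paper's argument for that direction, which instead picks a minimal column meeting the second summand and derives a contradiction. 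The $P_3$ case of the reverse direction is likewise fine.

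The gap is in the reverse direction for $P_1$, $P_2$ and $P_4$. There you only describe a plan --- iterate Lemma \ref{properties of indecomposable rep}(i) across overlapping $2\times 2$ squares to propagate the missing edge --- and you yourself flag the propagation step as the main obstacle without resolving it. The difficulty is real: Lemma \ref{properties of indecomposable rep}(i) applied to a complete square with one vanishing arrow yields only a \emph{disjunction} (one of the two arrows on the other route vanishes), so the defect does not travel along a single well-defined strip, and one must show that however the branches resolve at each stage a proper nonempty edge-closed subset of $S$ gets isolated. The paper supplies exactly the missing control by an induction on the number of distinct eigenvalues of $\rho(h_1)$: it first proves that the restriction $\tilde\rho=\rho|_{E_1\oplus\cdots\oplus E_k}$ to the lower columns is again indecomposable, hence by the induction hypothesis already satisfies $P_1$--$P_4$; this pins down the shape of columns $1,\dots,k$ and converts each branch of the disjunction into one of two explicit invariant splittings (the subspaces $A_0,B_0$ and $A_1,B_1$ in the paper's proof). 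Without that inductive step, or an equivalent global bookkeeping of the cascade, your argument that indecomposability forces $P_1$, $P_2$ and $P_4$ is not a proof.
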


\begin{proof}
%Let $\rho:\mathfrak{b}^2\ra \mathfrak{gl}(r, \C)$ be an indecomposable multiplicity-free representation. 
We first show that the representation $\rho$ satisfies $P_1$, $P_2$, $P_3$ and $P_4$ with the help of mathematical induction on the number of distinct eigenvalues of $\rho(h_1)$ assuming that it is indecomposable.

We begin with the case when $\rho(h_1)$ has only one eigenvalue. It follows that $\rho(y_1)=0$ and consequently, $\rho$ is indecomposable if and only if so is $\rho|_{\{0\}\oplus\mathfrak{b}}$. It is then easy to verify that $\rho$ satisfies $P_1$, $P_2$, $P_3$ and $P_4$. 

Assume that if $\rho(h_1)$ has $k$ distinct eigenvalues then $\rho$ satisfies $P_1$, $P_2$, $P_3$ and $P_4$. Let $\rho(h_1)$ have $k+1$ distinct eigenvalues, say, $\l_1,\l_1-1,\hdots,\l_1-k$ with eigenspaces $E_0,E_1,\hdots, E_k$, respectively, for some positive real number $\l_1$. It can be seen from the properties $P_1$, $P_2$, $P_3$ and $P_4$  that
\begin{itemize}
\item $\C^r=E_0\oplus E_1\oplus\cdots\oplus E_k$;
\item $\rho(y_1)$ maps $E_i$ into $E_{i+1}$ and $\rho(y_1)|_{E_k}=0$;
\item $\rho(y_2)$ keeps each $E_i$ invariant.
\end{itemize}
Consider the representation $\tilde{\rho}:=\rho|_{E_1\oplus E_2\oplus\hdots\oplus E_k}$. Clearly, $\tilde{\rho}$ is multiplicity-free. We show that $\tilde{\rho}$ is an indecomposable representation.

%If possible suppose that $\tilde{\rho}$ is decomposable. Let
On the contrary, assume that $A$ and $B$ are two subspaces of $E_1\oplus\hdots\oplus E_k$ such that both $A$ and $B$ are invariant under $\tilde{\rho}$ and $\C^r=A\oplus B$. Since $\rho$ is multiplicity-free both $A$ and $B$ are spanned by joint eigenspaces of $\rho$.

Let $\Lambda = \{(0, j) : \rho(y_2) \left(V_{(0, j-1)}\right) = 0\,\,\mbox{or}\,\,(\lambda_1, \lambda_2 - j + 1)\,\,\mbox{is not a joint eigenvalue}\}$. For each $(0, j) \in \Lambda$, define $\Lambda_j = \{(0, j+ p) : \mbox{either}\,\,p = 0\,\,\mbox{or if}\,\,p \geq 1,\,\,\mbox{then}\,\,\rho(y_2) \left(V_{(0, j+ p - 1)}\right) \neq \{0\} \}$.

We claim that if there exists $(0, j + k)$ in $\Lambda_j$ such that $\rho(y_1) \left(V_{(0, j+k)}\right)$ is a non-zero element of $A$, then there does not exist any $(0, j + k')$ in $\Lambda_j$ such that $\rho(y_1) \left(V_{(0, j+k')}\right)$ is a non-zero element of $B$.

Assume that $\rho(y_1) \left(V_{(0, j+k)}\right)$ and $\rho(y_1) \left(V_{(0, j+k')}\right)$ are non-zero elements of $A$ and $B$, respectively, for some $(0, j+k), (0, j+k') \in \Lambda_j$.

If $k' < k$ then there exists $s,\,\,k' \leq s \leq k$ such that $\rho(y_2) \left(V_{(1, j+s)}\right) = 0$. Part (i) of Lemma \ref{properties of indecomposable rep} yields that $\rho(y_1) \left(V_{(0, j+t)}\right) = 0$ for every $t \geq s$ such that $(0, j+t) \in \Lambda_j$. But it contradicts that $\rho(y_1) \left(V_{(0,j+k)}\right)\neq \{0\}$ in $A$. On the other hand, for $k<k'$, a similar argument implies that $\rho(y_1)\left(V_{(0, j+k')}\right) = 0,$ contradicting that $\rho(y_1)\left(V_{(0, j+k')}\right)$ is a non-zero element of $B$. This verifies the claim.  

Let $\Lambda_A = \cup_j \Lambda_j$ where the union is taken over all $j$ such that $(0, j) \in \Lambda$ with the property that there exists $(0, j+k)$ in $\Lambda_j$ such that $\rho(y_1) \left(V_{(0, j+k)}\right)\neq \{0\}$ in $A$. Consequently, the subspaces $A_1=A\oplus\left(\oplus_{(0, k)\in \Lambda_A}V_{(0,k)}\right)$ and $B_1=B\oplus\left(\oplus_{(0, l) \in \Lambda \setminus \Lambda_A}V_{(0,l)}\right)$ 
decompose $\rho$.
%turn out to be reducing subspaces for $\rho$.

Thus it shows that $\tilde{\rho}$ is indecomposable. Moreover, since $\tilde{\rho}=\rho|_{E_1\oplus\hdots\oplus E_k}$ and $\rho$ is multiplicity-free so is $\tilde{\rho}$. Consequently, it follows from the induction hypothesis that $\tilde{\rho}$ satisfies $P_1$, $P_2$, $P_3$ and $P_4$.

We now prove that $\rho$ satisfies $P_1$, $P_2$, $P_3$ and $P_4$ as follows.

\textit{Proof of $P_1$:} Suppose that there exists $j$ with $j_0\leq j\leq p_0$ such that $(\l_1,\l_2-j)$ is not a joint eigenvalue of $D_{\rho}$. Set $l=\text{min}\{j':j_0\leq j'\leq p_0,~\text{and}~\rho(y_2)|_{V_{(0,j')}}=0\}$. Note that the set $\{j':j_0\leq j'\leq p_0,~\text{and}~\rho(y_2)|_{V_{(0,j')}}=0\}$ is non-empty, otherwise, $(\l_1,\l_2-j)$ would be a joint eigenvalue of $D_{\rho}$. Now either $\rho(y_2)|_{V_{(1,l)}}=0$ or $\rho(y_2)|_{V_{(1,l)}}\neq 0$.

If $\rho(y_2)|_{V_{(1,l)}}=0$, we have from the induction hypothesis that none of $(\l_1-1,\l_2-j')$ for $j'\geq l+1$ is a joint eigenvalue of  $D_{\rho}$. Consequently, the subspaces $A_0=\oplus_{j'\geq l}V_{(0,j')}$ and $B_0=(E_1\oplus\cdots\oplus E_k)\oplus\left(\oplus_{j'\leq l}V_{(0,j')}\right)$ become reducing subspaces for $\rho$.

On the other hand, if $\rho(y_2)|_{V_{(1,l)}}\neq 0$ it follows from Lemma \ref{properties of indecomposable rep} that $\rho(y_1)|_{V_{(0,j')}}= 0$ for $j'\leq l$ implying that the subspaces $A_1=\oplus_{j'\leq l}V_{(0,j')}$ and $B_1=(E_1\oplus\cdots\oplus E_k)\oplus\left(\oplus_{j'\geq l}V_{(0,j')}\right)$ decompose $\rho$.

Thus for all $j_0\leq j\leq p_0$, $(\l_1,\l_1-j)$ is a joint eigenvalue for  $D_{\rho}$ completing the proof of $P_1$.

\textit{Proof of $P_2$:} Since $\rho$ satisfies $P_1$ the induction hypothesis together with Lemma \ref{properties of indecomposable rep} yield that $\rho$ satisfies $P_2$.

\textit{Proof of $P_3$:} Suppose that there does not exist any $\mu$ such that $(\lambda_1 , \mu)$ and $(\lambda_1 - 1, \mu)$ are joint eigenvalues of $D_{\rho}$. It clearly follows that both the subspaces $E_1 \oplus E_2 \oplus \cdots \oplus E_{k}$ and $E_0$ are invariant under $\rho$ contradicting that $\rho$ is indecomposable.

\textit{Proof of $P_4$:} A similar reasoning as in the cases of $P_1$ and $P_2$ shows  that $\rho$ is decomposable whenever both $\l-\theta$ and $\l-\theta-\varepsilon_i$ are joint eigenvalues of $D_{\rho}$ such that $\rho(y_i)|_{V_{\theta}}=0$ for $i=1,2$.

%On the contrary, if possible suppose that, for each $i=1,2$,   completing the proof of the forward direction.

For the converse, let us assume that $\rho$ is decomposable and $\rho$ satisfies $P_1$, $P_2$, $P_3$ and $P_4$. Let $A$ and $B$ be two complementary subspaces of $\C^r$ such that both $A$ and $B$ are invariant under $\rho$. Since $\rho$ is multiplicity-free both $A$ and $B$ are spanned by joint eigenspaces of $\rho$. 

Let $l =$ min $\{ i : $ there exists $k$ such that $(\lambda_1 - i, \lambda_2 - k)$ is a joint eigenvalue of $D_{\rho}$ and the corresponding eigenvector is in $B\}$.

Suppose $l > 0$. Since $\rho$ satisfies $P_3$ there exists $p$ such that both $(\lambda_1 - l + 1, \lambda_2 - p)$ and $(\lambda_1 - l, \lambda_2 - p)$ are joint eigenvalues of $D_{\rho}$. Then the condition $P_4$ together with the fact that the joint eigenvector corresponding to the joint eigenvalue $(\lambda_1 - l + 1, \lambda_2 - p)$ is in $A$ implying that the joint eigenvector corresponding to the joint eigenvalue $(\lambda_1 - l, \lambda_2 - p)$ is also in $A$. If $p < k$ the conditions $P_1$ and $P_4$ together imply that the joint eigenvector corresponding to the joint eigenvalue $(\lambda_1 - l, \lambda_2 - k)$ is in $A$ which is a contradiction. If $p > k$ again the conditions $P_1$ and $P_4$ imply that the joint eigenvector corresponding to the joint eigenvalue $(\lambda_1 - l, \lambda_2 - p)$ is in $B$ which is again a contradiction. A similar contradiction is obtained in the case of $l = 0$ as well. 

Thus we have shown that $\rho$ cannot satisfy the conditions $P_1, P_2, P_3$ and $P_4$ simultaneously whenever $\rho$ is decomposable.
\end{proof}

The following corollary describes a multiplicity-free indecomposable representation of $\mathfrak{b}^2$. The proof is an easy consequence of Lemma \ref{properties of indecomposable rep} and the property $P_4$.

\begin{cor}\label{structure of representation}
Suppose $\rho : \mathfrak{b}^2 \to \mathfrak{gl}(r, \C)$ is a multiplicity-free indecomposable representation such that $\lambda, \lambda - 1$ are eigenvalues of $\rho(h_1)$ with multiplicities $m_1, m_2$, respectively. If $(\lambda, \mu - j_1 - l)$, $0 \leq l \leq m_1$ and $(\lambda - 1, \mu - j_2 - k)$, $0 \leq k \leq m_2$ are joint eigenvalues of $D_{\rho}$, then we have $j_2 \leq j_1$ and $j_2 + m_2 \leq j_1 + m_1$.
\end{cor}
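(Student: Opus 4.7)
The plan is to prove each inequality by contradiction, leveraging the commutation $\rho(y_1)\rho(y_2) = \rho(y_2)\rho(y_1)$ together with property $P_4$. From property $P_1$ the joint eigenvalues with first coordinate $\lambda$ occupy the consecutive horizontal segment $\{(\lambda,\mu - j_1 - l) : 0 \leq l \leq m_1\}$, and analogously for first coordinate $\lambda - 1$; property $P_3$ forces the two segments to overlap, yielding $j_2 \leq j_1 + m_1$ and $j_1 \leq j_2 + m_2$.

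To prove $j_2 \leq j_1$, I assume for contradiction $j_2 > j_1$. The overlap bound ensures that both $(\lambda,\mu - j_2 + 1)$ and $(\lambda,\mu - j_2)$ lie in row $\lambda$, while $(\lambda - 1,\mu - j_2 + 1)$ is strictly to the right of row $\lambda - 1$ and so is not a joint eigenvalue. Picking a non-zero $v \in V_{(\lambda,\mu - j_2 + 1)}$, the previous observation forces $\rho(y_1) v = 0$, hence $\rho(y_2)\rho(y_1) v = 0$. In contrast, two successive applications of $P_4$---to the adjacent pairs $(\lambda,\mu - j_2 + 1),(\lambda,\mu - j_2)$ (via $\rho(y_2)$) and $(\lambda,\mu - j_2),(\lambda - 1,\mu - j_2)$ (via $\rho(y_1)$)---show that $\rho(y_1)\rho(y_2) v$ is a non-zero vector of $V_{(\lambda - 1,\mu - j_2)}$. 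This contradicts commutativity.

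The inequality $j_2 + m_2 \leq j_1 + m_1$ is obtained by a symmetric argument at the other end of the two rows. Under the contradiction hypothesis $j_2 + m_2 > j_1 + m_1$, the position $(\lambda - 1,\mu - j_1 - m_1 - 1)$ belongs to row $\lambda - 1$, while $(\lambda,\mu - j_1 - m_1 - 1)$ is strictly to the left of row $\lambda$. Choosing a non-zero $v \in V_{(\lambda,\mu - j_1 - m_1)}$, the map $\rho(y_2)$ annihilates $v$, so $\rho(y_1)\rho(y_2) v = 0$, whereas two successive applications of $P_4$ show that $\rho(y_2)\rho(y_1) v$ is a non-zero vector of $V_{(\lambda - 1,\mu - j_1 - m_1 - 1)}$. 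Commutativity is again violated. The only bookkeeping to watch is that each eigenspace invoked is actually non-zero, which follows from the interval descriptions of the two rows together with the overlap guaranteed by $P_3$; beyond this, no structural input on $\rho$ is needed.
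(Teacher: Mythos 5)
Your proof is correct and follows exactly the route the paper indicates (the paper only remarks that the corollary is ``an easy consequence of Lemma \ref{properties of indecomposable rep} and property $P_4$''): you use the commutativity $\rho(y_1)\rho(y_2)=\rho(y_2)\rho(y_1)$ together with the surjectivity from $P_4$ at the two extreme ends of the rows, with $P_1$ and $P_3$ supplying the needed overlap of the two segments. The only addition is that you carefully verify the relevant joint eigenspaces are non-zero, which is a welcome filling-in of details the paper leaves implicit.
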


The theorem below provides a characterization of all the irreducible {\h}s over $\D^2$ homogeneous \w $\mob^2$ such that the associated representations are multiplicity-free.

\begin{thm}\label{includecurvature}
Let $J:\widetilde{\mob}^2\ra\text{GL}(r,\C)$ be the cocycle corresponding to irreducible {\h} over $\D^2$ which are homogeneous \w $\mob^2$. Assume that the associated representation is multiplicity-free. Then $J$ takes the form
$$J(\tilde{g},\bz)=\exp\left(\sum_{i=1}^2\dfrac{g_i''(z_i)}{2g_i'(z_i)}\rho(y_i)\right)\exp\left(\sum_{i=1}^2(-\log(g_i'(z_i))\rho(h_i))\right)$$
where $\rho : \mathfrak{b}^2 \to \mathfrak{gl}(r, \C)$ is a multiplicity-free indecomposable representation such that $V_{\theta}$ is the joint eigenspace of $D_{\rho}$ $=(\rho(h_1),\rho(h_2))$ associated to the joint eigenvalue $(\l_1-\theta_1,\l_2-\theta_2)$ and $\rho(y_i)V_{\theta}=V_{\theta+\varepsilon_i}$, $i=1,2$ for $\theta\in \{(i,j_i),\hdots,(i,j_i+m_i):0\leq i\leq s,j_i\geq j_{i'},j_i+m_i\geq j_{i'}+m_{i'},~\text{for}~i\leq i'\}$. Here $\l_2$ is the largest eigenvalue of $\rho(h_2)$, and $s$ and $m_i'$s are natural numbers such that $\l_1,\l_1-1,\hdots,\l_1-s$ are eigenvalues of $\rho(h_1)$ with multiplicities $m_0,m_1,\hdots,m_s$, respectively.
\end{thm}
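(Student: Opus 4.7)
The plan is to specialise the explicit cocycle formula obtained in the corollary to Theorem \ref{construction of cocycle} (for $n=2$) and translate its matrix-coefficient form into the intrinsic differential form stated in the theorem. No substantially new classification work is required beyond what is already in Theorem \ref{Classification of rep} and Corollary \ref{structure of representation}; the whole statement amounts to a change of variables together with an invocation of the classification.

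First I would write the cocycle in the form given by the corollary following Theorem \ref{construction of cocycle}, namely
\[
J(\tilde g,\bz)=\prod_{i=1}^{2}(g_i'(z_i))^{\a_i}\prod_{i=1}^{2}\exp\!\left(\tfrac{-c_i}{c_iz_i+d_i}\rho^0(y_i)\right)\exp\!\left(2\log(c_iz_i+d_i)\,\rho^0(h_i)\right),
\]
where $g_i(z)=(a_iz+b_i)/(c_iz+d_i)$ with $a_id_i-b_ic_i=1$, and $\rho=\sigma\otimes\rho^0$ is the decomposition from the proof of Theorem \ref{construction of cocycle}: $\sigma(h_i)=\a_i$, $\rho^0(y_i)=\rho(y_i)$ and $\rho^0(h_i)=\rho(h_i)+\a_iI$. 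A direct computation using $a_id_i-b_ic_i=1$ gives $g_i'(z_i)=(c_iz_i+d_i)^{-2}$ and $g_i''(z_i)/g_i'(z_i)=-2c_i/(c_iz_i+d_i)$, hence the identities
\[
\frac{-c_i}{c_iz_i+d_i}=\frac{g_i''(z_i)}{2g_i'(z_i)},\qquad 2\log(c_iz_i+d_i)=-\log g_i'(z_i).
\]
Using the commutation relations $[\rho(y_1),\rho(y_2)]=[\rho(h_1),\rho(h_2)]=0$ to rearrange the product so that the two $y$-exponentials stand together and the two $h$-exponentials stand together, the first factor becomes $\exp\bigl(\sum_i\tfrac{g_i''(z_i)}{2g_i'(z_i)}\rho(y_i)\bigr)$. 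The scalar prefactor is then absorbed into the $h$-exponential via
\[
\prod_i(g_i'(z_i))^{\a_i}\exp\!\left(-\sum_i\log g_i'(z_i)\,\rho^0(h_i)\right)=\exp\!\left(-\sum_i\log g_i'(z_i)\,(\rho^0(h_i)-\a_iI)\right)=\exp\!\left(-\sum_i\log g_i'(z_i)\,\rho(h_i)\right),
\]
which is exactly the second factor in the stated formula.

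Finally, the description of the joint eigenspaces is read off directly from the classification: by Theorem \ref{Classification of rep}, a multiplicity-free indecomposable $\rho$ is characterised by properties $P_1$--$P_4$, where $P_1$ and $P_2$ produce the row-by-row horizontal-segment support, $P_3$ guarantees vertical connectedness between consecutive rows, and $P_4$ is precisely the statement that $\rho(y_i)\colon V_\theta\to V_{\theta+\varepsilon_i}$ is surjective whenever both spaces are nonzero. The nested-endpoint inequalities $j_i\geq j_{i'}$ and $j_i+m_i\geq j_{i'}+m_{i'}$ for $i\leq i'$ quoted in the theorem follow from Corollary \ref{structure of representation} applied to each pair of adjacent rows. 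The only substantive point of care is sign bookkeeping: in Theorem \ref{construction of cocycle} the largest eigenvalue of $\rho(h_i)$ is denoted $-\a_i$, whereas in the present statement it is $\l_i$, so that $\a_i=-\l_i$; once this is kept straight, the absorption identity above produces exactly the intrinsic factor $\exp(-\log g_i'(z_i)\rho(h_i))$ appearing in the theorem, and everything else is mechanical.
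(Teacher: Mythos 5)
Your proposal is correct and follows essentially the same route as the paper, whose proof is just the one-line instruction to combine the commutation relations of $\mathfrak{b}^2$ with Equation \eqref{cocyclecomp3} and Corollary \ref{structure of representation}; you have simply carried out that combination explicitly (the identities $\tfrac{-c_i}{c_iz_i+d_i}=\tfrac{g_i''(z_i)}{2g_i'(z_i)}$ and $2\log(c_iz_i+d_i)=-\log g_i'(z_i)$, the rearrangement of the exponentials, and the absorption of the scalar prefactor, which in fact makes the sign of $\a_i$ immaterial since it cancels). The only relation you use but do not name is $[\rho(h_1),\rho(y_2)]=0$, needed to slide the $h_1$-exponential past the $y_2$-exponential, but this is one of the defining relations and is clearly intended by your phrase ``the commutation relations.''
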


\begin{proof} For the proof, we merely combine 
 $$[\rho(y_1), \rho(h_2)]=[\rho(y_1), \rho(y_2)]=[\rho(h_1), \rho(h_2)]=0,$$
with  Equation \eqref{cocyclecomp3} and Corollary \ref{structure of representation}.
\end{proof}
\begin{rem} Clearly, any one dimensional representation of $\mathfrak {b}^n$  is  obtained by taking tensor product of the one dimensional representations of $\mathfrak  b$. This provides an  independent validation  of   Theorem \ref{rank one charaterization}.
\end{rem}

\section{Irreducible Homogeneous tuples in $\mathrm B_2(\mathbb D^n)$}\label{Section 6}
In this section, we describe all irreducible homogeneous tuples in $\mathrm B_2(\mathbb{D}^n)$ with respect to the group $\mathsf G,$ which is taken to be either $\mob^n$ or  $\Aut(\mathbb{D}^n)$. All irreducible tuples in $\mathrm B_2(\mathbb{D}^n)$ which are homogeneous with respect to $\mob^n$ have been described. However, we show that there is no irreducible $\Aut(\mathbb{D}^n)$ - homogeneous tuple in $\mathrm B_2(\mathbb{D}^n)$. 

\begin{defn}\label{Def of irreducible tuple}
If the operators $T_1, \ldots, T_n$ have no common reducing subspace, that is, there is no projection that commutes with all of them, then we say that the $n$ - tuple $(T_1, T_2,\ldots, T_n)$ is irreducible. \end{defn}

Let $\l>0$ and $\mathbb{A}^{(\l)}$  denote the reproducing kernel Hilbert space consisting of holomorphic functions on the open unit disc $\mathbb D$ determined by the kernel  $K^{(\l)}(z, w) = (1 - z\bar{w})^{-\l}$ defined on $\mathbb D.$  Also, let  $M^{(\l)}$ denote the operator of multiplication by the coordinate function $z$  on $\mathbb{A}^{(\l)}$. Finally, let $\mathbb{A}^{(\lambda, \mu)}$  be the reproducing kernel Hilbert space determined by the kernel 
$$K^{(\lambda, \mu)}(z, w) = \begin{pmatrix}
    \frac{1}{(1 - z\bar{w})^{\lambda}} & \frac{z}{(1 - z\bar{w})^{\lambda+1}} \\
    \frac{\bar{w}}{(1 - z\bar{w})^{\lambda+1}} & \frac{\frac{1}{\lambda} + \mu + z\bar{w}}{(1 - z\bar{w})^{\lambda+2}}
\end{pmatrix}$$
defined on $\mathbb{D}.$  The operator $M^{(\lambda, \mu)}$ is the multiplication by the coordinate function $z$  on $\mathbb{A}^{(\lambda, \mu)}.$ It is well known (cf. Proposition 4, \cite{CADSR}) that any homogeneous operators in $\mathrm B_1 (\mathbb{D})$ are unitarily  equivalent to ${M^{(\l)}}^*$ for some $\l > 0,$  and every irreducible homogeneous operator in $\mathrm B_2 (\mathbb{D})$ must be unitarily equivalent to ${M^{(\lambda, \mu)}}^*$ for some $\lambda, \mu > 0$ \cite[Theorem 4.1]{HVBCDO}, \cite[Corollary 4.1]{MFHOCD}. 

We prove that the $n$ - tuple $(M_{z_1}, M_{z_2}, \ldots , M_{z_n})$ of multiplication operators by the coordinate functions acting on the Hilbert space $\mathbb{A}^{(\l_1,\mu)}\otimes\mathbb{A}^{(\l_2)}\otimes\cdots \otimes \mathbb{A}^{(\l_{n})}\subseteq \mbox{Hol}(\mathbb D^n, \mathbb C^2)$ is irreducible for $\mu>0$ and any tuple of positive real numbers $\bl=(\l_1,\hdots,\l_n)$. First, we prove a useful lemma.

\begin{lem}\label{Projections commuting with an irr tensor with identity op}
Let $H_1$ and $H_2$ be two Hilbert spaces and $T_i$ be an irreducible operator on $H_i$ for $i = 1, 2$. Suppose $P$ is a projection defined on $H_1 \otimes H_2$.

\noindent(a) If $P$ commutes with $I \otimes T_2$, then there exists a projection $P_1$ defined on $H_1$ such that $P = P_1 \otimes I$.

\noindent(b) If $P$ commutes with $T_1 \otimes I$, then there exists a projection $P_2$ defined on $H_2$ such that $P = I \otimes P_2$.
\end{lem}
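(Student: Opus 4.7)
The plan is to represent $P$ as an operator matrix with entries in $B(H_2)$, use the commutation hypothesis to force each entry into $\{T_2\}' \cap \{T_2^*\}'$, and then invoke irreducibility of $T_2$ to conclude that each entry is a scalar multiple of the identity.

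First, since $P = P^*$ and $P(I\otimes T_2) = (I\otimes T_2)P$, taking adjoints shows at once that $P$ also commutes with $I\otimes T_2^*$. Next, I would fix an orthonormal basis $\{e_\alpha\}$ of $H_1$ and represent $P$ as a block matrix $(P_{\alpha\beta})$ with entries $P_{\alpha\beta}\in B(H_2)$ defined by
\[
\langle P_{\alpha\beta}\xi,\eta\rangle_{H_2} \;=\; \langle P(e_\beta\otimes\xi),\,e_\alpha\otimes\eta\rangle_{H_1\otimes H_2}.
\]
The two commutation relations translate block-by-block into $T_2 P_{\alpha\beta} = P_{\alpha\beta}T_2$ and $T_2^* P_{\alpha\beta} = P_{\alpha\beta}T_2^*$ for every $\alpha,\beta$.

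The crucial step is to show that $\{T_2\}' \cap \{T_2^*\}' = \mathbb{C}\, I$. Given $A$ in that intersection, the self-adjoint operators $\tfrac{1}{2}(A+A^*)$ and $\tfrac{1}{2i}(A-A^*)$ also commute with $T_2$, and so do all of their spectral projections. But Definition \ref{Def of irreducible tuple} says that no non-trivial projection commutes with $T_2$; hence every such spectral projection is $0$ or $I$, forcing both the real and imaginary parts of $A$ to be scalar. Consequently $A \in \mathbb{C}\,I$. Applying this conclusion entry-by-entry yields $P_{\alpha\beta} = c_{\alpha\beta} I$ for some scalars $c_{\alpha\beta}$. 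Letting $P_1 \in B(H_1)$ be the operator whose matrix in the basis $\{e_\alpha\}$ is $(c_{\alpha\beta})$, we obtain $P = P_1\otimes I$, and $P_1$ is an orthogonal projection because $P$ is.

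Part (b) follows from the identical argument with the roles of the two tensor factors interchanged, or equivalently by conjugating $P$ with the flip unitary $U: H_1\otimes H_2 \to H_2 \otimes H_1$ and applying part (a). \emph{Main obstacle.} The only non-routine input is promoting irreducibility, stated in Definition \ref{Def of irreducible tuple} as the absence of non-trivial commuting projections, to the algebraic identity $\{T_2\}'\cap\{T_2^*\}' = \mathbb{C}\, I$; this is precisely what the spectral-projection argument above supplies. Everything else amounts to bookkeeping with the block-matrix decomposition of operators on $H_1\otimes H_2$.
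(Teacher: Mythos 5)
Your proof is correct and follows essentially the same route as the paper: a block-matrix decomposition of $P$ over an orthonormal basis of $H_1$, the observation that each block together with its adjoint commutes with $T_2$, and the conclusion that each block is scalar by irreducibility, with part (b) obtained from the flip unitary. The only difference is that you spell out, via spectral projections of the real and imaginary parts, why $\{T_2\}'\cap\{T_2^*\}'=\mathbb{C}I$, a step the paper leaves as ``it can be seen''; this is a welcome added detail rather than a different argument.
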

\begin{proof}
(a) Assume that $\dim H_1 = N$, where $N$ can be $\infty$. Let $\{e_i : 1 \leq i \leq N\}$ be an orthonormal basis of $H_1$. Define $U : H_1 \otimes H_2 \to \bigoplus_{i = 1}^N H_2$ by $U(e_1 \otimes y) = (0,0,\ldots, y, 0, \ldots, 0),\,\,y \in H_2,$ where $y$ is in the $i$ - th position. Then $U$ is a unitary operator and $U(I \otimes T_2)U^* = \bigoplus_{i = 1}^N T_2$.

Let $\tilde{P} = UPU^*$. Suppose $( \!(\tilde{P}_{ij}) \!)_{i,j}^N$ is the matrix representation of $\tilde{P}$ in the Hilbert space $\oplus_{i = 1}^N H_2$ where $\tilde{P}_{ij}$ is an operator on $H_2$. Since $\tilde{P}$ is a projection, it is evident that $\tilde{P}_{ij}^* = \tilde{P}_{ji}$ for all $i, j$.

Since $P$ and $I \otimes T_2$ commute, the operators $\tilde{P}$ and $\oplus_{i = 1}^N T_2$ also commute. This implies that $\tilde{P}_{ij}$ commutes with $T_2$ for each $i, j$. Since $T_2$ is irreducible and both $\tilde{P}_{ij}^*$ ($= \tilde{P}_{ji}$) and $\tilde{P}_{ij}$ commute with $T_2$, it can be seen that $\tilde{P}_{ij} = \alpha_{ij}I$ for some $\alpha_{ij} \in \mathbb{C}$. 

Thus we have $\tilde{P} = ( \!(\alpha_{ij} I) \!)$. Let $P_1$ be the operator on $H_1$ whose matrix representation with respect to the orthonormal basis $\{e_i : 1 \leq i \leq N\}$ is $\left(\! (\alpha_{ij} )\! \right)$. Since $\tilde{P} = \left( \!(\alpha_{ij} I) \!\right)$ is a projection, it follows that $P_1$ is also a projection and $P = P_1 \otimes I$.

(b) Let $V : H_1 \otimes H_2 \to H_2 \otimes H_1$ be the unitary operator defined by $V(h_1 \otimes h_2) = h_2 \otimes h_1,\,h_1 \in H_1,\,h_2 \in H_2$. Conjugating $P$ and $T_1 \otimes I$ by $V$ and applying (a), the proof of (b) follows.
\end{proof}

\begin{prop}\label{irredicibility of the tuple in rank 2}
The $n$ - tuple $(M_{z_1}, M_{z_2}, \ldots , M_{z_n})$ of multiplication operators by the coordinate functions acting on the Hilbert space $\mathbb{A}^{(\l_1,\mu)}\otimes\mathbb{A}^{(\l_2)}\otimes\cdots \otimes \mathbb{A}^{(\l_{n})} \subseteq \mbox{Hol}(\mathbb D^n, \mathbb C^2)$ is irreducible. 
\end{prop}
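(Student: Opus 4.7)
The plan is to iteratively apply Lemma \ref{Projections commuting with an irr tensor with identity op} to peel off one tensor factor at a time, reducing the problem to the irreducibility of each of the single-variable operators $M^{(\lambda_1,\mu)}$ and $M^{(\lambda_i)}$, $2\leq i\leq n$. Recall that $M^{(\lambda)}$ is irreducible on $\mathbb{A}^{(\lambda)}$ for every $\lambda>0$, while the irreducibility of $M^{(\lambda_1,\mu)}$ on $\mathbb{A}^{(\lambda_1,\mu)}$ follows from the fact that, up to unitary equivalence, $M^{(\lambda_1,\mu)*}$ is an irreducible homogeneous operator in $\mathrm B_2(\mathbb D)$ (see \cite[Theorem 4.1]{HVBCDO}, \cite[Corollary 4.1]{MFHOCD}), and reducing subspaces for an operator coincide with those for its adjoint.

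First, I would suppose $P$ is a projection on $H := \mathbb{A}^{(\lambda_1,\mu)}\otimes\mathbb{A}^{(\lambda_2)}\otimes\cdots\otimes\mathbb{A}^{(\lambda_n)}$ commuting with every $M_{z_i}$, and rewrite $H=H'\otimes\mathbb{A}^{(\lambda_n)}$, where $H':=\mathbb{A}^{(\lambda_1,\mu)}\otimes\cdots\otimes\mathbb{A}^{(\lambda_{n-1})}$. Under this decomposition $M_{z_n}=I_{H'}\otimes M^{(\lambda_n)}$. Since $M^{(\lambda_n)}$ is irreducible, part (a) of Lemma \ref{Projections commuting with an irr tensor with identity op} yields a projection $P_{n-1}$ on $H'$ with $P=P_{n-1}\otimes I$.

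Next, I would observe that for each $i<n$ the operator $M_{z_i}$ acts trivially on the last factor, so it has the form $\widetilde{M}_{z_i}\otimes I$ for a unique operator $\widetilde{M}_{z_i}$ on $H'$. The identity $[P_{n-1}\otimes I,\widetilde{M}_{z_i}\otimes I]=[P_{n-1},\widetilde{M}_{z_i}]\otimes I=0$ then forces $P_{n-1}$ to commute with every $\widetilde{M}_{z_i}$, $1\leq i\leq n-1$. Repeating the same regrouping argument with $\widetilde{M}_{z_{n-1}}=I_{H''}\otimes M^{(\lambda_{n-1})}$, where $H''=\mathbb{A}^{(\lambda_1,\mu)}\otimes\cdots\otimes\mathbb{A}^{(\lambda_{n-2})}$, produces a projection $P_{n-2}$ on $H''$ with $P_{n-1}=P_{n-2}\otimes I$, and hence $P=P_{n-2}\otimes I\otimes I$. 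Continuing inductively, after $n-1$ such steps, we obtain a projection $P_{1}$ on $\mathbb{A}^{(\lambda_1,\mu)}$ commuting with $M^{(\lambda_1,\mu)}$, such that $P=P_{1}\otimes I\otimes\cdots\otimes I$. The irreducibility of $M^{(\lambda_1,\mu)}$ then gives $P_{1}\in\{0,I\}$, so $P\in\{0,I\}$, completing the proof.

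The only mildly subtle point is to verify at each stage that the peeled-off residual operators $\widetilde{M}_{z_i}$ are of the form required to invoke Lemma \ref{Projections commuting with an irr tensor with identity op} again (namely one of them must be $I$ tensored with an irreducible operator on the outermost remaining factor); this is immediate from the tensor-product structure of the multiplication operators. I do not anticipate a substantial obstacle beyond bookkeeping.
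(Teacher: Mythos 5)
Your proposal is correct and follows essentially the same route as the paper: both iteratively apply Lemma \ref{Projections commuting with an irr tensor with identity op} to peel off one tensor factor at a time, reducing to the irreducibility of $M^{(\lambda_1,\mu)}$ on $\mathbb{A}^{(\lambda_1,\mu)}$. The only differences are cosmetic (indexing of the intermediate projections and a slightly more explicit justification of why each residual projection commutes with the remaining multiplication operators).
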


\begin{proof}
Evidently, the $n$ - tuple $(M_{z_1}, M_{z_2}, \ldots , M_{z_n})$ is simultaneously unitarily equivalent to the tuple 
$$(M^{(\l_1,\mu)} \otimes I \otimes \dots \otimes I \otimes I,I\otimes M^{(\l_2)}\otimes I\otimes \dots\otimes I,\ldots,I \otimes \dots \otimes I \otimes M^{(\l_n)})$$
acting on $\mathbb{A}^{(\l_1,\mu)}\otimes\mathbb{A}^{(\l_2)}\otimes\cdots \otimes \mathbb{A}^{(\l_{n})}$.

Let $P$ be a projection which commutes with $I \otimes \dots \otimes I \otimes M^{(\l_n)}$. Then there exists a projection $P_1$ defined on $\mathbb{A}^{(\l_1,\mu)} \otimes\mathbb{A}^{(\l_2)}\otimes \dots \otimes \mathbb{A}^{(\l_{n-1})}$ such that $P = P_1 \otimes I$ by virtue of  Lemma \ref{Projections commuting with an irr tensor with identity op}. 
Now $P$ commutes with $I \otimes \dots \otimes I\otimes M^{(\l_{n-1})}\otimes I$ implying that $P_1$ commutes with $I \otimes \dots \otimes I\otimes M^{(\l_{n-1})}$. Again applying Lemma \ref{Projections commuting with an irr tensor with identity op}, we obtain a projection $P_2$ such that $P_1 = P_2\otimes  I$.

Continuing in this manner, we see that $P = P_{n-1} \otimes I \otimes \dots \otimes I,$ where $P_{n-1}$ is a projection  defined on $\mathbb{A}^{(\lambda_1, \mu)}$ and it commutes with $M^{(\lambda_1, \mu)}$. Since $M^{(\lambda_1, \mu)}$ is irreducible $P_{n-1}$ is either $0$ or $I$.  This proves that the given tuple is irreducible.
\end{proof}

Recall that $D_{\alpha}^+$ is the holomorphic Discrete series representation of $\mob$ on $\mathbb{A}^{(\alpha)}$, $\a>0$, and $D_{\lambda, \mu}^+$ is the multiplier representation of $\mob$ on $\mathbb{A}^{(\lambda, \mu)}$ given by the cocyle
$$J(\tilde{g}, z) = \begin{pmatrix}
    (g'(z))^{\frac{\lambda}{2}} & 0 \\
    \frac{g''(0)}{2(g'(0))^{\frac{3}{2}}}(g'(z))^{\frac{\lambda + 1}{2}} & (g'(z))^{\frac{\lambda+2}{2}}
\end{pmatrix}.$$
It is easy to see that the $n$ - tuple of multiplication by the coordinate functions $(M_{z_1}, \ldots , M_{z_n})$ acting on the Hilbert space $\mathbb{A}^{(\l_1,\mu)}\otimes\mathbb{A}^{(\l_2)}\otimes\cdots \otimes \mathbb{A}^{(\l_{n})}$ is homogeneous under the action of $\mob^n$ with associated representation $D_{\l_1,\mu}^+ \otimes D_{\l_2}^+ \otimes \cdots \otimes D_{\l_n}^+$ of $\mob^n$. However, it turns out that these $n$ - tuples of multiplication operators are not homogeneous with respect to $\aut(\D^n)$ as shown in the following theorem.

\begin{thm}\label{Product kernels are not homogeneous under the full aut gp}
Let $\l_i, \mu$ be positive real numbers where $i = 1, 2,\ldots, n$. The $n$ - tuple of multiplication by the coordinate functions $(M_{z_1}, M_{z_2}, \ldots , M_{z_n})$ acting on the Hilbert space $\mathbb{A}^{(\l_1,\mu)}\otimes\mathbb{A}^{(\l_2)}\otimes\cdots \otimes \mathbb{A}^{(\l_{n})}$
is not homogeneous under the action of $\Aut(\mathbb{D}^n)$.
\end{thm}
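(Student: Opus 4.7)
The plan is to derive a contradiction from Lemma \ref{curvature at 0 is diagonal}. Suppose, for contradiction, that the $n$-tuple $(M_{z_1},\ldots,M_{z_n})$ on $\mathbb{A}^{(\lambda_1,\mu)}\otimes \mathbb{A}^{(\lambda_2)}\otimes\cdots\otimes\mathbb{A}^{(\lambda_n)}$ is homogeneous with respect to $\aut(\D^n)$. Then its reproducing kernel
\[
K(\bz,\bw)=K^{(\lambda_1,\mu)}(z_1,w_1)\prod_{j=2}^n (1-z_j\bar w_j)^{-\lambda_j}
\]
is (up to a factor $h(\bz)\overline{h(\bw)}$, which does not affect the curvature) quasi-invariant under the universal cover of $\aut(\D^n)$, and the second assertion of Lemma \ref{curvature at 0 is diagonal} forces the diagonal blocks $\mathcal{K}^{11}(0),\ldots,\mathcal{K}^{nn}(0)$ of the curvature at the origin to be mutually similar.

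Next, I would exploit the tensor-product structure of $K$ to evaluate these blocks. For $j\geq 2$, the $j$-th factor of $K$ is the scalar kernel $(1-z_j\bar w_j)^{-\lambda_j}$, so the matrix-valued factor $K^{(\lambda_1,\mu)}$ and the remaining scalar factors cancel in $K(\bw,\bw)^{-1}\overline{\partial_j}K(\bw,\bw)$, leaving $\lambda_j w_j(1-|w_j|^2)^{-1}\,I_{2\times 2}$. Applying $\partial_j$ and evaluating at $\bw=0$ gives $\mathcal{K}^{jj}(0)=\lambda_j I_{2\times 2}$, a scalar matrix.

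For $j=1$, the same cancellation reduces $\mathcal{K}^{11}(0)$ to the curvature at $0$ of the one-variable rank-two kernel $K^{(\lambda_1,\mu)}$. I would compute this via
\[
\mathcal{K}^{11}(0)=\bigl[K^{-1}\partial\bar\partial K-K^{-1}(\partial K)K^{-1}(\bar\partial K)\bigr]\big|_{z_1=0},
\]
using the values (obtained by differentiating the explicit formula for $K^{(\lambda_1,\mu)}$) $K^{(\lambda_1,\mu)}(0,0)=\text{diag}(1,c)$ with $c=1/\lambda_1+\mu$, $\partial K^{(\lambda_1,\mu)}(0,0)=\bigl(\begin{smallmatrix}0 & 1\\ 0 & 0\end{smallmatrix}\bigr)$, $\bar\partial K^{(\lambda_1,\mu)}(0,0)=\bigl(\begin{smallmatrix}0 & 0\\ 1 & 0\end{smallmatrix}\bigr)$ and $\partial\bar\partial K^{(\lambda_1,\mu)}(0,0)=\text{diag}(\lambda_1,\,1+(\lambda_1+2)c)$. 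Combining these yields $\mathcal{K}^{11}(0)=\text{diag}(\lambda_1-1/c,\,\lambda_1+2+1/c)$, whose two diagonal entries differ by $2+2/c>0$ because $c>0$. Hence $\mathcal{K}^{11}(0)$ is non-scalar.

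A scalar matrix is similar only to itself, so $\mathcal{K}^{11}(0)$ cannot be similar to $\mathcal{K}^{22}(0)=\lambda_2 I_{2\times 2}$, contradicting Lemma \ref{curvature at 0 is diagonal}. The only slightly delicate step will be the bookkeeping in the one-variable curvature computation; everything else follows formally from the tensor structure and the lemma.
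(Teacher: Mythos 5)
Your proposal is correct and follows essentially the same route as the paper: invoke the second assertion of Lemma \ref{curvature at 0 is diagonal} to force the diagonal curvature blocks at the origin to be mutually similar, compute $\mathcal{K}^{jj}(0)=\lambda_j I_2$ for $j\geq 2$ from the tensor structure, and show $\mathcal{K}^{11}(0)$ is a non-scalar diagonal matrix, giving the contradiction. Your explicit value $\mathcal{K}^{11}(0)=\mathrm{diag}\bigl(\lambda_1-1/c,\ \lambda_1+2+1/c\bigr)$ with $c=1/\lambda_1+\mu$ is the correct one for the kernel as stated (the paper's printed $(1/\lambda_1-\mu^2)^{-1}$ appears to come from a different parametrization), and the gap $2+2/c>0$ cleanly justifies non-similarity.
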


\begin{proof}
The reproducing kernel of the Hilbert space $\mathbb{A}^{(\l_1,\mu)}\otimes\mathbb{A}^{(\l_2)}\otimes\cdots \otimes \mathbb{A}^{(\l_{n})}$ is
$$K^{(\bl, \mu)}(\bz, \bw) = \begin{pmatrix}
    \frac{1}{(1 - z_1 \bar{w}_1)^{\lambda_1}} & \frac{z_1}{(1 - z_1 \bar{w}_1)^{\lambda_1+1}} \\
    \frac{\bar{w}_1}{(1 - z_1 \bar{w}_1)^{\lambda_1+1}} & \frac{\frac{1}{\lambda_1} + \mu + z_1\bar{w}_1}{(1 - z_1 \bar{w}_1)^{\lambda_1+2}}
\end{pmatrix}\displaystyle \prod_{i = 2}^{n} \frac{1}{(1 - z_i \bar{w_i})^{\l_{i}}} $$ where $\bl=(\l_1,\hdots,\l_n)$.
Since the $n$ - tuple of multiplication by coordinate functions is homogeneous, for each $\tilde{g} \in  \widetilde{\Aut(\mathbb{D}^n)}$, there exists a holomorphic map $J_{\tilde{g}} : \mathbb{D}^n \rightarrow \text{GL}(2, \mathbb{C})$ such that $K^{(\bl, \mu)}$ is quasi-invariant with respect to $J$. It follows from Lemma \ref{curvature at 0 is diagonal} that $\mathcal{K}^{(\bl,\mu)}_{11}(0)$ and $\mathcal{K}^{(\bl,\mu)}_{nn}(0)$ are similar. A straightforward computation shows that
$$\mathcal{K}^{(\bl,\mu)}_{11}(0) = \begin{pmatrix}
    \lambda_1 - \left(\frac{1}{\lambda_1}-\mu^2 \right)^{-1} & 0\\
    0 & \lambda_1 + 2 + \left(\frac{1}{\lambda_1}-\mu^2 \right)^{-1}
\end{pmatrix}\,\, \mbox{and}\,\,\mathcal{K}^{(\bl,\mu)}_{nn}(0) = \begin{pmatrix}
    \l_n & 0\\
    0 & \l_n
\end{pmatrix} .$$
This implies that $\mathcal{K}^{(\bl, \mu)}_{11}(0)$ and $\mathcal{K}^{(\bl, \mu)}_{nn}(0)$ can not be similar. Consequently, the $n$ - tuple $(M_{z_1}, M_{z_2}, \ldots , M_{z_n})$
can not be homogeneous under the action of $\Aut(\mathbb{D}^n)$.
\end{proof}

So far, we have obtained a class of irreducible homogeneous tuples in $\mathrm B_2(\mathbb{D}^n)$ under the action of $\mob^n$. As a result,  the {\h}s associated to these $n$ - tuples are  irreducible. Moreover, from Theorem \ref{grpactvec} it is clear that these vector bundles admit an action of the universal covering group $\tilde{\mathsf G}$ of $\mathsf G$ where $\mathsf G$ is the group $\mob^n$. We complete this section by showing that any irreducible homogeneous tuples in $\mathrm B_2(\mathbb{D}^n)$ under the action of $\mob^n$ has to be unitarily equivalent to some element in this class. In this regard, we first prove the following lemma which will be used to classify all cocycles associated to irreducible {\h}s over $\D^n$ of rank $2$.

\begin{lem}\label{Description of all indecomposable rep}
Suppose $\rho : \mathfrak{b}^n \rightarrow \mathfrak{gl}(2,\C)$ is a two dimensional indecomposable representation such that $\rho (h_i)$ is diagonalizable for all $i$. Then $\rho|_{\mathfrak{b}^1}$ is indecomposable where $\mathfrak{b}^1=\mathfrak{b}\oplus\{0\}\oplus\cdots\oplus\{0\}$. Furthermore, $\rho (h_j) = \alpha_j I_2$ and  $\rho (y_j) = 0$ for all $j=2,\hdots,n$ where $\alpha_j \in \mathbb{C}$.
\end{lem}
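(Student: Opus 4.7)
The plan is to pick a common eigenbasis for the commuting diagonalizable operators $\rho(h_1),\dots,\rho(h_n)$ on $\mathbb C^2$ and then read off the possible nonzero entries of the $\rho(y_i)$ from the weight-shift relation implied by $[h_k,y_i]=-\delta_{ki}y_i$. First I would fix a basis $\{v_1,v_2\}$ simultaneously diagonalizing all $\rho(h_i)$ (possible because the $h_i$ sit in pairwise commuting summands and are diagonalizable by hypothesis), assign joint weights $\mu_j=(\mu_{1j},\dots,\mu_{nj})$ via $\rho(h_i)v_j=\mu_{ij}v_j$, and record the structural consequence that $\rho(y_i)v_j$ is either zero or a joint eigenvector of weight $\mu_j-\varepsilon_i$, hence lies in $\mathbb C v_k$ whenever $\mu_k=\mu_j-\varepsilon_i$.

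Next I would dispose of the case $\mu_1=\mu_2$. Here every $\mu_j-\varepsilon_i$ lies outside the weight set $\{\mu_1,\mu_2\}$, so all $\rho(y_i)v_j$ vanish; consequently $\mathbb C v_1$ and $\mathbb C v_2$ are both $\rho$-invariant, contradicting indecomposability. Thus $\mu_1\neq\mu_2$.

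Assuming $\mu_1\neq\mu_2$, I would show that there is at most one index $i_0$ with $\mu_2=\mu_1-\varepsilon_{i_0}$, and only $\rho(y_{i_0})$ can have nonzero image on $v_1$. Turning to the other column, $\rho(y_i)v_2$ has weight $\mu_2-\varepsilon_i=\mu_1-\varepsilon_{i_0}-\varepsilon_i$, which is never $\mu_1$ or $\mu_2$, so $\rho(y_i)v_2=0$ for all $i$. Indecomposability then forces $\rho(y_{i_0})v_1\neq 0$, since otherwise every $\rho(y_i)$ would vanish and the diagonal basis would split $\rho$. After relabeling the summands so that $i_0=1$ (harmless, as the $n$ copies of $\mathfrak b$ play symmetric roles in $\mathfrak b^n$), the fact that $\mu_1$ and $\mu_2$ agree in every coordinate except the first gives $\rho(h_j)=\mu_{j1}I_2=:\alpha_j I_2$ and $\rho(y_j)=0$ for $j\geq 2$, which is the asserted scalar form.

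Finally, $\rho|_{\mathfrak b^1}$ consists of $\rho(h_1)$, whose two eigenvalues $\mu_{11}$ and $\mu_{12}=\mu_{11}-1$ are distinct, together with a nonzero nilpotent $\rho(y_1)$ sending $v_1$ to a nonzero multiple of $v_2$; this is precisely the unique indecomposable two-dimensional representation of $\mathfrak b$, completing the proof. I do not anticipate a serious obstacle in carrying this out; the only delicate points are the weight bookkeeping, which must simultaneously exclude the coincidence $\mu_1=\mu_2$, single out the unique shift direction $i_0$, and invoke indecomposability to prevent all $\rho(y_i)$ from vanishing.
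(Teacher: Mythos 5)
Your proof is correct and follows essentially the same route as the paper: simultaneous diagonalization of the commuting $\rho(h_i)$, the shift relations $[\rho(h_i),\rho(y_j)]=-\delta_{ij}\rho(y_j)$ to pin down where each $\rho(y_j)$ can act, and indecomposability to force exactly one $\rho(y_{i_0})$ to be nonzero. The paper phrases these constraints as direct matrix-commutator computations (e.g.\ $\rho(y_j)$ commutes with $\rho(h_1)$, which has distinct eigenvalues, hence $\rho(y_j)$ is diagonal, hence zero), while you phrase them as joint-weight bookkeeping; the substance is identical.
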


\begin{proof}
We begin by pointing out that $\rho(h_1)$, $\rho(h_2), \ldots, \rho(h_n)$ are simultaneously diagonalizable by means of the fact that $h_i$ and $h_j$ commute, for $i\neq j$, $i,j=1,\hdots,n$. Since $\rho$ is indecomposable there exists $1\leq i\leq n$ such that $\rho(y_i)\neq 0$. Without loss of generality, let us assume that $i=1$. It follows from the identity $[\rho(h_1),\rho(y_1)]=-\rho(y_1)$ that $\rho(h_1)$ has distinct eigenvalues. Consequently, $\rho(y_j)=0$, $j=2,\hdots,n$ since $[\rho(h_1),\rho(y_j)]=0$ and $[\rho(h_j),\rho(y_j)]=-\rho(y_j)$, for $j=2,\hdots,n$. Thus $\rho|_{\mathfrak{b}^1}$ is indecomposable. Furthermore, it follows from $[\rho(h_j),\rho(y_1)]=0$ that $\rho (h_j) = \alpha_j I_2$ for some $\alpha_j \in \mathbb{C}$ for all $j=2,\hdots,n$.
\end{proof}

The following proposition has the description of all the cocycles of the group $\widetilde{\text{SU}(1,1)^n}$, which correspond to irreducible {\h} over $\mathbb D^n$ of rank $2$.

\begin{prop}\label{Description of all cocyle}
Let $J : \widetilde{\text{SU}(1,1)^n} \times \mathbb{D}^n \rightarrow \text{GL}(2, \mathbb{C})$ be a cocycle such that $J(\tilde{k}, 0)$ is diagonal for all $\tilde{k} \in \tilde{\mathbb{K}}$. Then there exists $\l_i\geq 0$, $i = 1, 2,\ldots, n$ and $\lambda_1 \neq 0$ such that 
$$J(\tilde{g}, \bz) = \begin{pmatrix}
\left(g_1^{'}(z_1)\right)^{\lambda_1} & 0\\
\frac{g_{1}^{''}(0)}{2\left(g_{1}^{'}(0)\right)^{\frac{3}{2}}}\left(g_1^{'}(z_1)\right)^{\lambda_1 + \frac{1}{2}} & \left(g_1^{'}(z_1)\right)^{\frac{\lambda_1 + 2}{2}}
\end{pmatrix}\displaystyle \prod_{i=2}^{n} g_{i}^{'}(z_i)^{\l_i} $$
where $\tilde{g}\in\widetilde{\text{SU}(1,1)^n}$ with $p(\tilde{g})= g = (g_1, g_2,\ldots, g_n) \in \text{SU}(1,1)^n$ and $\bz = (z_1, z_2,\ldots, z_n) \in \mathbb{D}^n$. 
\end{prop}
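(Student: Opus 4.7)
The plan is to invoke the general construction of cocycles from Section~\ref{Section 4}, reduce via Lemma~\ref{Description of all indecomposable rep} to the one-factor case, and then carry out an explicit exponential computation.

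First, by Theorem~\ref{construction of cocycle} and the explicit formula in the subsequent corollary, every cocycle $J : \widetilde{\mathrm{SU}(1,1)^n} \times \mathbb{D}^n \to \mathrm{GL}(2,\mathbb{C})$ on this product arises from a two-dimensional representation $\rho$ of $\mathfrak{b}^n$ with each $\rho(h_i)$ diagonalizable, tensored with the one-dimensional twist $\sigma$ of the proof of Theorem~\ref{construction of cocycle}. The hypothesis that $J(\tilde{k}, 0)$ is diagonal for every $\tilde{k} \in \tilde{\mathbb{K}}$ is the statement that the simultaneous eigenbasis of $\rho(h_1), \ldots, \rho(h_n)$ coincides with the chosen basis of $\mathbb{C}^2$.

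Second, I apply Lemma~\ref{Description of all indecomposable rep}. After a permutation of the $n$ factors (taken, without loss of generality, to be the identity with the first slot as the distinguished one), it forces $\rho(h_j) = \alpha_j I_2$ and $\rho(y_j) = 0$ for all $j \geq 2$, while the restriction $\rho|_{\mathfrak{b}^1}$ is an indecomposable two-dimensional representation of $\mathfrak{b}$. For such a representation, and after absorbing a one-dimensional twist into $\sigma$, one may write the remaining indecomposable piece in the normal form
$$\rho^0(h_1) = \begin{pmatrix} 0 & 0 \\ 0 & -1 \end{pmatrix}, \qquad \rho^0(y_1) = \begin{pmatrix} 0 & 0 \\ 1 & 0 \end{pmatrix}.$$

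Third, I substitute this explicit $\rho^0$ into the formula \eqref{cocyclecomp3}. Since $\rho^0(y_1)^2 = 0$, the exponential $\exp\!\bigl(\tfrac{-c_1}{c_1 z_1 + d_1}\rho^0(y_1)\bigr)$ collapses to $I + \tfrac{-c_1}{c_1 z_1 + d_1}\rho^0(y_1)$, and since $\rho^0(h_1)$ is diagonal, $\exp\!\bigl((2\log(c_1 z_1 + d_1))\rho^0(h_1)\bigr) = \mathrm{diag}\bigl(1,\,(c_1 z_1 + d_1)^{-2}\bigr)$. Their product is lower triangular with $(1,1)$-entry $1$, $(2,2)$-entry $g_1'(z_1)$, and the predicted shape in the $(2,1)$-entry. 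Multiplication by the scalar twists $\prod_i g_i'(z_i)^{\lambda_i}$ coming from $\sigma$ and from the $j \geq 2$ slots yields the stated cocycle once one substitutes $g_1'(0) = d_1^{-2}$ and $g_1''(0) = -2c_1 d_1^{-3}$ to rewrite $c_1$ in terms of $g_1''(0)/(2 g_1'(0)^{3/2})$.

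The main obstacle is the half-integer bookkeeping of exponents in the first factor: the three non-zero entries have exponents $\lambda_1$, $\lambda_1 + \tfrac{1}{2}$ and $(\lambda_1 + 2)/2$, which emerge from a careful combination of the character twist $\sigma$, the nilpotent piece produced by $\rho^0(y_1)$, and the diagonal piece $\exp(2\log(c_1 z_1 + d_1)\rho^0(h_1))$, together with the standard choice of branch for $g_1'(z_1)^{1/2}$ on the universal cover. The side condition $\lambda_1 \neq 0$ then records that we are in the genuinely indecomposable case: for a completely reducible rank-$2$ representation the identity $[\rho(h_1),\rho(y_1)] = -\rho(y_1)$ forces $\rho(y_1)=0$, and the cocycle becomes diagonal rather than of the stated lower-triangular form.
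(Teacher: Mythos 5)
Your proposal is correct and takes essentially the same route as the paper: reduce via Lemma~\ref{Description of all indecomposable rep} to a two-dimensional indecomposable representation supported on the first factor of $\mathfrak{b}^n$ (with $\rho(h_j)=\alpha_j I_2$, $\rho(y_j)=0$ for $j\geq 2$), put it in the standard nilpotent normal form, and substitute into \eqref{cocyclecomp3}. The only difference is cosmetic --- the paper leaves the final exponential computation implicit, which you carry out explicitly, and writes $\rho(h_1)=\mathrm{diag}(-\lambda_1,-\lambda_1-1)$ rather than splitting off the character twist $\sigma$.
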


\begin{proof}
Suppose $\rho$ is a two dimensional indecomposable representation of $\mathfrak{b}^n$. Applying  Lemma \ref{Description of all indecomposable rep}, we assume that there exists $\l_i\geq 0$, $i = 1, 2,\ldots, n$ and $\lambda_1 \neq 0$ such that 
$$\rho(h_1) = \begin{pmatrix}
-\lambda_1 & 0\\
0 & -\lambda_1 - 1 
\end{pmatrix},\,\, \rho(y_1) = \begin{pmatrix}
0 & 0\\
1 & 0
\end{pmatrix},~\text{and}~\rho(h_i) = \l_i I_2,\,\, \rho(y_i) = 0,$$ for $i = 2,\ldots, n$. Let $g = \left(\!\!\left(\begin{smallmatrix}
a_i & b_i\\
c_i & d_i
\end{smallmatrix}\right)\!\!\right)_{i=1}^{n} \in \text{SU}(1,1)^n$.
Now the proof follows by substituting the values of $\rho(h_i)$ and $\rho(y_i)$ in \eqref{cocyclecomp3}.
\end{proof}

We denote the cocyle 
$$J(\tilde{g}, \bz) = \begin{pmatrix}
\left(g_1^{'}(z_1)\right)^{\lambda_1} & 0\\
\frac{g_{1}^{''}(0)}{2\left(g_{1}^{'}(0)\right)^{\frac{3}{2}}}\left(g_1^{'}(z_1)\right)^{\lambda_1 + \frac{1}{2}} & \left(g_1^{'}(z_1)\right)^{\frac{\lambda_1 + 2}{2}}
\end{pmatrix}\displaystyle \prod_{i=2}^{n} g_{i}^{'}(z_i)^{\l_i} $$
by $J_{\bl}$ where $\bl = (\l_1, \l_2,\ldots, \l_{n})$.  We find possible values of $\bl$ for which there exists a diagonal matrix $K(0,0)$ such that 
\begin{enumerate}
\item[(a)] the polarization of $K(z,z)$, defined by the equation \eqref{cocyclevsker} is a quasi-invariant kernel with respect to $J_{\bl}$ and 
\item[(b)] the $n$ - tuple of multiplication operators is in $\mathrm B_2(\mathbb D^n).$ 
\end{enumerate}
Suppose that there exists a positive diagonal matrix $K(0,0)$ such that the polarization of $K(z,z)$, defined by the equation \eqref{cocyclevsker} is a quasi-invariant kernel with respect to $J_{\bl}$ under the action of the group $\mob^n$. The function $(1 - z_i \bar{w}_i)^{-\l_i}$ then defines a positive definite kernel on $\mathbb{D}$, for each $i = 2,\ldots, n$ implying that $\l_i$ must be positive for each $i$. Also, it can be seen that the polarization of 
$$J_{\bl}((\tilde{g_z},\tilde{e},\ldots,\tilde{e});(z,0,\ldots,0)) K(0,0) J_{\bl}((\tilde{g_z},\tilde{e},\ldots,\tilde{e});(z,0,\ldots,0))^{*}$$
is a positive definite kernel on $\mathbb{D}$ where $\tilde{e}$ is the identity element of $\widetilde{\mob}$ and $\tilde{g}_z\in\widetilde{\mob}$ such that $p(\tilde{g}_z)=g_z$ maps $z$ to $0$. It has been shown in \cite[Section 4]{MFHOCD} that the polarization of 
$$J_{\bl}((\tilde{g_z},\tilde{e},\ldots,\tilde{e});(z,0,\ldots,0)) K(0,0) J_{\bl}((\tilde{g_z},\tilde{e},\ldots,\tilde{e});(z,0,\ldots,0))^{*}$$
is a positive definite kernel on $\mathbb{D}$, only when $\lambda_1$ is positive and $K(0,0)$ is of the form 
$$K(0,0) = \begin{pmatrix}
1 & 0\\
0 & \frac{1}{\lambda_1} + \mu
\end{pmatrix}$$
for some positive real number $\mu$. Thus we have that 
\beq\label{rank two charaterization}K^{(\bl, \mu)}(\bz, \bw)~ =~  \begin{pmatrix}
    \frac{1}{(1 - z_1 \bar{w_1})^{\lambda_1}} & \frac{z_1}{(1 - z_1 \bar{w_1})^{\lambda_1+1}} \\
    \frac{\bar{w_1}}{(1 - z_1 \bar{w_1})^{\lambda_1+1}} & \frac{\frac{1}{\lambda_1} + \mu + z_1\bar{w_1}}{(1 - z_1 \bar{w_1})^{\lambda_1+2}}
\end{pmatrix} \displaystyle \prod_{i = 2}^{n} \frac{1}{(1 - z_i \bar{w_i})^{\l_{i}}} \eeq
is the only kernel on $\mathbb{D}^n$ such that the $n$ - tuple of multiplication operators is in $\mathrm B_2(\mathbb{D}^n)$ and homogeneous under the action of $\mob^n$ where $\bl = (\l_1, \l_2,\ldots, \l_{n})$ is a tuple of positive real number and $\mu > 0$. Thus we have proved the following theorem.
\begin{thm}\label{homogeneous rank 2 bundle}
Each $\mbox{M\"{o}b}^n$ homogeneous tuple of operators in $\mathrm B_2(\D^n)$ is unitarily equivalent with the adjoint of the tuple of multiplication operators on the reproducing kernel Hilbert space $\mathbb{A}^{(\l_1,\mu)}\otimes\mathbb{A}^{(\l_2)}\otimes\cdots \otimes \mathbb{A}^{(\l_{n})}$ determined by the kernel $K^{(\bl, \mu)}$, where $\bl = (\lambda_1, \lambda_2, \ldots, \lambda_n)$ is a tuple of positive real numbers and $\mu > 0$.  
\end{thm}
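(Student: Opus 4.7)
My plan is to synthesize the ingredients already developed in this section: the description of rank-$2$ cocycles (Proposition \ref{Description of all cocyle}), the constraints on $K(0,0)$ coming from invariance under the isotropy (Equation \eqref{K(0,0)}), and the one-variable classification result from \cite[Section 4]{MFHOCD}. The existence direction is handled by Proposition \ref{irredicibility of the tuple in rank 2} together with the explicit multiplier $D_{\lambda_1,\mu}^+\otimes D_{\lambda_2}^+\otimes\cdots\otimes D_{\lambda_n}^+$, so the substance is the uniqueness direction.

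First, given an irreducible homogeneous tuple $\T\in\mathrm B_2(\D^n)$, I invoke \cite[Theorem 3.1]{OICHO} to replace $\T$ by the adjoint of the multiplication tuple $\M=(M_{z_1},\ldots,M_{z_n})$ on a reproducing kernel Hilbert space $\H_K$, where $K:\D^n\times\D^n\to\mathrm{M}(2,\mathbb C)$ is quasi-invariant with respect to a holomorphic cocycle $J:\widetilde{\mob}^n\times\D^n\to\mathrm{GL}(2,\mathbb C)$. Irreducibility of $\T$ translates into irreducibility of the associated homogeneous hermitian holomorphic vector bundle, which by the discussion leading to Theorem \ref{construction of cocycle} means that the defining representation $\rho:\mathfrak b^n\to\mathfrak{gl}(2,\mathbb C)$ is indecomposable. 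Diagonalizability of each $\rho(h_i)$ is automatic from the hermitian structure at $0$ (cf.\ Lemma \ref{curvature at 0 is diagonal}).

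Next, I apply Lemma \ref{Description of all indecomposable rep}: up to a permutation of the coordinates of $\D^n$ (which does not affect the statement), only one of the $\rho(y_i)$ can be non-zero, say $\rho(y_1)\ne 0$, and then $\rho(h_j)=\alpha_j I_2$, $\rho(y_j)=0$ for $j\ge 2$, while $\rho|_{\mathfrak b^1}$ is a $2$-dimensional indecomposable representation of $\mathfrak b$. The unique such representation with diagonal $\rho(h)$ is the one with eigenvalues $-\lambda_1,-\lambda_1-1$ and nilpotent $\rho(y)$ sending the top eigenvector to the bottom one. Plugging these matrices into formula \eqref{cocyclecomp3} of the preceding section yields, via Proposition \ref{Description of all cocyle}, that
\[
J(\tilde g,\bz)=J_{\bl}(\tilde g,\bz),\qquad \bl=(\lambda_1,\ldots,\lambda_n),
\]
with $\lambda_1\ne 0$ and $\lambda_i\in\mathbb R$ for $i\ge 2$.

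Then I recover $K$ from the cocycle. By \eqref{K(0,0)}, $K(0,0)$ is positive definite and $J(\tilde k,0)$-invariant for all $\tilde k$ in the isotropy $\tilde{\mathbb K}$; since $J_{\bl}(\tilde k,0)$ is diagonal with distinct diagonal entries coming from the two eigenvalues $-\lambda_1,-\lambda_1-1$, $K(0,0)$ itself must be diagonal. Restricting \eqref{cocyclevsker} to the slice $\bz=(z,0,\ldots,0)$ reduces the problem to a one-variable problem for the rank-$2$ cocycle on $\widetilde{\mob}$; the analysis in \cite[Section 4]{MFHOCD} then forces $\lambda_1>0$ and
\[
K(0,0)=\begin{pmatrix}1 & 0\\ 0 & \tfrac{1}{\lambda_1}+\mu\end{pmatrix}
\]
for some $\mu>0$. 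Finally, for each $i\ge 2$, restricting to the slice where only the $i$-th coordinate varies shows that $(1-z_i\bar w_i)^{-\lambda_i}$ must be a positive definite kernel on $\D$, forcing $\lambda_i>0$. Assembling the pieces via \eqref{cocyclevsker} produces exactly the kernel $K^{(\bl,\mu)}$ of \eqref{rank two charaterization}, and the identification of $\H_{K^{(\bl,\mu)}}$ with $\mathbb A^{(\lambda_1,\mu)}\otimes\mathbb A^{(\lambda_2)}\otimes\cdots\otimes\mathbb A^{(\lambda_n)}$ is automatic from the tensor-product form of the kernel.

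The step I expect to be the main obstacle is the transition from the algebraic classification of indecomposable representations to the analytic statement that the only admissible $K(0,0)$ is the one displayed above: this requires combining the transformation rule \eqref{cocyclevsker} with the positive-definiteness of $K$ along each coordinate slice and citing the correct one-variable result of \cite{MFHOCD}. Boundedness of the multiplication tuple and membership in $\mathrm B_2(\D^n)$ then follow from the tensor product structure, since they hold for each factor by the known one-variable theory.
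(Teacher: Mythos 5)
Your proposal is correct and follows essentially the same route as the paper: reduce to a quasi-invariant kernel via \cite[Theorem 3.1]{OICHO}, classify the rank-$2$ cocycles through the indecomposable representations of $\mathfrak b^n$ (Lemma \ref{Description of all indecomposable rep} and Proposition \ref{Description of all cocyle}), then pin down $K(0,0)$ and the positivity of the $\lambda_i$ by restricting to coordinate slices and invoking the one-variable analysis of \cite[Section 4]{MFHOCD}. The only cosmetic difference is that you make explicit the harmless coordinate permutation placing the rank-$2$ factor in the first slot, which the paper leaves implicit.
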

As a consequence of Theorem \ref{Product kernels are not homogeneous under the full aut gp}, it follows that there is no rank two quasi-invariant kernel on $\mathbb{D}^n$ for the action of $\Aut(\mathbb{D}^n)$.

\section{Irreducible Homogeneous tuples in $\mathrm B_3(\mathbb D^n)$}\label{Section 7}
In this section, we study homogeneous operator tuples in $\mathrm B_3(\D^n)$. Note that a natural class of homogeneous $n$ - tuples of operators in $\mathrm B_3(\D^n)$ can be obtained by taking tensor product of homogeneous operators in $\mathrm B_3(\D)$ with  those in $\mathrm B_1(\D^{n-1})$. Thus this class of irreducible homogeneous $n$ - tuples of operators in $\mathrm B_3(\D^n)$ consists of the adjoint of the multiplication operators by coordinate functions on the \rkhs $\H_{\hat{K}}$ with the reproducing kernel 
\beq\label{eqn14}
\hat{K}(\bz,\bw):=\mathbf{B}^{(\l_1,\bmu)}(z_1,w_1)\cdot\prod_{i=2}^n(1-z_i\ov{w}_i)^{-\l_i},\eeq
where $\mathbf{B}^{(\l_1,\bmu)}(z_1,w_1)$ is the reproducing kernel on $\D$ obtained in \cite[Section 3]{HOHS}, $\l_1 >0$ and $\bmu=(1,\mu_1,\mu_2)$ with $\mu_1,\mu_2>0$. These are demonstrably $\widetilde{\mbox{M\"{o}b}^n}$ - homogeneous. However, this section is devoted in finding a large class of $n$ - tuples of operators  in $\mathrm B_3(\D^n)$, which are not of this form. In the following subsections, two continuous family of Hilbert spaces, namely \textbf{Type I} and \textbf{Type II} Hilbert spaces, are constructed explicitly by prescribing two types of $\G$ maps. Among them, although $\G^{(1)}$ map $-$ associated to \textbf{Type I} Hilbert spaces $-$ is the several variables analogue of that in \cite{HOHS}, $\G^{(2)}$ map $-$ associated to \textbf{Type II} Hilbert spaces $-$ is new.

\subsection{Construction of \textbf{Type I} Hilbert spaces} For $\l>0$, let $\A^{(\l)}$ be the Hilbert space of holomorphic functions on the open unit disc $\D$ with the reproducing kernel $(1-z\ov{w})^{-\l}$. 

\textbf{(I)}  Let $\l_1,\hdots,\l_n>0$,  $i=1,2,3$ and $\bl=(\l_1,\hdots,\l_n)$. Consider the Hilbert spaces $\A_i^{(\bl)}$ of holomorphic functions on $\D^n$ which are, by definition,
$$\A_i^{(\bl)}:=\otimes_{j=1}^n\A^{(\l_j+2\delta_{i-1,j})}$$ where $\delta_{i-1,j}$ is $1$ if $i-1=j$ and $0$ otherwise. Let $\Gamma^{(1)}_i$, $i=1,2,3$, be the linear map on the Hilbert space $\A_i^{(\bl)}$ taking values in Hol$(\D^n,\C^3)$ defined as follows:
$$\G^{(1)}_1(\bf_1)=\left(
    \begin{array}{c}
       \bf_1 \\
       \l_1^{-1}\del_1\bf_1 \\
     \l_2^{-1}\del_2\bf_1 \\
    \end{array}
    \right),~~~
\G^{(1)}_2(\bf_2)=\left(
    \begin{array}{c}
       0 \\
       \bf_2 \\
       0  \\
    \end{array}
    \right),~~~\text{and}~~~
\G^{(1)}_3(\bf_3)=\left(
    \begin{array}{c}
       0 \\
       0 \\
       \bf_3  \\
    \end{array}
    \right)    
$$
where $\bf_i\in\A_i^{(\bl)}$. Note that each $\G^{(1)}_i$, $i=1,2,3$, is a one to one linear map and it induces an inner product on $\G^{(1)}_i(\A_i^{(\bl)})$, namely,
$$\<\G^{(1)}_i(\bf_i),\G^{(1)}_i(\bg_i)\>:=\<\bf_i,\bg_i\>_{\A_i^{(\bl)}}, \,\,i=1,2,3,$$ for all $\bf_i,\bg_i\in\A^{(\bl)}_i$. Then by definition, $\G_i$, $i=1,2,3$, is an isometry. 

Thus it gives rise to the Hilbert spaces $\G^{(1)}_i(\A_i^{(\bl)})$ to be denoted by $\bA_i^{(\bl)}\subset \text{Hol}(\D^n,\C^3)$. Furthermore, the point evaluation, $\G^{(1)}_i(\bf_i)\mapsto\G^{(1)}_i(\bf_i)(\bz)$, for $\bz\in\D^n$ and $i=1,2,3$, are continuous which makes $\bA_i^{(\bl)}$ a \rkhs with the reproducing kernel, say $K_i^{(\bl)}$.

Let us now consider the Hilbert space $\A^{(\bl)}(\D^n):=\oplus_{i=1}^3\A_i^{(\bl)}$ and define the linear map $\G^{(1)}:\A^{(\bl)}(\D^n)\longrightarrow \text{Hol}(\D^n,\C^3)$ as follows:
$$\G^{(1)}(\bf_1,\bf_2,\bf_3):= \sum_{i=1}^3 \G^{(1)}_i(\bf_i).$$ We note that $\G^{(1)}$ is also an one-to-one linear map onto it's image. For any $\mu_1,\mu_2>0$, define an inner product on $\G^{(1)}(\A^{(\bl)}(\D^n))$ making it a Hilbert space, $\bA^{(\bl,\bmu)}(\D^n)$, in the following manner:
$$\<\G^{(1)}(\bf),\G^{(1)}(\bg)\>:=\<\G^{(1)}_1(\bf_1),\G^{(1)}_1(\bg_1)\>+\mu_1^2\<\G^{(1)}_2(\bf_2),\G^{(1)}_2(\bg_2)\>+\mu_2^2\<\G^{(1)}_3(\bf_3),\G^{(1)}_3(\bg_3)\>,$$
$\bf = (\bf_1, \bf_2, \bf_3)$, $\bg = (\bg_1, \bg_2, \bg_3)$ $\in \A^{(\bl)}(\D^n).$ Moreover, $\bA^{(\bl,\bmu)}(\D^n)$ is a \rkhs with the reproducing kernel
\beq\label{eqn1} 
K^{(\bl,\bmu)}(\bz,\bw)=K_1^{(\bl)}(\bz,\bw)+\mu_1^2K_2^{(\bl)}(\bz,\bw)+\mu_2^2K_3^{(\bl)}(\bz,\bw), \text{ for }\bz,\bw\in\D^n.\eeq
In the following proposition, we compute the reproducing kernel $K^{(\bl,\bmu)}$.

\begin{prop}\label{prop2}
The reproducing kernel of the Hilbert space $\bA^{(\bl,\bmu)}(\D^n)$ is 
\begin{flalign*}
K^{(\bl,\bmu)}(\bz,\bw)&=
   D(\bz,\bw)\!\begin{pmatrix}
      1 & z_1 & z_2\\
\ov{w}_1 & \frac{1}{\l_1}+\mu_1^2+z_1\ov{w}_1 & \ov{w}_1z_2\\
\ov{w}_2 & z_1\ov{w}_2 & \frac{1}{\l_2}+\mu_2^2+z_2\ov{w}_2\\
    \end{pmatrix}\\
    &\times D(\bz,\bw)\prod_{j=1}^n(1-z_j\ov{w}_j)^{-\l_j-2\delta_{1j}-2\delta_{2j}}
\end{flalign*}
where $D(\bz,\bw)$ is the $3\times 3$ diagonal matrix diag$((1-z_1\ov{w}_1)(1-z_2\ov{w}_2),(1-z_2\ov{w}_2),(1-z_1\ov{w}_1))$ and $\delta$ is the Kronecker delta function.
\end{prop}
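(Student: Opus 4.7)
The plan is to exploit the orthogonal direct sum decomposition
\[
\bA^{(\bl,\bmu)}(\D^n)=\bA_1^{(\bl)}\oplus\bA_2^{(\bl)}\oplus\bA_3^{(\bl)},
\]
which is built into the weighted inner product defining $\bA^{(\bl,\bmu)}(\D^n)$. Equation \eqref{eqn1} then reduces the proof to computing each $K_i^{(\bl)}$ separately and checking that $K_1^{(\bl)}+\mu_1^2 K_2^{(\bl)}+\mu_2^2 K_3^{(\bl)}$ admits the claimed factorisation.

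The kernels $K_2^{(\bl)}$ and $K_3^{(\bl)}$ are immediate. Since $\G_2^{(1)}$ is the isometric embedding $\bf_2\mapsto(0,\bf_2,0)^t$ of $\A_2^{(\bl)}$, the matrix $K_2^{(\bl)}(\bz,\bw)$ vanishes except in position $(2,2)$, where it equals the scalar reproducing kernel $(1-z_1\ov{w}_1)^{-\l_1-2}\prod_{j\geq 2}(1-z_j\ov{w}_j)^{-\l_j}$ of $\A_2^{(\bl)}$; an analogous formula holds for $K_3^{(\bl)}$, with the non-zero entry in position $(3,3)$.

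The real work lies in computing $K_1^{(\bl)}$. Because $\G_1^{(1)}$ is an isometry whose second and third slots record first-order partial derivatives, the reproducing kernel of $\bA_1^{(\bl)}$ is expressible in terms of the scalar kernel $k^{(\bl)}(\bz,\bw)=\prod_{j}(1-z_j\ov{w}_j)^{-\l_j}$ of $\A_1^{(\bl)}$: the $(a,b)$-entry of $K_1^{(\bl)}(\bz,\bw)$ equals $c_a c_b\,\partial_{\bz}^{\alpha_a}\partial_{\ov{\bw}}^{\alpha_b}\,k^{(\bl)}(\bz,\bw)$, where $\alpha_1=(0,0)$, $\alpha_2=(1,0)$, $\alpha_3=(0,1)$ (viewed as multi-indices in the first two variables) and $c_1=1$, $c_2=\l_1^{-1}$, $c_3=\l_2^{-1}$. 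The derivatives are elementary: one uses $\partial_{z_i}(1-z_i\ov{w}_i)^{-\l_i}=\l_i\ov{w}_i(1-z_i\ov{w}_i)^{-\l_i-1}$ and, for the diagonal mixed derivatives, $\partial_{z_i}\partial_{\ov{w}_i}(1-z_i\ov{w}_i)^{-\l_i}=\l_i\bigl(1+\l_i z_i\ov{w}_i\bigr)(1-z_i\ov{w}_i)^{-\l_i-2}$.

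With all nine entries of $K_1^{(\bl)}$ in hand, adding $\mu_1^2 K_2^{(\bl)}$ and $\mu_2^2 K_3^{(\bl)}$ alters only the $(2,2)$ and $(3,3)$ positions. The final step is to factor the common scalar $\prod_{j}(1-z_j\ov{w}_j)^{-\l_j-2\delta_{1j}-2\delta_{2j}}$ out of every entry and match the residual matrix with $D(\bz,\bw)\,M(\bz,\bw)\,D(\bz,\bw)$ as displayed in the statement. The main obstacle is this concluding bookkeeping step: the $(2,2)$-entry, which combines $\l_1^{-2}\partial_{z_1}\partial_{\ov{w}_1}k^{(\bl)}$ with $\mu_1^2$ times the scalar kernel of $\A_2^{(\bl)}$, must collapse to $(1-z_2\ov{w}_2)^2\bigl(\tfrac{1}{\l_1}+\mu_1^2+z_1\ov{w}_1\bigr)$ after extraction, and similarly for the $(3,3)$-entry; the off-diagonal entries, arising only from mixed derivatives of $k^{(\bl)}$, then furnish the stated monomials in $z_1,z_2,\ov{w}_1,\ov{w}_2$ together with the correct powers of $(1-z_1\ov{w}_1)$ and $(1-z_2\ov{w}_2)$ supplied by the flanking diagonal matrices $D$.
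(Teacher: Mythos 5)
Your proposal is correct and follows essentially the same route as the paper: decompose via \eqref{eqn1}, note that $K_2^{(\bl)}$ and $K_3^{(\bl)}$ are the obvious diagonal kernels, identify $K_1^{(\bl)}$ as the matrix $\big(\!\big(\l_a^{-1}\l_b^{-1}\partial_a\overline{\partial}_b\,k^{(\bl)}\big)\!\big)$ coming from the isometry $\G^{(1)}_1$, and then assemble and factor. The derivative formulas and the bookkeeping you outline check out against the stated factorisation.
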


\begin{proof}
Let $K^{(\bl)}(\bz,\bw)=\prod_{j=1}^n(1-z_j\ov{w}_j)^{-\l_j}$ be the reproducing kernel for the Hilbert space $\otimes_{j=1}^n\A^{(\l_j)}$. First, we compute the reproducing kernel $K_1^{(\bl)}$. We claim that 
$$K_1^{(\bl)}(\bz,\bw)=\left(\!\!\left(\l_i^{-1}\l_j^{-1}\del_i\dbar_jK^{(\bl)}(\bz,\bw)\right)\!\!\right)_{i,j=0}^2$$ where $\l_0=1$ and $\del_0f=\dbar_0f=f$ for any function $f$.

Let $\bf \in \otimes_{j=1}^n\A^{(\l_j)}$. Note, for $j=0,1,2$ and $\bw\in\D^n$, that 
%\begin{eqnarray*}
$$\<\G_1(\bf),K_1^{(\bl)}(\cdot, \bw)\epsilon_j\>~=~\<\G_1(\bf),\G_1(\l_j^{-1}\dbar_j K^{(\bl)}(\cdot,\bw))\>~=~\<\G_1(\bf)(\bw),\epsilon_j\>$$implying that $K_1^{(\bl)}$ satisfies the reproducing property.

We now observe from the definition of $\G_2$ and $\G_3$ that 
$$K_2^{(\bl)}(\bz,\bw) = \left(\begin{smallmatrix}
0 & 0 & 0\\
0 & \prod_{j=1}^n(1-z_j\ov{w}_j)^{-\l_j-2\delta_{1j}} & 0\\
0 & 0 & 0
\end{smallmatrix}\right)\text{ and }
K_3^{(\bl)}(\bz,\bw) = \left(\begin{smallmatrix}
0 & 0 & 0\\
0 & 0 & 0\\
0 & 0 & \prod_{j=1}^n(1-z_j\ov{w}_j)^{-\l_j-2\delta_{2j}}
\end{smallmatrix}\right).$$
Then substituting $K_1^{(\bl)}$, $K_2^{(\bl)}$ and $K_3^{(\bl)}$ in the equation $\eqref{eqn1}$ the desired form of the reproducing kernel $K^{(\bl,\bmu)}(\bz,\bw)$, for $\bz,\bw \in\D^n$, is obtained.
\end{proof}

We now prove the boundedness of the multiplication operators on $\bA^{(\bl,\bmu)}(\D^n)$ using the following well-known lemma.

\begin{lem}\label{lem3}
For $1\leq j\leq n$, the multiplication operator $M_{z_j}$ is bounded if and only if there exists a positive constant $c_j$ such that \beq\label{inq4}(c_j^2-z_j\ov{w}_j)K(\bz,\bw)\geq 0.\eeq
\end{lem}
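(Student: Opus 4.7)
The plan is to convert the operator-theoretic condition ``$M_{z_j}$ is bounded'' into the kernel-theoretic condition \eqref{inq4} via the standard duality between multiplication operators and their action on kernel sections. Since $\|M_{z_j}\|=\|M_{z_j}^{*}\|$, boundedness of $M_{z_j}$ with norm at most $c_j$ is equivalent to $c_j^{2}I-M_{z_j}M_{z_j}^{*}\geq 0$ as operators on $\H_K$. The first step is therefore to compute the adjoint $M_{z_j}^{*}$ on kernel sections: for $\bw\in\D^n$ and $x\in\C^r$, the reproducing property yields
\[
\bigl\langle \bf,\, M_{z_j}^{*}K(\cdot,\bw)x\bigr\rangle = \bigl\langle M_{z_j}\bf,\, K(\cdot,\bw)x\bigr\rangle = \bigl\langle w_j\bf(\bw),x\bigr\rangle_{\C^r} = \bigl\langle\bf,\, \bar w_j K(\cdot,\bw)x\bigr\rangle,
\]
so $M_{z_j}^{*}K(\cdot,\bw)x=\bar w_j K(\cdot,\bw)x$.

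Next, I would test the inequality $c_j^{2}I-M_{z_j}M_{z_j}^{*}\geq 0$ on a generic finite linear combination $F=\sum_{i}K(\cdot,\bw_i)x_i$, which is a standard dense subset of $\H_K$. Using the previous step, I compute
\[
\bigl\langle(c_j^{2}I-M_{z_j}M_{z_j}^{*})F,F\bigr\rangle = \sum_{i,k}\bigl\langle(c_j^{2}-w_{k,j}\bar w_{i,j})K(\bw_k,\bw_i)x_i,x_k\bigr\rangle_{\C^r}.
\]
The right hand side being non-negative for every finite choice of $\{(\bw_i,x_i)\}$ is precisely the statement that the matrix-valued kernel $(\bz,\bw)\mapsto(c_j^{2}-z_j\bar w_j)K(\bz,\bw)$ is positive semi-definite in the sense of \eqref{inq4}. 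This gives both directions simultaneously: if $M_{z_j}$ is bounded, take $c_j=\|M_{z_j}\|$ (or any larger constant) to obtain \eqref{inq4}; conversely, if \eqref{inq4} holds for some $c_j>0$, then the displayed quadratic form is non-negative on the dense set of finite kernel combinations, so $M_{z_j}M_{z_j}^{*}\leq c_j^{2}I$ on all of $\H_K$, giving boundedness with $\|M_{z_j}\|\leq c_j$.

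The only subtlety — and what I would flag as the main (mild) obstacle — is keeping track of the matrix-valued nature of $K$, since the paper works with $K:\D^n\times\D^n\to\mathrm M(r,\C)$ rather than scalar kernels; but this is absorbed entirely into using the vector-valued reproducing property $\langle\bf(\bw),x\rangle_{\C^r}=\langle\bf,K(\cdot,\bw)x\rangle$ and writing the resulting positivity condition in terms of $\C^r$-inner products of the block entries $K(\bw_k,\bw_i)$, exactly as above. No further ingredients are needed.
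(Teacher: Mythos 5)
Your proof is correct, and it is the standard kernel-positivity characterization of boundedness; note that the paper itself states this lemma as ``well-known'' and supplies no proof at all, so there is no argument of the authors' to compare against. Your computation of the quadratic form
$\langle (c_j^2 I - M_{z_j}M_{z_j}^*)F,F\rangle$ on finite combinations $F=\sum_i K(\cdot,\bw_i)x_i$ is exactly the positivity condition \eqref{inq4} evaluated on the points $\bw_i$, and the matrix-valued nature of $K$ is indeed absorbed by the vector-valued reproducing property, as you say.

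The one point worth tightening is the converse direction: you write $M_{z_j}M_{z_j}^{*}\leq c_j^{2}I$ before it is known that $M_{z_j}$ is a bounded (or even everywhere-defined) operator on $\H_K$, and the formula $M_{z_j}^{*}K(\cdot,\bw)x=\ov{w}_jK(\cdot,\bw)x$ presupposes that the kernel sections lie in the domain of an adjoint. The clean ordering is: define $A$ on the linear span of kernel sections by $A\big(\sum_i K(\cdot,\bw_i)x_i\big)=\sum_i \ov{w_{i,j}}\,K(\cdot,\bw_i)x_i$; positivity of $(c_j^2-z_j\ov{w}_j)K$ is precisely the inequality $\|AF\|^2\leq c_j^2\|F\|^2$ for all such $F$ (which in particular shows $A$ is well defined, independent of the representation of $F$), so $A$ extends to a bounded operator of norm at most $c_j$, and the reproducing property identifies $A^{*}$ with multiplication by $z_j$, which is therefore bounded on all of $\H_K$ with $\|M_{z_j}\|\leq c_j$. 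This is only a reordering of steps you already have, not a missing idea, so I would count the proposal as complete.
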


For any $\epsilon>0$, the multiplication operator $M^{(\epsilon)}$ on $\A^{(\epsilon)}$, $(M^{(\epsilon)}f)(z)=zf(z)$, is bounded. Consequently, the reproducing kernel $K^{(\epsilon)}$ of the Hilbert space $\A^{(\epsilon)}$ satisfies the inequality \eqref{inq4} in  Lemma \ref{lem3} for any $\epsilon>0$. We use this technique to show that the $n$ - tuple of the multiplication operators by coordinate functions on $\bA^{(\bl,\bmu)}(\D^n)$ are bounded.

\begin{thm}\label{thm5}
The multiplication operators $M_{z_1},\hdots,M_{z_n}$ on $\bA^{(\bl,\bmu)}(\D^n)$ corresponding to the coordinate functions are bounded.
\end{thm}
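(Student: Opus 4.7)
The plan is to transfer the boundedness question to the pre-image side of the isometric isomorphism $\G^{(1)} : \A^{(\bl)}(\D^n) \to \bA^{(\bl,\bmu)}(\D^n)$, where the domain $\A^{(\bl)}(\D^n) = \A_1^{(\bl)} \oplus \A_2^{(\bl)} \oplus \A_3^{(\bl)}$ is equipped with the weighted inner product determined by the weights $1, \mu_1^2, \mu_2^2$. I will analyze the transported operator $T_j := (\G^{(1)})^{-1} M_{z_j} \G^{(1)}$ and show it is bounded by writing it as a block-triangular operator whose entries are either bounded multiplication operators or continuous inclusions.

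The action of $T_j$ is determined by the Leibniz rule $\del_i(z_j f) = \delta_{ij} f + z_j \del_i f$. When $j \geq 3$, this rule gives $M_{z_j} \circ \G_i^{(1)} = \G_i^{(1)} \circ M_{z_j}$ for every $i \in \{1,2,3\}$, so $T_j$ is block-diagonal with entries given by multiplication by $z_j$ on each $\A_i^{(\bl)}$. Since each $\A_i^{(\bl)}$ is a tensor product of weighted Bergman-type spaces $\A^{(\alpha)}$ on $\D$ on which multiplication by a coordinate is a bounded weighted shift, each diagonal block is bounded. When $j = 1$, the identity $\del_1(z_1 \bf_1) = \bf_1 + z_1 \del_1 \bf_1$ contributes an extra term and gives
\begin{equation*}
M_{z_1} \G_1^{(1)}(\bf_1) = \G_1^{(1)}(z_1 \bf_1) - \l_1^{-1} \G_2^{(1)}(\bf_1),
\end{equation*}
while $M_{z_1}$ still commutes with $\G_2^{(1)}$ and $\G_3^{(1)}$. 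An analogous identity involving $\G_3^{(1)}$ and $\l_2^{-1}$ is obtained for $j = 2$. Thus $T_1$ (respectively $T_2$) is block lower-triangular with bounded diagonal blocks and a single off-diagonal block that is a scalar multiple of the inclusion $\iota_2 : \A_1^{(\bl)} \hookrightarrow \A_2^{(\bl)}$ (respectively $\iota_3 : \A_1^{(\bl)} \hookrightarrow \A_3^{(\bl)}$), induced by bumping the weight in the first (respectively second) tensor slot from $\l_1$ to $\l_1 + 2$ (respectively $\l_2$ to $\l_2 + 2$).

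The key technical point is that the inclusion $\A^{(\alpha)} \hookrightarrow \A^{(\alpha+2)}$ is continuous. Comparing the monomial norms one obtains
\begin{equation*}
\frac{\|z^k\|_{(\alpha+2)}^2}{\|z^k\|_{(\alpha)}^2} \;=\; \frac{(\alpha)_k}{(\alpha+2)_k} \;=\; \frac{\alpha(\alpha+1)}{(\alpha+k)(\alpha+k+1)} \;\leq\; 1
\end{equation*}
for every $k \geq 0$, so the inclusion is in fact a contraction; tensoring with the identity on the remaining factors yields continuity of $\iota_2$ and $\iota_3$. Combined with the boundedness of coordinate multiplication on each $\A_i^{(\bl)}$ noted above, this proves $T_j$ is bounded and hence $M_{z_j}$ is bounded on $\bA^{(\bl,\bmu)}(\D^n)$. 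The main obstacle is expected to be purely bookkeeping, namely keeping track of which tensor factor carries which weight across the three components of $\G^{(1)}$. One could alternatively invoke Lemma \ref{lem3} directly and verify $(c_j^2 - z_j \ov{w}_j) K^{(\bl,\bmu)}(\bz, \bw) \succeq 0$ summand by summand in the decomposition $K^{(\bl,\bmu)} = K_1^{(\bl)} + \mu_1^2 K_2^{(\bl)} + \mu_2^2 K_3^{(\bl)}$, but this route requires a matrix-positivity computation for $K_1^{(\bl)}$ that the transfer approach bypasses.
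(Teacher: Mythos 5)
Your argument is correct, but it takes a genuinely different route from the paper. The paper works entirely on the kernel side: it invokes Lemma \ref{lem3}, disposes of $M_{z_j}$ for $j\geq 3$ via the product factorization of $K^{(\bl,\bmu)}$, and then handles $M_{z_1}$ and $M_{z_2}$ by extracting a factor $(1-z_1\ov{w}_1)^{-\epsilon}$ from the two-variable matrix kernel --- choosing $\epsilon$ small enough that the perturbed parameter $\tilde{\mu}_1^2=\mu_1^2+\tfrac{1}{\l_1}-\tfrac{1}{\l_1-\epsilon}$ stays positive, so that $K^{((\l_1,\l_2),(\mu_1,\mu_2))}=(1-z_1\ov{w}_1)^{-\epsilon}K^{((\l_1-\epsilon,\l_2),(\tilde\mu_1,\mu_2))}$ remains a product of a positive definite kernel with one on which coordinate multiplication is bounded. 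You instead conjugate by the unitary $\G^{(1)}$ onto the weighted direct sum $\A_1^{(\bl)}\oplus\A_2^{(\bl)}\oplus\A_3^{(\bl)}$ and read off the block-triangular matrix of $T_j=(\G^{(1)})^{-1}M_{z_j}\G^{(1)}$; your Leibniz-rule computation of the off-diagonal entries is right (e.g.\ $T_1(\bf_1,\bf_2,\bf_3)=(z_1\bf_1,\,z_1\bf_2-\l_1^{-1}\iota_2\bf_1,\,z_1\bf_3)$), the monomial-norm ratio $(\alpha)_k/(\alpha+2)_k=\alpha(\alpha+1)/\big((\alpha+k)(\alpha+k+1)\big)\leq 1$ does show the inclusion $\A^{(\alpha)}\hookrightarrow\A^{(\alpha+2)}$ is a contraction, and the fixed weights $1,\mu_1^2,\mu_2^2$ only rescale block norms by constants. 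Your approach is more elementary and transparent --- it avoids both the matrix-positivity bookkeeping and the $\epsilon$-perturbation, and yields explicit norm bounds --- whereas the paper's kernel-side argument stays consistent with the machinery (Lemma \ref{lem3}) used throughout and does not rely on having an explicit orthogonal decomposition of the space in hand.
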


\begin{proof}
We begin with the observation that the multiplication operators $M_{z_j}$, $3\leq j\leq n$, are bounded since the reproducing kernel $K^{(\bl,\bmu)}(\bz,\bw)$ has the form (Proposition \ref{prop2})
$$
K^{(\bl,\bmu)}(\bz,\bw)=K^{((\l_1,\l_2),(\mu_1,\mu_2))}((z_1,z_2),(w_1,w_2))\prod_{j=3}^n(1-z_j\ov{w}_j)^{-\l_j}$$ where $K^{((\l_1,\l_2),(\mu_1,\mu_2))}((z_1,z_2),(w_1,w_2))$ is the reproducing kernel
\begin{flalign*}
&D(\bz,\bw)\begin{pmatrix}
      1 & z_1 & z_2\\
\ov{w}_1 & \frac{1}{\l_1}+\mu_1^2+z_1\ov{w}_1 & \ov{w}_1z_2\\
\ov{w}_2 & z_1\ov{w}_2 & \frac{1}{\l_2}+\mu_2^2+z_2\ov{w}_2\\
    \end{pmatrix}D(\bz,\bw)(1-z_1\ov{w}_1)^{-\l_1-2}(1-z_2\ov{w}_2)^{-\l_2-2}
\end{flalign*}
where $D(\bz,\bw)$ is the $3\times 3$ diagonal matrix diag $((1-z_1\ov{w}_1)(1-z_2\ov{w}_2),(1-z_2\ov{w}_2),(1-z_1\ov{w}_1))$
and $M_{z_j}$ is bounded on $\A^{(\l_j)}$, for $3\leq j\leq n$. Therefore, it is enough to show that $M_{z_1}$ and $M_{z_2}$ are bounded.

Let $\epsilon>0$ be such that $\epsilon<\frac{\mu_1^2\l_1^2}{1+\l_1\mu_1^2}.$ Note that $$\tilde{\mu}_1^2:=\mu_1^2+\dfrac{1}{\l_1}-\dfrac{1}{\l_1-\epsilon}>0.$$ Consequently, $K^{((\l_1-\epsilon,\l_2),(\tilde{\mu}_1,\mu_2))}((z_1,z_2),(w_1,w_2))$ is a positive definite kernel. Also, it is observed that 
$$K^{((\l_1,\l_2),(\mu_1,\mu_2))}((z_1,z_2),(w_1,w_2))=(1-z_1\ov{w}_1)^{-\epsilon}K^{((\l_1-\epsilon,\l_2),(\tilde{\mu}_1,\mu_2))}((z_1,z_2),(w_1,w_2)).$$
Since the multiplication operator on the Hilbert space whose reproducing kernel is $(1-z_1\ov{w}_1)^{-\epsilon}$ is bounded, it follows that there exists $c_1>0$ such that 
$$(c_1-z_1\ov{w}_1)(1-z_1\ov{w}_1)^{-\epsilon}\geq 0.$$ So multiplying $(c_1-z_1\ov{w}_1)$ with the both sides of the equation above we see that 
$$(c_1-z_1\ov{w}_1)K^{((\l_1,\l_2),(\mu_1,\mu_2))}((z_1,z_2),(w_1,w_2))\geq 0.$$ It follows from Lemma \ref{lem3} that $M_{z_1}$ is bounded and a similar argument with $0<\epsilon<\frac{\mu_2^2\l_2^2}{1+\l_2\mu_2^2}$ yields that $M_{z_2}$ is also bounded.\end{proof}

\subsection{Homogeneity and irreducibility}
In this subsection, we first show that the tuple of multiplication operators $(M_{z_1},\hdots,M_{z_n})$ on $\bA^{(\bl,\bmu)}(\D^n)$ are homogeneous under the action of the group $\mob^n$. The tuple is then shown to be irreducible. The proof follows from establishing that the only self-adjoint projections which commute with the normalized kernel are scalar times the identity operator.

For $g\in\widetilde{\mob}$, $g'(z)^{\a}$ is a real analytic function on the simply connected set $\widetilde{\mob}\times \D$, holomorphic in $z$. Since $g$ is one to one and holomorphic, we also have that $g'(z)^{\a}\neq 0$. Now given $\a\in\R_{>0}$, taking the principal branch of power function when $g$ is near the identity, we can uniquely define $g'(z)^{\a}$ as a real analytic function on $\widetilde{\mob}\times \D$ which is holomorphic on $\D$ for all fixed $g\in\widetilde{\mob}$. The multiplier $j_{\a}(g,z)=g'(z)^{\a}$ defines on $\A^{(\a)}(\D)$ the unitary representation $D^+_{\a}$ as follows:
$$D^+_{\a}(g^{-1})(f):=(g')^{\frac{\a}{2}}(f\circ g), ~~~f\in\A^{(\a)}(\D),~g\in\widetilde{\mob}.$$ As a consequence, for any $\a,\b\in\R_{>0}$, the multiplier $j_{(\a,\b)}((g_1,g_2),(z_1,z_2)):=g'_1(z_1)^{\frac{\a}{2}}g'_2(z_2)^{\frac{\b}{2}}$ defines on $\A^{(\a)}\otimes\A^{(\b)}$ the unitary representation $D^+_{\a}\otimes D^+_{\b}$ of the group $\widetilde{\mob}\times\widetilde{\mob}$ in the following manner:
\beq\label{eqn6}
(D^+_{\a}\otimes D^+_{\b})(g_1^{-1},g_2^{-1})(f_1\otimes f_2) = g'_1(z_1)^{\frac{\a}{2}}(f_1\circ g_1)g'_2(z_2)^{\frac{\b}{2}}(f_2\circ g_2),
\eeq
for $f_1\otimes f_2\in\A^{(\a)}\otimes\A^{(\b)}$, $(g_1,g_2)\in\widetilde{\mob}\times \widetilde{\mob}$ and $(z_1,z_2)\in\D^2$. Thus for the group $\tilde{\mathsf G}=\widetilde{\mob}^n$, we have the unitary representations $\otimes_{j=1}^nD^+_{\l_j+2\delta_{i-1,j}}$ on the Hilbert spaces $\A_i^{(\bl)}$, for $i=1,2,3$, respectively. Therefore, the direct sum of these representations can be transferred to $\bA^{(\bl,\bmu)}(\D^n)$ by the map $\G$. We show that this is a multiplier representation. In this regard, we introduce the notaion 
\beq\label{not7}
D^+_{\bl}:=\oplus_{i=1}^3\left(\otimes_{j=1}^nD^+_{\l_j+2\delta_{i-1,j}}\right).
\eeq
We need a relation between $g''(z)$ and $g'(z)$, for $g\in\widetilde{\mob}$, in the following calculations. The elements of SU$(1,1)$ are the matrices $\left(\!
    \begin{smallmatrix}
      a & b \\
      \ov{b}    & \ov{a} \\
   \end{smallmatrix}
    \!\right)$ with $|a|^2-|b|^2=1$, acting on $\D$ by the fractional linear transformations. The inequalities 
\beq\label{inq8}
|a-1|<\dfrac{1}{2},~~~|b|<\dfrac{1}{2}
\eeq
determine a simply connected neighbourhood $U_0$ of the identiy $e$ in SU$(1,1)$. Under the natural projections, it is diffeomorphic to a neighbourhood $\tilde{U}$ of $e$ in $\widetilde{\mob}$. So we may use $a,b$ satisfying \eqref{inq8} to parametrize $\tilde{U}$. For $g\in\tilde{U}$, $z\in \D$, we have that $g'(z)=(\ov{b}z+\ov{a})^{-2}$ and $g''(z)=-2\ov{b}(\ov{b}z+\ov{a})^{-3}$ which gives the relation 
\beq\label{eqn9}
g''(z)=-2c_gg'(z)^{\frac{3}{2}}
\eeq 
where $c_g$ depends on $g$ real analytically and is independent of $z$ and the meaning of $g'(z)^{\frac{3}{2}}$ is as defined above. Since both sides of the equation \eqref{eqn9} are real analytic, the identity therein remains true on all of $\widetilde{\mob}\times \D$.

\begin{prop}\label{prop10}
The image of $D^+_{\bl}$ under the map $\G$ is a multiplier representation with the multiplier given by 
\[ J(g,\bz):= \begin{pmatrix}
      (g'_1)^{\frac{\l_1}{2}}(g'_2)^{\frac{\l_2}{2}} & 0 & 0\\
 -c_{g_1}(g'_1)^{\frac{\l_1+1}{2}}(g'_2)^{\frac{\l_2}{2}}      & (g'_1)^{\frac{\l_1+2}{2}}(g'_2)^{\frac{\l_2}{2}} & 0 \\
-c_{g_2}(g'_1)^{\frac{\l_1}{2}}(g'_2)^{\frac{\l_2+1}{2}} & 0 & (g'_1)^{\frac{\l_1}{2}}(g'_2)^{\frac{\l_2+2}{2}} \\
    \end{pmatrix}\prod_{j=3}^n (g'_{j})^{\frac{\l_j}{2}}(\bz)
\].
\end{prop}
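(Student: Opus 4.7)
The plan is a direct computation: unwind the definition of $\Gamma$, apply the tensor product multiplier representation \eqref{eqn6} summand-by-summand, then use the chain rule together with the key identity \eqref{eqn9} to write the result as a matrix-valued multiplier acting on $\bF\circ g$. Fix $g=(g_1,\ldots,g_n)\in\widetilde{\mob}^n$ and $\bf=(\bf_1,\bf_2,\bf_3)\in\A^{(\bl)}(\D^n)$. The first step is to record, from \eqref{eqn6} applied $n$ times, that
\[
(D^+_\bl(g^{-1})\bf)_i \;=\; \Big(\prod_{j=1}^n (g_j')^{(\l_j+2\delta_{i-1,j})/2}\Big)\,\bf_i\circ g,\qquad i=1,2,3.
\]
For $i=2,3$ the maps $\Gamma_2^{(1)},\Gamma_3^{(1)}$ simply place this function in slot $2$ or $3$, so the contributions to the second/third components of $\Gamma\bigl(D^+_\bl(g^{-1})\bf\bigr)$ are $(g_1')^{(\l_1+2)/2}(g_2')^{\l_2/2}\prod_{j\geq 3}(g_j')^{\l_j/2}\,(\bf_2\circ g)$ and the analogous expression with $(g_2')^{(\l_2+2)/2}$ and $\bf_3\circ g$, matching the diagonal entries $J_{22}$ and $J_{33}$ times $(\bf_2\circ g)$ and $(\bf_3\circ g)$, respectively.

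The substantive step is the $i=1$ summand. Here we must compute $\l_k^{-1}\partial_k$ of $\prod_j (g_j')^{\l_j/2}\,\bf_1\circ g$ for $k=1,2$. Using the product rule together with $\partial_k(\bf_1\circ g)=(g_k')\,(\partial_k\bf_1)\circ g$ (since each $g_j$ depends only on $z_j$), one finds
\[
\l_k^{-1}\partial_k\Big(\prod_j (g_j')^{\l_j/2}\,\bf_1\circ g\Big)
= \tfrac{1}{2}\l_k^{-1}(g_k')^{\l_k/2-1}g_k''\prod_{j\neq k}(g_j')^{\l_j/2}\,\bf_1\circ g + (g_k')^{(\l_k+2)/2}\prod_{j\neq k}(g_j')^{\l_j/2}\,\l_k^{-1}(\partial_k\bf_1)\circ g.
\]
Substituting the identity \eqref{eqn9}, $g_k''=-2c_{g_k}(g_k')^{3/2}$, collapses the first summand to $-c_{g_k}(g_k')^{(\l_k+1)/2}\prod_{j\neq k}(g_j')^{\l_j/2}\,\bf_1\circ g$. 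This is precisely where the off-diagonal entries $-c_{g_1}(g_1')^{(\l_1+1)/2}(g_2')^{\l_2/2}$ and $-c_{g_2}(g_1')^{\l_1/2}(g_2')^{(\l_2+1)/2}$ of $J(g,\bz)$ emerge.

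Finally, writing $\bF=\Gamma(\bf)=(F_1,F_2,F_3)^t$ with $F_1=\bf_1$, $F_2=\l_1^{-1}\partial_1\bf_1+\bf_2$, $F_3=\l_2^{-1}\partial_2\bf_1+\bf_3$, one adds the three summand contributions: the ``$\partial_k$'' term in $\Gamma_1$ together with the $\Gamma_{k+1}$ term reassemble into $F_{k+1}\circ g$ multiplied by the diagonal factor $(g_k')^{(\l_k+2)/2}$, while the $c_{g_k}$ terms pair with $F_1\circ g=\bf_1\circ g$ to give the lower-triangular entries. After pulling out the common factor $\prod_{j\geq 3}(g_j')^{\l_j/2}(z_j)$, the result matches exactly $J(g,\bz)\bF(g(\bz))$ with $J$ as stated. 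Since $\Gamma$ intertwines, this shows that $\Gamma\circ D^+_\bl(g^{-1})\circ\Gamma^{-1}$ acts as a multiplier with multiplier $J(g,\bz)$. The only real obstacle is algebraic bookkeeping in the $i=1$ component; the decisive simplification is the use of \eqref{eqn9} to convert each $g_k''$ into a power of $g_k'$, which is what makes $J$ holomorphic in $\bz$ with the prescribed triangular shape.
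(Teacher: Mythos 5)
Your proof is correct and follows essentially the same route as the paper: split $D^+_{\bl}$ over the three summands, observe that the $i=2,3$ pieces only produce the diagonal entries, and extract the first-column entries by applying $\l_k^{-1}\partial_k$ to the $i=1$ piece via the chain rule and the identity \eqref{eqn9}. (One small typo: in your displayed product-rule expansion the first summand should carry the coefficient $\tfrac{1}{2}$ rather than $\tfrac{1}{2}\l_k^{-1}$, since the outer $\l_k^{-1}$ cancels the $\tfrac{\l_k}{2}$ coming from differentiating $(g_k')^{\l_k/2}$; after that the stated collapse to $-c_{g_k}(g_k')^{(\l_k+1)/2}\prod_{j\neq k}(g_j')^{\l_j/2}\,\bf_1\circ g$ is exactly right, so the conclusion is unaffected.)
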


\begin{proof}
We begin by pointing out that it is enough to show that 
$$\G^{(1)}_i(D^+_{\bl}(g^{-1})\bf)(\bz)=J(g,\bz)\G^{(1)}_i(\bf\circ g)(\bz),~~~\bf\in\A^{(\bl)}_i,\bz\in\D^n,g\in\tilde{\mathsf G}.$$ Let $i=1$, $\bf=f_1\otimes\cdots\otimes f_n\in\A^{(\bl)}_i$ and $g=(g_1,\hdots,g_n)\in\tilde{\mathsf G}$. Then 
\begin{flalign*}
\G^{(1)}_1(D^+_{\bl}(g^{-1})\bf)(\bz) &= \G^{(1)}_1\left(\prod_{j=1}^nD^+_{\l_j}(g_j^{-1})f_j\right)(\bz)\\
%& \G_1\left(\left(\otimes_{j=1}^nD^+_{\l_j}\right)(g)\bf\right)(\bz)\\
&= \G^{(1)}_1\left(\prod_{j=1}^n(g'_j)^{\frac{\l_j}{2}}(f_j\circ g_j)\right)(\bz)\\
&= \begin{pmatrix}
        \prod_{j=1}^n(g'_j)^{\frac{\l_j}{2}}(f_j\circ  g_j)\\
       \l_1^{-1}\del_1\left[(g'_1)^{\frac{\l_1}{2}}(f_1\circ g_1)\right]\prod_{j=2}^n(g'_j)^{\frac{\l_j}{2}}(f_j\circ g_j) \\
       \l_2^{-1}\del_2\left[(g'_2)^{\frac{\l_2}{2}}(f_2\circ g_2)\right]\prod_{j\neq 2}(g'_j)^{\frac{\l_j}{2}}(f_j\circ g_j) \\
    \end{pmatrix}(\bz)\\
&= J(g,\bz)\G^{(1)}_1\left(\prod_{j=1}^n f_j\circ g_j\right)(\bz)\\
&= J(g,\bz)\G^{(1)}_1(\bf\circ g)(\bz)
\end{flalign*}
which shows the equality for $i=1$. Similar calculation, for $i=2$ and $i=3$, leads to the following identities
$$\G^{(1)}_i(D^+_{\bl}(g^{-1})\bf)(\bz)=J(g,\bz)\G^{(1)}_i(\bf\circ g)(\bz),~~~i=2,3.$$
Thus the proof is complete.
\end{proof}

The verification of the assertion in the theorem below follows from Theorem \ref{thm5} and Proposition \ref{prop10}.

\begin{thm}\label{thm11}
The tuple of multiplication operators $(M_{z_1},\hdots,M_{z_n})$ on $\bA^{(\bl,\bmu)}(\D^n)$ corresponding to the coordinate functions are homogeneous under the action of the group $\mob^n$.
\end{thm}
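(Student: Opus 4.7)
The plan is to combine Theorem \ref{thm5} with Proposition \ref{prop10} and invoke the characterization (cited from [Theorem 3.1, OICHO] at the beginning of Section \ref{Section 4}) that an $n$-tuple in $\mathrm B_r(\D^n)$ is homogeneous with respect to $\mob^n$ precisely when its reproducing kernel is quasi-invariant under $\widetilde{\mob}^n$.

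First I would observe that each discrete series representation $D^+_{\a}$ is unitary on $\A^{(\a)}(\D)$, so every tensor product $\otimes_{j=1}^n D^+_{\l_j + 2\delta_{i-1,j}}$ is unitary on the corresponding $\A_i^{(\bl)}$. Consequently $D^+_{\bl}$ (defined in \eqref{not7}) is a unitary representation of $\widetilde{\mob}^n$ on $\A^{(\bl)}(\D^n) = \oplus_{i=1}^3 \A_i^{(\bl)}$ equipped with the weighted direct-sum inner product (weights $1,\mu_1^2,\mu_2^2$) that was used to define $\bA^{(\bl,\bmu)}(\D^n)$. Since $\G$ is by construction an isometric isomorphism onto $\bA^{(\bl,\bmu)}(\D^n)$, the transported operators $U_g := \G \circ D^+_{\bl}(g) \circ \G^{-1}$ form a unitary representation of $\widetilde{\mob}^n$ on $\bA^{(\bl,\bmu)}(\D^n)$, and by Proposition \ref{prop10} it is a multiplier representation with multiplier $J$; explicitly,
$$(U_{g^{-1}} F)(\bz) = J(g, \bz)\, F(g \cdot \bz), \qquad F \in \bA^{(\bl,\bmu)}(\D^n),\ g \in \widetilde{\mob}^n.$$

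Next I would test this identity on the kernel sections $K^{(\bl,\bmu)}(\cdot, \bw)\zeta$: computing $\langle K^{(\bl,\bmu)}(\cdot,\bz_1)\zeta_1, U_{g^{-1}}K^{(\bl,\bmu)}(\cdot,\bz_2)\zeta_2\rangle$ in two ways, using the reproducing property, the unitarity of $U_g$, and the cocycle identity $J(g^{-1},g\bw)=J(g,\bw)^{-1}$, one arrives at the quasi-invariance relation
$$K^{(\bl,\bmu)}(\bz, \bw) = J(g, \bz)\, K^{(\bl,\bmu)}(g\bz, g\bw)\, J(g, \bw)^*.$$
Since Theorem \ref{thm5} ensures that $(M_{z_1}, \ldots, M_{z_n})$ is a commuting tuple of bounded operators on $\bA^{(\bl,\bmu)}(\D^n)$ and the explicit kernel in Proposition \ref{prop2} makes it routine to see that this tuple lies in $\mathrm B_3(\D^n)$, the cited criterion then forces homogeneity under $\mob^n$.

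There is no real obstacle here: the theorem is essentially the conjunction of its two predecessors, and the only care needed is bookkeeping of the convention (the formula in Proposition \ref{prop10} produces the multiplier that corresponds to the action of $g^{-1}$, and one must match this with the convention used in defining quasi-invariance so that the cocycle identity is applied in the correct direction).
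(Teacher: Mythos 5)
Your proposal is correct and follows essentially the same route as the paper, which simply states that the theorem "follows from Theorem \ref{thm5} and Proposition \ref{prop10}"; you have filled in the standard intermediate steps (unitarity of $D^+_{\bl}$, transport by the isometry $\G$, passage from the multiplier representation to quasi-invariance of $K^{(\bl,\bmu)}$, and the criterion of [OICHO, Theorem 3.1]). The only cosmetic remark is that one can also conclude directly from the intertwining relation $U_{g^{-1}} M_{z_i} U_{g^{-1}}^{-1} = M_{g_i(z_i)}$ without routing through the kernel, but your bookkeeping of the $g$ versus $g^{-1}$ convention is exactly the care the argument requires.
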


We conclude this subsection by proving that these $n$ - tuples of multiplication operators are irreducible. The lemma below, modelled after Lemma 5.1 in \cite{HOHS}, can be proved exactly in the same way as in the original proof, so it is omitted.

\begin{lem}
Suppose that the tuple of multiplication operators $\M=(M_{z_1},\hdots, M_{z_n})$ on a reproducing kernel Hilbert space $\H$ with the reproducing kernel $K$ is in $\mathrm B_r(\D^n)$. If there exists an orthogonal projection $X$ commuting with the operator tuple $\M$ then 
$$\Phi_X(\bz)K(\bz,\bw)~=~K(\bz,\bw)\ov{\Phi_X(\bw)}^{\text{tr}}$$ for some holomorphic function $\Phi_X:\D^n\ra\C^{r\times r}$ with $\Phi_X^2=\Phi_X$.
\end{lem}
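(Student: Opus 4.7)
The plan is to recover $\Phi_X$ from how the self-adjoint projection $X$ acts on the joint eigenvectors $K(\cdot,\bw)e_i$, which by the Cowen-Douglas hypothesis span the fibres of the bundle attached to $\M^{*}$. Since $X=X^{*}$ commutes with every $M_{z_j}$, it also commutes with every $M_{z_j}^{*}$, and consequently $X$ preserves the $r$-dimensional joint eigenspace
$$N_{\bw}:=\bigcap_{j=1}^{n}\ker\bigl(M_{z_j}^{*}-\bar w_j I\bigr),\qquad \bw\in\D^{n},$$
which is spanned by $K(\cdot,\bw)e_1,\ldots,K(\cdot,\bw)e_r$. This invariance supplies a uniquely determined $r\times r$ matrix $A(\bw)$ satisfying
$$XK(\cdot,\bw)=K(\cdot,\bw)\,A(\bw),$$
the equality being understood column by column.

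Next I would use $X=X^{*}$ to extract an intertwining identity. Pairing $XK(\cdot,\bw)e_i$ with $K(\cdot,\bz)e_k$ in the inner product of $\H$ and shifting $X$ to the other slot, both sides expand linearly in the entries of $A(\bw)$ and $A(\bz)$ and collapse to
$$K(\bz,\bw)\,A(\bw)=A(\bz)^{*}\,K(\bz,\bw).$$
Setting $\Phi_X(\bz):=A(\bz)^{*}$, this is precisely the required relation $\Phi_X(\bz)K(\bz,\bw)=K(\bz,\bw)\ov{\Phi_X(\bw)}^{\text{tr}}$.

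For the holomorphicity of $\Phi_X$, fix $\bw_0\in\D^{n}$. Since $K(\bw_0,\bw_0)$ is positive definite, the matrix-valued function $\bz\mapsto K(\bz,\bw_0)$ is holomorphic and invertible on some neighbourhood $U$ of $\bw_0$. Rewriting the intertwining relation as
$$\Phi_X(\bz)=K(\bz,\bw_0)\,A(\bw_0)\,K(\bz,\bw_0)^{-1},\qquad \bz\in U,$$
exhibits $\Phi_X$ as holomorphic on $U$, and covering $\D^{n}$ by such neighbourhoods gives global holomorphicity. The idempotent relation $\Phi_X^{2}=\Phi_X$ is then immediate from $X^{2}=X$: applying $X$ twice to $K(\cdot,\bw)$ yields $K(\cdot,\bw)A(\bw)^{2}=K(\cdot,\bw)A(\bw)$, and the linear independence of the columns $K(\cdot,\bw)e_i$ forces $A(\bw)^{2}=A(\bw)$, hence $\Phi_X(\bz)^{2}=\Phi_X(\bz)$.

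The only mildly delicate point I anticipate is the holomorphicity of $\Phi_X$; once the intertwining relation is in hand, however, it reduces to the standard fact that $K(\bz,\bw_0)$ is invertible for $\bz$ near $\bw_0$, which follows from the positive-definiteness of the reproducing kernel at the diagonal. The rest is bookkeeping with adjoints and the $r$-dimensionality of the joint eigenspace $N_{\bw}$ guaranteed by the Cowen-Douglas hypothesis.
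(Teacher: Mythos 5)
Your argument is correct and follows the same route as the proof the paper cites (Lemma 5.1 of \cite{HOHS}, whose proof the authors omit): the self-adjoint projection commutes with the $M_{z_j}^*$, hence preserves the joint eigenspaces spanned by the columns of $K(\cdot,\bw)$, which produces the matrix function; self-adjointness then yields the intertwining identity, local invertibility of $K(\cdot,\bw_0)$ gives holomorphy, and $X^2=X$ gives idempotency. All the individual steps (the pairing computation, the identification $\Phi_X=A^*$, and the local formula $\Phi_X(\bz)=K(\bz,\bw_0)A(\bw_0)K(\bz,\bw_0)^{-1}$) check out.
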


Thus $\M$ on $\bA^{(\bl,\bmu)}(\D^n)$ is irreducible if and only if there is no non-trivial projection $X_0$ on $\C^r$ satisfying 
$$
X_0K^{(\bl,\bmu)}_0(\bz,0)^{-1}K^{(\bl,\bmu)}_0(\bz,\bw)K^{(\bl,\bmu)}_0(0,\bw)^{-1}~=~K^{(\bl,\bmu)}_0(\bz,0)^{-1}K^{(\bl,\bmu)}_0(\bz,\bw)K^{(\bl,\bmu)}_0(0,\bw)^{-1}X_0.  
$$
where $K^{(\bl,\bmu)}_0(\bz,\bw)=K^{(\bl,\bmu)}(0,0)^{-\frac{1}{2}}K^{(\bl,\bmu)}(\bz,\bw)K^{(\bl,\bmu)}(0,0)^{-\frac{1}{2}}$. Let $\hat{K}_0^{(\bl,\bmu)}(\bz,\bw)$, called the normalized kernel, denote the kernel $K^{(\bl,\bmu)}_0(\bz,0)^{-1}K^{(\bl,\bmu)}_0(\bz,\bw)K^{(\bl,\bmu)}_0(0,\bw)^{-1}$.

\begin{thm}\label{thm12}
The tuple of multiplication operators $\M=(M_{z_1},\hdots,M_{z_n})$ on $\bA^{(\bl,\bmu)}(\D^n)$ corresponding to the coordinate functions are irreducible.
\end{thm}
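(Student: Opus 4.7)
The plan is to apply the preceding lemma, which reduces the irreducibility of $\M$ to the statement that the only orthogonal projection $X_0 \in \mathrm M(3,\C)$ satisfying
\[
X_0\,\hat K_0^{(\bl,\bmu)}(\bz,\bw) \;=\; \hat K_0^{(\bl,\bmu)}(\bz,\bw)\,X_0, \qquad \bz,\bw \in \D^n,
\]
is $0$ or $I_3$. In fact I would prove the stronger statement that every matrix $X_0 \in \mathrm M(3,\C)$ with this property is a scalar multiple of $I_3$.

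My first move is to exploit a tensor-product decomposition. Since $\delta_{i-1,j}=0$ for every $i\in\{1,2,3\}$ and every $j\geq 3$, the construction of $\bA^{(\bl,\bmu)}(\D^n)$ via the $\G^{(1)}$ map gives an isometric identification
\[
\bA^{(\bl,\bmu)}(\D^n) \;\cong\; \bA^{((\l_1,\l_2),(\mu_1,\mu_2))}(\D^2)\,\otimes\,\A^{(\l_3)}\otimes\cdots\otimes \A^{(\l_n)},
\]
under which $\M$ becomes the simultaneous tensor product
\[
(M_{z_1}\otimes I\otimes\cdots\otimes I,\; M_{z_2}\otimes I\otimes\cdots\otimes I,\; I\otimes I\otimes M^{(\l_3)}\otimes\cdots\otimes I,\;\ldots,\; I\otimes\cdots\otimes I\otimes M^{(\l_n)}).
\]
Each $M^{(\l_j)}$ on $\A^{(\l_j)}$ is the standard homogeneous operator in $\mathrm B_1(\D)$ and is irreducible. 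Iterating Lemma \ref{Projections commuting with an irr tensor with identity op} therefore reduces the problem to proving that the pair $(M_{z_1},M_{z_2})$ on $\bA^{((\l_1,\l_2),(\mu_1,\mu_2))}(\D^2)$ is itself irreducible.

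For this two-variable case I would compute the normalised kernel $\hat K_0^{((\l_1,\l_2),(\mu_1,\mu_2))}((z_1,z_2),(w_1,w_2))$ by inverting the triangular matrices $K((\cdot,\cdot),(0,0))$ and $K((0,0),(\cdot,\cdot))$ supplied by Proposition \ref{prop2}, and then expand it as a power series in $z_1,z_2,\bar w_1,\bar w_2$. Commutation of $X_0$ with $\hat K_0$ at every $(\bz,\bw)$ is equivalent to commutation with each Taylor coefficient. Mirroring the computation carried out in the proof of Theorem \ref{Product kernels are not homogeneous under the full aut gp}, the diagonal curvature components $\mathcal K^{11}(0)$ and $\mathcal K^{22}(0)$ are diagonal matrices whose joint eigenvalue data separates the three coordinate axes of $\C^3$ for every admissible $\l_1,\l_2,\mu_1,\mu_2>0$; this forces $X_0$ to be diagonal. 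A further coefficient --- $\mathcal K^{12}(0)$ or, if needed, a higher-order mixed coefficient such as that of $z_1\bar w_1\bar w_2$ or $z_1z_2\bar w_1\bar w_2$ --- carries a nonzero nilpotent pattern linking all three diagonal positions, and commutation with it forces the diagonal entries of $X_0$ to coincide.

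The hard part is the combinatorial ``connectedness'' of these off-diagonal Taylor data: one needs to verify that a small collection of coefficients of $\hat K_0$ genuinely mixes all three coordinate directions of $\C^3$. The calculations themselves are entirely routine from Proposition \ref{prop2}, but the subtlety requiring care is uniformity in the parameters $\l_j,\mu_j>0$ --- one must rule out pathological parameter values where some expected off-diagonal entry accidentally vanishes, and if necessary invoke additional higher-order Taylor coefficients to supply the missing link.
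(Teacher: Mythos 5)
Your proposal is correct and follows essentially the same route as the paper: both arguments reduce, via the preceding lemma, to showing that the only matrices in $\mathrm{M}(3,\C)$ commuting with the normalized kernel $\hat{K}_0^{(\bl,\bmu)}(\bz,\bw)$ for all $\bz,\bw$ are scalars, and then extract this from a small number of off-diagonal entries of $\hat{K}_0^{(\bl,\bmu)}$. The only differences are cosmetic: the paper evaluates $\hat{K}_0^{(\bl,\bmu)}$ at the two point-pairs $\bz=(z,0,\hdots,0),\ \bw=(0,z,0,\hdots,0)$ and $\bz=\bw=(z,0,\hdots,0)$ rather than passing to Taylor (curvature) coefficients, and its explicit formulas --- the $(3,2)$ entry $\sqrt{\alpha_1/\alpha_2}\,|z|^2$ and the $(1,2)$ entry $-(1-|z|^2)^{-\l_1-2}\frac{\sqrt{\alpha_2}}{\alpha_1}\,z|z|^2\bigl(1+\frac{1}{\alpha_1}\bigr)$ with $\alpha_j=\frac{1}{\l_j}+\mu_j^2$ --- are visibly nonzero for every $\l_j,\mu_j>0$, which settles the ``pathological parameter'' concern you flag, while your preliminary tensor-factorization over the variables $z_3,\hdots,z_n$ is valid but unnecessary since these two entries already force the commuting projection to be scalar.
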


\begin{proof}
Since the normalized kernel $\hat{K}_0^{(\bl,\bmu)}(\bz,\bw)$ is equivalent to $K^{(\bl,\bmu)}(\bz,\bw)$, it is enough to show that the tuple of multiplication operators $(M_{z_1},\hdots,M_{z_n})$ on the \rkhs corresponding to $\hat{K}_0^{(\bl,\bmu)}(\bz,\bw)$ is irreducible. It  amounts to show that if a self-adjoint projection matrix, say $A=(\!(a_{ij})\!)_{i,j=1}^3$ commutes with $\hat{K}_0^{(\bl,\bmu)}(\bz,\bw)$ for all $\bz,\bw\in\D^n$ then $A$ is $\pm I_3$ where $I_3$ is the identity matrix of order $3$.

For $\bz=(z,0,\hdots,0)$ and $\bw=(0,z,0,\hdots,0)$ with $z\in\D$, we have that
\[\hat{K}_0^{(\bl,\bmu)}(\bz,\bw)=
\begin{pmatrix}
      1 & 0 & 0 \\
      0 & 1    &   0 \\
      0 & \sqrt{\dfrac{\a_1}{\a_2}}|z|^2 & 1 \\
    \end{pmatrix}
\]
where $\a_j=\frac{1}{\l_j}+\mu_j^2$, $j=1,2$. Assume that $A$ commutes with $\hat{K}_0^{(\bl,\bmu)}(\bz,\bw)$. Then equating the $(1,3)$, $(1,2)$, $(2,2)$ and $(2,3)$ entries of the commutator of $A$ and $\hat{K}_0^{(\bl,\bmu)}(\bz,\bw)$ to $0$ we have that $a_{12}=a_{13}=a_{23}=0$ and $a_{22}=a_{33}$, respectively. Thus the matrix $A$ becomes a diagonal matrix with $a_{22}=a_{33}$. Now we take $\bz=(z,0,\hdots,0)=\bw$ and compute the $(1,2)$ entry of $\hat{K}_0^{(\bl,\bmu)}(\bz,\bw)$ which is 
$$
(1-|z|^2)^{-\l_1-2}\sqrt{\a_2}\left[\dfrac{z}{\a_1}(1-|z|^2)-\dfrac{z}{\a_1^2}(\a_1+|z|^2)\right]=
%(1-|z|^2)^{-\l_1-2}\dfrac{\sqrt{\a_2}}{\a_1}z\left[1-|z|^2-1-\dfrac{|z|^2}{\a_1}\right]\\
-(1-|z|^2)^{-\l_1-2}\dfrac{\sqrt{\a_2}}{\a_1}z|z|^2\left(1+\dfrac{1}{\a_1}\right)
\neq 0.
$$
Now since $A$ commutes with $\hat{K}_0^{(\bl,\bmu)}(\bz,\bw)$ we have that $a_{11}=a_{22}$ implying that $A=cI_{3\times 3}$ for some constant $c$. But since $A$ is a projection $c=\pm1$.
\end{proof}

\subsection{Inequivalence}

We have pointed out in the beginning of this section, the adjoint of the multiplication operators on the \rkhs $\H_{\hat{K}}$ are homogeneous. In this subsection, we show that the multiplication operators by coordinate functions on the \rkhs $\H_{\hat{K}}$ and those on the Hilbert space $\bA^{(\bl,\bmu)}(\D^n)$ are unitarily \textit{inequivalent}. Moreover, the multiplication operators by the coordinate functions on the two Hilbert spaces $\bA^{(\bl,\bmu)}(\D^n)$ and $\bA^{(\bl',\bmu')}(\D^n)$ 
are inequivalent whenever $(\bl,\bmu)\neq(\bl',\bmu')$.

\begin{thm}\label{thm13}
The $n$ - tuple of multiplication operators by coordinate functions on $\H_{\hat{K}}$ and those on $\bA^{(\bl',\bmu')}(\D^n)$ are unitarily inequivalent irrespective of the choice of  $\bl, \bl', \bmu, \bmu'$, where
$$\hat{K}(\bz,\bw):=\mathbf{B}^{(\l_1,\bmu)}(z_1,w_1)\cdot\prod_{i=2}^n(1-z_i\ov{w}_i)^{-\l_i}$$as before.
\end{thm}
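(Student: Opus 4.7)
The plan is to exhibit a curvature invariant that distinguishes the two hermitian holomorphic bundles and then invoke the Cowen--Douglas correspondence between unitary equivalence classes of tuples in $\mathrm B_r(\D^n)$ and local hermitian equivalence classes of bundles. Since the curvature blocks $\mathcal{K}^{ij}(\bw)$ transform by simultaneous similarity $\mathcal{K}^{ij}\mapsto g(\bw)^{-1}\mathcal{K}^{ij}g(\bw)$ under a change of holomorphic frame, the vanishing of a given block at a given point is a well-defined isomorphism invariant. I will show that the block $\mathcal{K}^{12}(0)$ is identically zero for $\hat K$, whereas it is a nonzero nilpotent for $K^{(\bl',\bmu')}$, irrespective of the choice of parameters.

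For $\hat K$ the vanishing is transparent from the tensor product structure. Since $\hat K(\bz,\bw)=\mathbf B^{(\l_1,\bmu)}(z_1,w_1)\prod_{i\geq 2}(1-z_i\bar w_i)^{-\l_i}$, the scalar factors cancel in the product $\hat K(\bw,\bw)^{-1}\overline{\partial_j}\hat K(\bw,\bw)$: for $j\geq 2$ the result reduces to a scalar multiple of $I_3$ depending only on $(w_j,\bar w_j)$, while for $j=1$ it reduces to $\mathbf B^{-1}\overline{\partial_1}\mathbf B$, which depends only on $(w_1,\bar w_1)$. Applying $\partial_i$ with $i\neq j$ annihilates any such expression, so $\mathcal{K}^{ij}_{\hat K}(\bz)\equiv 0$ for all $i\neq j$; in particular $\mathcal{K}^{12}_{\hat K}(0)=0$.

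For $K^{(\bl',\bmu')}$ I will compute $\mathcal{K}^{12}(0)$ directly from the formula in Proposition \ref{prop2}. Writing $K(\bw,\bw)=K_0+\delta K$ with $K_0=\mathrm{diag}(1,a_1,a_2)$, $a_j=1/\l'_j+(\mu'_j)^2$, only the Taylor coefficients of $\bar w_2$ and of $w_1\bar w_2$ in $K(\bw,\bw)$ matter. The $\bar w_2$ coefficient is the matrix unit $E_{31}$, read off from the $(3,1)$ entry of $M(\bw,\bw)$; the $w_1\bar w_2$ coefficient is $E_{32}$, read off from the $(3,2)$ entry $w_1\bar w_2$ of $M$. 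Crucially, the diagonal conjugator $D(\bw,\bw)$ and the scalar factors $\prod_j(1-|w_j|^2)^{-\alpha_j}$ contribute only $|w_k|^2$ corrections and therefore cannot mix $w_1$ with $\bar w_2$. The product rule then gives
$$\mathcal{K}^{12}(0)=-K_0^{-1}E_{12}K_0^{-1}E_{31}+K_0^{-1}E_{32}=\tfrac{1}{a_2}E_{32},$$
where the first term vanishes because $E_{12}E_{31}=0$.

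Combining the two computations, $\mathcal{K}^{12}(0)$ separates the two bundles for every admissible $\bl,\bl',\bmu,\bmu'$, so they cannot be locally isomorphic as hermitian holomorphic bundles; the Cowen--Douglas correspondence then delivers the unitary inequivalence of the corresponding tuples of multiplication operators. The main bookkeeping obstacle will be isolating the $w_1\bar w_2$ coefficient in $K^{(\bl',\bmu')}(\bw,\bw)$ cleanly amid the many $|w_k|^2$ contributions, but since $D$ and the scalar powers only couple each $w_j$ with its own conjugate, the cross contribution must come purely from the $(3,2)$ entry of $M$, which makes the final identification transparent.
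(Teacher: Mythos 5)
Your argument is correct and follows the same strategy as the paper: reduce unitary equivalence of the tuples to local equivalence of the associated bundles, observe that each curvature block $\mathcal{K}^{ij}(0)$ is then determined up to simultaneous similarity, and exhibit a block that differs. The only (harmless) difference is the choice of invariant --- you use the off-diagonal block $\mathcal{K}^{12}(0)$, which vanishes for $\hat K$ by the tensor-product structure but equals the nonzero nilpotent $\tfrac{1}{a_2}E_{32}$ for $K^{(\bl',\bmu')}$, whereas the paper uses the diagonal block $\mathcal{K}^{22}(0)$, which is $\lambda_2 I_3$ for $\hat K$ but has three distinct eigenvalues for $K^{(\bl',\bmu')}$; both computations are valid and conclusive.
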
 

\begin{proof}
Let $\M_{\hat{K}}$ and $\M^{(\bl', \bmu')}$ denote the $n$ - tuple of multiplication operators by the coordinate functions on $\mathcal{H}_{\hat{K}}$ and $\bA^{(\bl', \bmu')}(\D^n)$, respectively. Note that adjoint of both $\M_{\hat{K}}$ and $\M^{(\bl', \bmu')}$ are in $\mathrm B_3(\mathbb{D}^n)$. Therefore, if $\M_{\hat{K}}$ and $\M^{(\bl', \bmu')}$ are unitarily equivalent, then the hermitian holomorphic vector bundles induced by $\M_{\hat{K}}$ and $\M^{(\bl', \bmu')}$ are equivalent. This, in particular, implies from Lemma \ref{curvature at 0 is diagonal} that, for all $1 \leq i, j \leq n$ and for all $z \in \mathbb{D}^n$, $\mathcal{K}^{ij}(z) = \partial_i\left({\hat{K}}(z,z)^{-1}\bar{\partial}_j{\hat{K}}(z,z)\right)$ and $\mathcal{K}^{ij}_{(\bl', \bmu')}(z) = \partial_i\left(K^{(\bl', \bmu')}(z,z)^{-1}{\bar{\partial}}_jK^{(\bl', \bmu')}(z,z)\right)$ are similar.

A routine computation shows that $\mathcal{K}^{22}(0)=\lambda_2 I_3$ and $\mathcal{K}^{22}_{(\bl',\bmu')}(0)$ is a diagonal matrix with diagonal entries $\lambda'_2-((\l'_2)^{-1}+\mu'_2)^{-1}$, $\lambda'_2$, and $\lambda'_2 + ((\l'_2)^{-1}+\mu'_2)^{-1}+2$. Since $\mathcal{K}_{(\bl', \bmu')}^{22}(0)$ has distinct eigenvalues, $\mathcal{K}^{22}(0)$ and $\mathcal{K}^{22}_{(\bl', \bmu')}(0)$ are not similar. Thus $\M_{\hat{K}}$ and $\M^{(\bl', \bmu')}$ are not unitarily equivalent.
\end{proof}

\begin{thm}\label{thm14}
The $n$ - tuple of multiplication operators $\M^{(\bl, \bmu)}$ and $\M^{(\bl^\prime, \bmu^\prime)}$ are unitarily equivalent if and only if $\bl = \bl^\prime$ and $\bmu = \bmu^\prime$. 
\end{thm}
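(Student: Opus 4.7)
The ``if'' direction is immediate since $\bA^{(\bl,\bmu)}(\D^n)=\bA^{(\bl',\bmu')}(\D^n)$ as reproducing kernel Hilbert spaces when $(\bl,\bmu)=(\bl',\bmu')$. For the ``only if'' direction, the plan is to exploit that both tuples have adjoints in $\mathrm B_3(\D^n)$, so unitary equivalence of $\M^{(\bl,\bmu)}$ and $\M^{(\bl',\bmu')}$ forces equivalence of the associated hermitian holomorphic vector bundles on $\D^n$. In particular, at the origin the curvature matrices $\mathcal K^{ii}(0)$ and $\mathcal K^{ii}_{(\bl',\bmu')}(0)$ must be similar for every $1\leq i\leq n$, so their eigenvalues must coincide. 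The strategy is therefore to compute these eigenvalues explicitly and to show that they completely determine $(\bl,\bmu)$.

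First I would handle the ``product'' coordinates $j\geq 3$. Since
\[
K^{(\bl,\bmu)}(\bz,\bw)=K^{((\l_1,\l_2),(\mu_1,\mu_2))}((z_1,z_2),(w_1,w_2))\prod_{k=3}^{n}(1-z_k\ov{w}_k)^{-\l_k},
\]
for any such $j$ the scalar factor $(1-z_j\ov{w}_j)^{-\l_j}$ separates from the matrix part and commutes with $K^{(\bl,\bmu)}(\bz,\bz)^{-1}$. A short calculation then gives $\mathcal K^{jj}(0)=\l_j I_3$, so the single eigenvalue $\l_j$ is recovered and hence $\l_j=\l'_j$ for $j\geq 3$.

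The main work is in computing $\mathcal K^{11}(0)$ and $\mathcal K^{22}(0)$. Setting $z_k=0$ for all $k\neq 2$ and using the explicit formula from Proposition~\ref{prop2}, the relevant $3\times 3$ kernel matrix becomes block-diagonal (the middle row and column decouple), and at the origin one obtains
\[
K^{(\bl,\bmu)}(0,0)=\mathrm{diag}\Bigl(1,\;\tfrac{1}{\l_1}+\mu_1^2,\;\tfrac{1}{\l_2}+\mu_2^2\Bigr).
\]
Writing $\a_j=\tfrac{1}{\l_j}+\mu_j^2$, a direct computation of $\partial_2\bigl[K^{-1}\bar\partial_2 K\bigr]\big|_0$ using $\partial_i K^{-1}=-K^{-1}(\partial_i K)K^{-1}$ yields
\[
\mathcal K^{22}(0)\sim\mathrm{diag}\bigl(\l_2-\a_2^{-1},\;\l_2,\;\l_2+2+\a_2^{-1}\bigr),
\]
and symmetrically
\[
\mathcal K^{11}(0)\sim\mathrm{diag}\bigl(\l_1-\a_1^{-1},\;\l_1,\;\l_1+2+\a_1^{-1}\bigr).
\]
Each of these three-element spectra is a set of three distinct real numbers (since $\mu_j>0$ forces $\a_j^{-1}>0$), whose middle value reveals $\l_j$ and whose outer spacings $\a_j^{-1}$ then determine $\mu_j>0$ uniquely. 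Comparing spectra for the primed and unprimed tuples therefore gives $\l_j=\l'_j$ and $\mu_j=\mu'_j$ for $j=1,2$.

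The main obstacle is purely computational: carrying out the $\partial_2\bar\partial_2$ derivative of the $3\times 3$ matrix kernel at the origin without error, since the entries involve products of $(1-|z_2|^2)^{-\l_2-2}$ with quadratic and constant factors. The slight subtlety is that unitary equivalence of the operator tuples only gives \emph{simultaneous} conjugation of all the $\mathcal K^{ij}(0)$ by a single invertible matrix; it is not a priori obvious that each $\mathcal K^{ii}(0)$ is similar to $\mathcal K^{ii}_{(\bl',\bmu')}(0)$ individually. This is exactly where we invoke the passage from unitary equivalence of tuples in $\mathrm B_3(\D^n)$ to equivalence of hermitian holomorphic vector bundles, which does yield simultaneous similarity and hence the required equality of spectra of each $\mathcal K^{ii}(0)$.
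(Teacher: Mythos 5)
Your proposal is correct and follows essentially the same route as the paper: unitary equivalence of the tuples in $\mathrm B_3(\D^n)$ gives equivalence of the associated hermitian holomorphic vector bundles, hence simultaneous similarity of the curvature components $\mathcal K^{ii}(0)$, whose spectra $\{\l_i-\a_i^{-1},\,\l_i,\,\l_i+2+\a_i^{-1}\}$ (for $i=1,2$) recover $\l_i$ and $\mu_i$. Your explicit treatment of the coordinates $j\geq 3$ via $\mathcal K^{jj}(0)=\l_j I_3$ is a small completeness point the paper leaves implicit, but the argument is otherwise the same.
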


\begin{proof}
Since the adjoint of both $\M^{(\bl, \bmu)}$ and $\M^{(\bl^\prime, \bmu^\prime)}$ are in $\mathrm B_3(\mathbb{D}^n)$, it follows that the hermitian holomorphic vector bundles induced by $\M^{(\bl, \bmu)}$ and $\M^{(\bl^\prime, \bmu^\prime)}$ are equivalent whenever $\M^{(\bl, \bmu)}$ and $\M^{(\bl^\prime, \bmu^\prime)}$ are unitarily equivalent. Consequently, $\mathcal{K}^{ij}_{(\bl, \bmu)}(z) = \partial_i\left(K^{(\bl, \bmu)}(z,z)^{-1}{\bar{\partial}}_jK^{(\bl, \bmu)}(z,z)\right)$ and $\mathcal{K}^{ij}_{(\bl^\prime, \bmu^\prime)}(z) = \partial_i\left(K^{(\bl^\prime, \bmu^\prime)}(z,z)^{-1}{\bar{\partial}}_jK^{(\bl^\prime, \bmu^\prime)}(z,z)\right)$ are similar for all $1 \leq i, j \leq n$ and $z \in \mathbb{D}$. It can be seen that $\lambda_1-(\l_1^{-1}+\mu_1)^{-1}$, $\lambda_1$ and $\lambda_1+(\l_1^{-1}+\mu_1)^{-1}+2$ (respectively, $\lambda'_1-((\l'_1)^{-1}+\mu'_1)^{-1}$, $\lambda'_1$, and $\lambda'_1 + ((\l'_1)^{-1}+\mu'_1)^{-1}+2$) are eigenvalues of $\mathcal{K}^{11}_{(\bl, \bmu)}(0)$ (respectively $\mathcal{K}^{11}_{(\bl', \bmu')}(0)$).
Now equating the trace and determinant of $\mathcal{K}^{11}_{(\bl, \bmu)}(0)$ and $\mathcal{K}^{11}_{(\bl^\prime, \bmu^\prime)}(0)$ it can be seen that $\l_1=\l_1^{\prime}$ and $\mu_1 = \mu_1^{\prime}$, respectively. 

A similar computation shows that $\lambda_2 = \lambda^\prime_2$ and $\mu_2 = \mu_2^\prime$. Thus it follows that if $\M^{(\bl, \bmu)}$ and $\M^{(\bl^\prime, \bmu^\prime)}$ are unitarily equivalent then $\bl = \bl^\prime$ and $\bmu = \bmu^\prime$. 
\end{proof}

\subsection{Construction of Type II Hilbert spaces} Let $\a_1,\hdots,\a_n>0$ and $\ba=(\a_1,\hdots,\a_n)$. Consider the Hilbert spaces $\H_1^{(\ba)}$, $\H_2^{(\ba)}$ and $\H_3^{(\ba)}$ of holomorphic functions on $\D^n$ which are, by definition,
$$\H_1^{(\ba)}:=\otimes_{j=1}^n\A^{(\a_j+2\delta_{2j})},~\H_2^{(\ba)}:=\otimes_{j=1}^n\A^{(\a_j+2\delta_{1j})},~\text{and}~\H_3^{(\ba)}:=\otimes_{j=1}^n \A^{(\a_j+2\delta_{1j} + 2\delta_{2j})}.$$ Let $\Gamma^{(2)}_i$, $i=1,2,3$, be the linear map on the Hilbert space $\H_i^{(\ba)}$ taking values in Hol$(\D^n,\C^3)$ defined as follows:
$$\G^{(2)}_1(\bf_1)=\left(
    \begin{array}{c}
       \bf_1 \\
       0\\
       \l_1^{-1}\del_1\bf_1 \\
         \end{array}
    \right),~~~
\G^{(1)}_2(\bf_2)=\left(
    \begin{array}{c}
       0 \\
       \bf_2 \\
      \l_2^{-1}\del_2\bf_2 \\
    \end{array}
    \right),~~~\text{and}~~~
\G^{(1)}_3(\bf_3)=\left(
    \begin{array}{c}
       0 \\
       0 \\
       \bf_3  \\
    \end{array}
    \right)    
$$
where $\bf_i\in\H_i^{(\ba)}$. Note that as in case \textbf{(I)} each $\G^{(2)}_i$, $i=1,2,3$, gives rise to the Hilbert spaces $\G^{(2)}_i(\H_i^{(\ba)})$ to be denoted by $\bH_i^{(\ba)}\subset \text{Hol}(\D^n,\C^3)$. Furthermore, the point evaluation, $\G^{(2)}_i(\bf_i)\mapsto\G^{(2)}_i(\bf_i)(\bz)$, for $\bz\in\D^n$ and $i=1,2,3$, are continuous which makes $\bH_i^{(\ba)}$ a \rkhs with the reproducing kernel, say $L_i^{(\ba)}$.

Let us now consider the Hilbert space $\H^{(\ba)}(\D^n):=\oplus_{i=1}^3\H_i^{(\ba)}$ and define the linear map $\G^{(2)}:\H^{(\ba)}(\D^n)\longrightarrow \text{Hol}(\D^n,\C^3)$ as follows:
$$\G^{(2)}(\bf_1,\bf_2,\bf_3):=\sum_{i=1}^3 \G^{(2)}_i(\bf_i).$$ We note that $\G^{(2)}$ is also an one-to-one linear map onto it's image. So, for any $\b_1,\b_2>0$, as before, we define an inner product on $\G^{(2)}(\bH^{(\ba)}(\D^n))$ making it a Hilbert space, $\bH^{(\ba,\bb)}(\D^n)$, in the following manner:
$$\<\G^{(2)}(\bf_1,\bf_2,\bf_3),\G^{(2)}(\bg_1,\bg_2,\bg_3)\>:=\<\G^{(2)}_1(\bf_1),\G^{(2)}_1(\bg_1)\>+\sum_{i=2}^3\b_{i-1}^2\<\G^{(2)}_i(\bf_i),\G^{(2)}_i(\bg_i)\>.$$ Moreover, $\bH^{(\ba,\bb)}(\D^n)$ is a \rkhs with the reproducing kernel 
\beq\label{eqn1x} 
L^{(\ba,\bb)}(\bz,\bw)=L_1^{(\ba)}(\bz,\bw)+\b_1^2L_2^{(\ba)}(\bz,\bw)+\b_2^2L_3^{(\ba)}(\bz,\bw), \text{ for }\bz,\bw\in\D^n.\eeq
Suppose that $\M^{(\ba, \bb)}$ denotes the tuple of multiplication operators on the Hilbert space $\bH^{(\ba,\bb)}(\D^n)$. In the following theorem, we show that the adjoint of $\M^{(\ba, \bb)}$ also constitutes another continuous family of irreducible $n$ - tuple of operators in $\mathrm B_3(\D^n)$ which are homogeneous \w the group $\mob^n$. Since similar techniques as described in the proofs of Proposition \ref{prop2}, Proposition \ref{prop10} and Theorem \ref{thm5}, Theorem \ref{thm11}, Theorem \ref{thm12}, Theorem \ref{thm13}, Theorem \ref{thm14} with $\bA^{(\bl,\bmu)}(\D^n)$, $D^+_{\bl}$, $\M^{(\bl, \bmu)}$ replaced by $\bH^{(\ba,\bb)}(\D^n)$, $D^+_{\ba}$, $\M^{(\ba, \bb)}$ also yield the following theorem, the proof is omitted. Here $D^+_{\ba}$ is defined as $$\label{not7.1}D^+_{\ba}:=\left(D^+_{\a_1}\otimes D^+_{\a_2+2} \oplus D^+_{\a_1+2}\otimes D^+_{\a_2} \oplus D^+_{\a_1+2}\otimes D^+_{\a_2+2}\right)\otimes_{j=3}^nD^+_{\a_j}.$$

\begin{thm}\label{thm14x}
(i) The reproducing kernel of the Hilbert space $\bH^{(\ba,\bb)}(\D^n)$ is 
\begin{flalign*}
L^{(\ba,\bb)}(\bz,\bw) &:=
    D_2(zw)\begin{pmatrix}
    1 & 0 & z_1\\
    0 & \b_1^2 & \b_1^2z_2\\
    \ov{w}_1 & \b_1^2\ov{w}_2 & z_1\ov{w}_1+\b_1^2z_2\ov{w}_2 + \frac{1}{\a_1}+\frac{\b_1^2}{\a_2}+\b_2^2
    \end{pmatrix}D_2(zw)
\\
  &\times\prod_{j=1}^n(1-z_j\ov{w}_j)^{-\a_j-2\delta_{1j}-2\delta_{2j}}
\end{flalign*}
where $D_2(\bz,\bw)$ is the $3\times 3$ diagonal matrix diag$((1-z_1\ov{w}_1),(1-z_2\ov{w}_2),1)$ and $\delta$ is the Kronecker delta function.
  
(ii) The image of $D^+_{\ba}$ under the map $\G^{(2)}$ is a multiplier representation with the multiplier given by 
\[J^{(2)}(g,\bz):=  \begin{pmatrix}
      (g'_1)^{\frac{\a_1}{2}}(g'_2)^{\frac{\a_2+2}{2}} & 0 & 0\\
 0     & (g'_1)^{\frac{\a_1+2}{2}}(g'_2)^{\frac{\a_2}{2}} & 0 \\
-c_{g_1}(g'_1)^{\frac{\a_1+1}{2}}(g'_2)^{\frac{\a_2+2}{2}} & -c_{g_2}(g'_1)^{\frac{\a_1+2}{2}}(g'_2)^{\frac{\a_2+1}{2}}  & (g'_1)^{\frac{\a_1+2}{2}}(g'_2)^{\frac{\a_2+2}{2}} \\
    \end{pmatrix}\prod_{j=3}^n (g'_{j})^{\frac{\a_j}{2}}(\bz)\]
 where $c_{g_i}$ is determined by the equation \eqref{eqn9}.
 % and $$D^+_{\ba}:=\left(D^+_{\a_1}\otimes D^+_{\a_2+2} \oplus D^+_{\a_1+2}\otimes D^+_{\a_2} \oplus D^+_{\a_1+2}\otimes D^+_{\a_2+2}\right)\otimes_{j=3}^nD^+_{\a_j}.$$
 
(iii) For any $n$-tuple of positive real numbers $\ba = (\a_1, \a_2, \ldots, \a_n)$ and $\b_1, \b_2 > 0$, $\M^{(\ba,\bb)}$ is bounded, homogeneous, irreducible and unitarily inequivalent to the $n$-tuple of multiplication operators on the reproducing kernel Hilbert space $\H_{\hat{K}}$.

(iv) The $n$-tuple of multiplication operators $\M^{(\ba, \bb)}$ and $\M^{(\ba^\prime, \bb^\prime)}$ on $\bH^{(\ba,\bb)}(\D^n)$ and $\bH^{(\ba',\bb')}(\D^n)$, respectively are unitarily equivalent if and only if $\ba= \ba^\prime$ and $\bb = \bb^\prime$. 
\end{thm}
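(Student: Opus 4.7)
The plan is to mirror the Type I proofs (Proposition \ref{prop2}, Proposition \ref{prop10}, Theorems \ref{thm5}, \ref{thm11}, \ref{thm12}, \ref{thm13}, \ref{thm14}) with the obvious substitutions, the only subtlety being that the maps $\G^{(2)}_1, \G^{(2)}_2$ both place a (scalar multiple of a) partial derivative into the \emph{third} coordinate, so the three pieces $L^{(\ba)}_i$ interact through that slot rather than in separate rows as in Type I.

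For part (i), I would compute $L^{(\ba)}_i$ individually from the reproducing property. Since $\G^{(2)}_3$ is just inclusion into the third coordinate, $L^{(\ba)}_3$ has only a $(3,3)$ entry, namely $\prod_j(1-z_j\ov{w}_j)^{-\a_j-2\delta_{1j}-2\delta_{2j}}$. For $L^{(\ba)}_1$, writing $\tilde K_1(\bz,\bw):=\prod_j(1-z_j\ov{w}_j)^{-\a_j-2\delta_{2j}}$, I would guess
\[ L^{(\ba)}_1(\bz,\bw)=\begin{pmatrix}\tilde K_1 & 0 & \a_1^{-1}\dbar_1\tilde K_1\\ 0 & 0 & 0\\ \a_1^{-1}\del_1\tilde K_1 & 0 & \a_1^{-2}\del_1\dbar_1\tilde K_1\end{pmatrix}, \]
and verify the identity $\langle \G^{(2)}_1(\bf),L^{(\ba)}_1(\cdot,\bw)\epsilon_k\rangle=\langle\G^{(2)}_1(\bf)(\bw),\epsilon_k\rangle$ for $k=1,2,3$ by the same manipulation as in Proposition \ref{prop2}; $L^{(\ba)}_2$ is the analogous matrix with the roles of $z_1,z_2$ and $\a_1,\a_2$ swapped. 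Plugging into \eqref{eqn1x}, expanding the derivatives of $(1-z_i\ov w_i)^{-\a_i-2}$ at $\bz=\bw$ and then polarising gives the claimed closed form, where the diagonal prefactor $D_2$ absorbs the Jacobian factors $(1-z_i\ov w_i)$.

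For part (ii), I would compute $\G^{(2)}_i(D^+_{\ba}(g^{-1})\bf)(\bz)$ by applying $\del_1$ or $\del_2$ to the product $\prod_j (g'_j)^{(\a_j+2\delta_{\cdot j})/2}(f_j\circ g_j)$ and using $\del_1 g_1'(z_1)^{\alpha}=\alpha g_1''(z_1)g_1'(z_1)^{\alpha-1}/1$ together with the identity $g''(z)=-2c_g\,g'(z)^{3/2}$ from \eqref{eqn9}; the $c_{g_1}$ and $c_{g_2}$ terms then naturally fall into the third row (not into rows two and three as in Type I), producing precisely the matrix $J^{(2)}(g,\bz)$. Part (iii) is then structured as in the Type I case: boundedness of $M_{z_1}$ and $M_{z_2}$ follows from Lemma \ref{lem3} by choosing a small $\epsilon>0$, adjusting $\b_1$ or $\b_2$ so that $L^{(\ba-\epsilon e_j,\bb')}$ remains positive definite, and then multiplying by the positive factor $(c_j-z_j\ov w_j)(1-z_j\ov w_j)^{-\epsilon}$; the remaining $M_{z_j}$, $j\geq 3$, are bounded because the kernel factors as $L\otimes\prod_{j\geq 3}(1-z_j\ov w_j)^{-\a_j}$. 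Homogeneity is then immediate from part (ii).

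Irreducibility and the two inequivalence statements (against $\H_{\hat K}$ in (iii) and between distinct parameters in (iv)) reduce, as in Theorems \ref{thm12}--\ref{thm14}, to examining the normalised kernel and the curvature at the origin. For irreducibility I would evaluate $\hat L^{(\ba,\bb)}_0(\bz,\bw)$ at pairs such as $\bz=(z,0,\dots,0)$, $\bw=(0,z,0,\dots,0)$, and $\bz=\bw=(z,0,\dots,0)$, and force a commuting projection $A=((a_{ij}))$ to be scalar by successive comparison of entries; the asymmetric placement of derivatives in $\G^{(2)}$ means the off-diagonal constraints now come from the first two columns rather than from the first row, but the argument is structurally identical. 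For the inequivalence claims I would compute $\mathcal K^{ii}_{(\ba,\bb)}(0)$, which are $3\times 3$ matrices whose eigenvalues are explicit rational functions of $\a_i,\b_1,\b_2$; comparing traces, determinants and characteristic polynomials with those of the curvature of $\H_{\hat K}$ (computed as in Theorem \ref{thm13}) and with those of $\mathcal K^{ii}_{(\ba',\bb')}(0)$ recovers the parameters uniquely. The main potential obstacle is bookkeeping in part (i): since both $\G^{(2)}_1$ and $\G^{(2)}_2$ contribute to the $(3,3)$ entry, the cross terms $\dbar_i\del_j$ $(i\neq j)$ must be tracked carefully, and I expect the $\b_1^2$ factors in the $(2,\cdot)$ and $(\cdot,3)$ entries of the claimed matrix to emerge precisely from these cross contributions; once this is carried out correctly the remaining parts are routine adaptations of the Type I arguments.
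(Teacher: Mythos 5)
Your proposal is correct and matches the paper's (omitted) proof: the authors explicitly state that Theorem \ref{thm14x} follows by the Type I arguments with $\bA^{(\bl,\bmu)}(\D^n)$, $D^+_{\bl}$, $\M^{(\bl,\bmu)}$ replaced by $\bH^{(\ba,\bb)}(\D^n)$, $D^+_{\ba}$, $\M^{(\ba,\bb)}$, and your guessed form of $L_1^{(\ba)}$, the placement of the $c_{g_i}$ terms in the third row, and the sum $L_1^{(\ba)}+\b_1^2L_2^{(\ba)}+\b_2^2L_3^{(\ba)}$ do reproduce the stated kernel and multiplier. One minor correction to your closing remark: there are no cross terms $\del_i\dbar_j$ with $i\neq j$ to track, since the images of $\G^{(2)}_1,\G^{(2)}_2,\G^{(2)}_3$ are mutually orthogonal by the very definition of the inner product on $\bH^{(\ba,\bb)}(\D^n)$, so the $\b_1^2$ factors in the $(2,\cdot)$ and $(\cdot,2)$ entries arise simply from the weight on $L_2^{(\ba)}$ rather than from any cross contributions.
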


\section{Classification of irreducible homogeneous tuples in $\mathrm B_3(\D^n)$}\label{Section 8}

In this section, we describe, up to unitary equivalence, all $n$ - tuples of operators in $\mathrm B_3(\D^n)$  homogeneous \w the group  of bi-holomorphic automorphisms on $\D^n$ as well as the identity component of the group $\aut(\D^n)$, namely, $\mob^n$. It is proved that these are the $n$ - tuples of multiplication by the coordinate functions on the {\rkhs} $\H_K$, where the reproducing kernel $K$ is either the reproducing kernel $\hat{K}$ or the reproducing kernel $K^{(\bl,\bmu)}$ obtained in the Proposition \ref{prop2} or the reproducing kernel $L^{(\ba,\bb)}$ obtained in the Theorem \ref{thm14x}(i).

We also show that there are no $\aut(\D^n)$ - homogeneous $n$-tuple of operators in $\mathrm B_3(\D^n)$ whenever $n \geq 3$. For $n=2$, the pair of operators $(M_{z_1}^*, M_{z_2}^*)$ on the Hilbert spaces $\bA^{(\bl,\bmu)}(\D^2),\,\,\l_1 = \l_2,\,\,\mu_1 = \mu_2$ and $\bH^{(\ba,\bb)}(\D^n),\,\,\a_1 = \a_2,\,\,\b_1 = 1$ are the only pairs in $\mathrm B_3(\D^2)$ that are homogeneous {\w} the group $\aut(\D^2)$.

\subsection{Homogeneity {\w}$\mob^n$.}

In this subsection, we study all $n$ - tuples of operators in $\mathrm B_3(\D^n)$ which are homogeneous under the action of $\mob^n$. We first describe all possible $3$ dimensional indecomposable representations of the Lie algebra $\mathfrak{b}^n$ which determine the cocycles associated to an irreducible $n$ - tuple of operators in $\mathrm B_3(\D^n)$ homogeneous \w the group $\mob^n$. 

Let $k \leq n$ be an arbitrary but fixed integer. We identify the Lie algebra $\mathfrak{b}^{k}$ with the subalgebra $\mathfrak{b}^{k}\oplus \{0\}\oplus\cdots\oplus\{0\}$ of $\mathfrak{b}^{n}$. Note that to verify the indecomposability of a representation $\rho$ of $\mathfrak{b}^{n}$, it is enough to consider its restriction to $\mathfrak{b}^{k}$ for some $k \leq n$,  provided the representation obtained by restricting $\rho$ to $\mathfrak{b}^k$ is multiplicity-free, as explained in the following lemma.

\begin{lem}\label{criterion for indecomposability}
Suppose that $\rho:\mathfrak{b}^{n}\ra \mathfrak{gl}(r,\C)$ is a representation such that the representation $\rho':\mathfrak{b}^{k}\ra\mathfrak{gl}(r,\C)$ defined by $\rho':=\rho|_{\mathfrak{b}^k}$ is multiplicity-free for some $k\leq n$. Then $\rho$ and $\rho'$ are simultaneously indecomposable or decomposable.
\end{lem}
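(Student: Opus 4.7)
The plan is to handle the two directions separately. The forward direction — $\rho$ decomposable implies $\rho'$ decomposable — is automatic, since any $\rho$-invariant splitting $\mathbb{C}^r = A \oplus B$ is \textit{a fortiori} invariant under the subalgebra $\mathfrak{b}^k \subseteq \mathfrak{b}^n$. The real content is the converse, and that is where the multiplicity-free hypothesis does all the work.

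Suppose $\mathbb{C}^r = A \oplus B$ is a nontrivial decomposition into $\rho'$-invariant subspaces. The key observation is that multiplicity-freeness of $\rho'$, combined with the standing assumption that each $\rho(h_i)$ is diagonalizable and the identities $[h_i, h_j] = 0$, forces $\mathbb{C}^r$ to split as a direct sum of one-dimensional simultaneous eigenspaces $V_\theta$ for $(\rho(h_1), \ldots, \rho(h_k))$, indexed by \emph{distinct} joint eigenvalues $\theta$. Any invariant subspace for this commuting family of simultaneously diagonalizable operators with simple joint spectrum is necessarily a direct sum of some of the $V_\theta$, so both $A$ and $B$ are spanned by subsets of these lines.

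Given this rigidity, invariance of $A$ and $B$ under the remaining generators $\rho(h_j)$ and $\rho(y_j)$ for $k < j \leq n$ is immediate from the bracket relations in $\mathfrak{b}^n$. Indeed, $[h_i, h_j] = 0$ and, for $i \neq j$, $[h_i, y_j] = 0$, so for every $i \leq k < j$ both $\rho(h_j)$ and $\rho(y_j)$ commute with $\rho(h_i)$. Hence each of these operators preserves every joint eigenspace $V_\theta$, and since $\dim V_\theta = 1$, it acts as a scalar there. Therefore $A$ and $B$, being direct sums of $V_\theta$'s, are preserved by $\rho(h_j)$ and $\rho(y_j)$ for all $j > k$, and so by all of $\rho(\mathfrak{b}^n)$. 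This upgrades the $\rho'$-decomposition to a $\rho$-decomposition.

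I do not anticipate any substantial obstacle: the argument is essentially a rigidity statement saying that once the joint spectrum on the chosen sub-algebra is simple, the remaining generators of $\mathfrak{b}^n$ are forced to act as scalars on each joint eigenline by the commutation relations. No structural input beyond simultaneous diagonalization of commuting diagonalizable operators and the elementary brackets $[h_i, h_j] = [h_i, y_j] = 0$ for $i \neq j$ in $\mathfrak{b}^n$ is required.
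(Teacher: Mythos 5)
Your proof is correct and follows essentially the same route as the paper: both reduce the converse direction to the observation that multiplicity-freeness forces every $\rho'$-invariant subspace to be a direct sum of the one-dimensional joint eigenspaces of $(\rho(h_1),\dots,\rho(h_k))$, which the remaining generators preserve because they commute with the $\rho(h_i)$, $i\le k$. The only cosmetic difference is that the paper packages this via an auxiliary operator $T=\sum_{i=1}^k\alpha_i\rho(h_i)$ with simple spectrum and goes on to conclude $\rho(y_j)=0$ for $j>k$, whereas you stop at the (sufficient) observation that $\rho(y_j)$ and $\rho(h_j)$ act as scalars on each joint eigenline.
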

\begin{proof}
We begin by pointing out that if $\rho$ is decomposable, so is $\rho^\prime$. Conversely, assume that $\rho'$ is decomposable. Then there exist two non-trivial subspcaes $V$ and $W$ such that $\C^r=V\oplus W$ and both $V$ and $W$ are invariant under $\rho'$. Since $\rho'$ is multiplicity-free there are scalars $\a_1,\hdots,\a_k$ such that the operator $T:=\sum_{i=1}^k\a_i\rho(h_i)$ has distinct eigenvalues. Also, the joint eigenvectors of $(\rho(h_1),\hdots,\rho(h_k))$ are eigenvectors of $T$ and both $V$ and $W$ remain invariant under $T$.

%\smallskip

Since $T$ is diagonal and $T$ has distinct eigenvalues, it follows that both $V$ and $W$ are spanned by eigenvectors of $T$. The eigenvectors of $T$ are exactly those of each $\rho(h_i)$, $1\leq i\leq n$ since $\rho(h_1),\hdots,\rho(h_n)$ are simultaneously diagonalizable. Consequently, both $V$ and $W$ are invariant under each $\rho(h_i)$, $i=1,\hdots,n$. 

For $k+1\leq j\leq n$, $\rho(y_j)$ commutes with each $\rho(h_i)$, $1\leq i\leq k$ implying that $\rho(y_j)$ commutes with $T$. Furthermore, since $T$ is a diagonal matrix with distinct diagonal entries, it follows that $\rho(y_j)$ is diagonal for each $k+1\leq j\leq n$. Thus $\rho(h_j)$ and $\rho(y_j)$ commute for each $k+1\leq j\leq n$ which together with the relation $[\rho(h_j),\rho(y_j)]=-\rho(y_j)$ yield that $\rho(y_j)=0$ for each  $k+1\leq j\leq n$ verifying that both $V$ and $W$ remain invariant under $\rho(y_j)$, $j=k+1,\hdots, n$. Thus it follows that $\rho$ is decomposable.
\end{proof}

Lemma \ref{criterion for indecomposability} actually states that an indecomposable representation $\rho$ of $\mathfrak{b}^n$ turns out to be the tensor product of it's restriction to $\mathfrak{b}^k$ for $k\leq n$ with the one dimensional representations of the remaining factors in $\mathfrak{b}^n$ whenever the restriction of $\rho$ to $\mathfrak{b}^k$ is already multiplicity-free.

%It follows that the case where the indecomposable representation of $\mathfrak{b}^n$ is already multiplicity-free on $\mathfrak{b}^k,\,\,k \leq n,$ is the case where we have the tensor product of this with the one-dimensional representations of the remaining factors in $\mathfrak{b}^n$.
%

\begin{prop}\label{description of rank 3 representation}
Suppose that $\rho:\mathfrak{b}^n\ra \mathfrak{gl}(r,\C)$ is an indecomposable representation and that $\rho|_{\mathfrak{b}^1}$ is decomposable. Then up to equivalence the following statements hold.
\begin{itemize}
\item[(i)] $\rho|_{\mathfrak{b}^2}$ is multiplicity-free and consequently, indecomposable.
\item[(ii)] $\rho(y_i)=0$ for $i\geq 3$.
\item[(iii)] $\rho(h_i)=\l_iI_3$ for $i\geq 3$, $\l_i$ are scalars.
\end{itemize}
\end{prop}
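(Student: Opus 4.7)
The plan is to locate, after possibly permuting the factors $\mathfrak{b}_2,\ldots,\mathfrak{b}_n$, a single factor $\mathfrak{b}_2$ whose action on $\C^3$ together with that of $\mathfrak{b}^1$ already carries the indecomposable structure of $\rho$; once this is done, (ii) and (iii) follow essentially by the argument that proves Lemma \ref{criterion for indecomposability}. Throughout I would work in a common eigenbasis for the pairwise commuting diagonalizable operators $\rho(h_1),\ldots,\rho(h_n)$, labelling the three common eigenvectors by their joint eigenvalues $\mu^{(1)},\mu^{(2)},\mu^{(3)}\in\C^n$, and use that $\rho(y_i)$ shifts the $i$th coordinate of a joint eigenvalue by $-1$. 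As a preliminary reduction, the contrapositive of Lemma \ref{criterion for indecomposability} gives that $\rho|_{\mathfrak{b}^1}$ is not multiplicity-free, so $\rho(h_1)$ has at most two distinct eigenvalues. Also, since $r=3>1$ and $\rho$ is indecomposable, at least one $\rho(y_i)$ must be non-zero. If only $\rho(y_1)$ were non-zero, then each $\rho(h_j)$ for $j\geq 2$ would lie in the commutant of the $\mathfrak{b}^1$-action and hence respect any splitting of $\rho|_{\mathfrak{b}^1}$; a decomposition of $\rho|_{\mathfrak{b}^1}$ would then be $\mathfrak{b}^n$-invariant and $\rho$ would be decomposable, a contradiction. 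Hence some $\rho(y_j)$ with $j\geq 2$ is non-zero, and after relabelling the factors I assume $j=2$.

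The heart of the proof is (i), the claim that $\rho|_{\mathfrak{b}^2}$ is multiplicity-free. Assume for contradiction that it is not; then two of the three joint eigenvalues $\mu^{(1)},\mu^{(2)},\mu^{(3)}$ agree in their first and second coordinates. Because $\rho(y_1)$ and $\rho(y_2)$ can send two such coincident joint eigenvectors only to an existing joint eigenvalue of $(\rho(h_1),\rho(h_2))$, and only to one and the same target pair, while the three joint eigenvalues of $\rho$ must themselves differ in some later coordinate, one finds that $\rho(y_1)$ and $\rho(y_2)$ are forced to annihilate at least one of the two coincident joint eigenvectors. Tracking how the remaining $\rho(y_j)$, $j\geq 3$, act on this annihilated vector, and exploiting the explicit $\rho|_{\mathfrak{b}^1}$-invariant splitting that the decomposability hypothesis furnishes, I would then write down a direct sum decomposition of $\C^3$ invariant under every $\rho(h_i)$ and every $\rho(y_i)$, contradicting the indecomposability of $\rho$. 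This proves (i), and Lemma \ref{criterion for indecomposability} then gives that $\rho|_{\mathfrak{b}^2}$ is also indecomposable.

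With (i) in place, (ii) and (iii) come out in the familiar way, as in the proof of Lemma \ref{criterion for indecomposability}. There exist scalars $\a_1,\a_2$ for which $T=\a_1\rho(h_1)+\a_2\rho(h_2)$ has three distinct eigenvalues, so $T$ is diagonal with distinct entries in the joint eigenbasis. For $j\geq 3$, the identities $[\rho(h_i),\rho(y_j)]=0$ ($i=1,2$) give $[T,\rho(y_j)]=0$, forcing $\rho(y_j)$ to be diagonal; the same argument makes $\rho(h_j)$ diagonal. Two diagonal matrices commute, so $[\rho(h_j),\rho(y_j)]=0$, and comparison with $[\rho(h_j),\rho(y_j)]=-\rho(y_j)$ yields $\rho(y_j)=0$, which is (ii). With $\rho(y_j)=0$ for all $j\geq 3$, each $\rho(h_j)$ commutes with every generator of $\rho(\mathfrak{b}^n)$, so each of its eigenspaces is a $\rho$-invariant subspace of $\C^3$; indecomposability forces a single eigenvalue, so $\rho(h_j)=\l_j I_3$, establishing (iii). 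The main obstacle is the two-dimensional joint-eigenspace case in the proof of (i): producing the explicit invariant splitting cleanly requires genuine use of the hypothesis that $\rho|_{\mathfrak{b}^1}$ is decomposable, not merely the generic structure of a two-dimensional $(\rho(h_1),\rho(h_2))$-eigenspace.
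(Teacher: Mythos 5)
Your reduction of (ii) and (iii) to (i) via Lemma \ref{criterion for indecomposability} is correct and is essentially what the paper does. The gap is in (i), which you rightly call the heart of the argument but do not actually prove: the passage ``one finds that $\rho(y_1)$ and $\rho(y_2)$ are forced to annihilate at least one of the two coincident joint eigenvectors'' followed by ``I would then write down a direct sum decomposition'' is a plan rather than a proof, and your closing sentence concedes that the key construction is unresolved. Worse, the intermediate claim your plan targets is false with the labelling you fix. You keep factor $1$ as in the hypothesis, choose factor $2$ to be \emph{any} factor with $\rho(y_2)\neq 0$, and then try to show $(\rho(h_1),\rho(h_2))$ has distinct joint eigenvalues. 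Take $n=3$ with $\rho(h_1)=cI_3$, $\rho(y_1)=0$, and the representation of Corollary \ref{thm15}(iii) placed in factors $2$ and $3$, e.g.\ $\rho(h_2)=\mathrm{diag}(\alpha-1,\alpha,\alpha-1)$, $\rho(y_2)=E_{32}$, $\rho(h_3)=\mathrm{diag}(\beta,\beta-1,\beta-1)$, $\rho(y_3)=E_{31}$. This $\rho$ is indecomposable, $\rho|_{\mathfrak{b}^1}$ is decomposable, and $\rho(y_2)\neq0$, yet $(\rho(h_1),\rho(h_2))$ has the repeated joint eigenvalue $(c,\alpha-1)$ (and $(\rho(h_1),\rho(h_3))$ is no better). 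The two coincident joint eigenvectors $e_1,e_3$ are indeed both killed by $\rho(y_1)$ and $\rho(y_2)$, but $\rho(y_3)$ sends $e_1$ to $e_3$, so no invariant splitting exists and your contradiction cannot be extracted. What is actually true is that \emph{some} pair of factors is jointly multiplicity-free, and locating it requires relabelling the first slot as well: the paper first arranges $\rho(y_1)\neq 0$ and then shows that if no $i$ makes $(\rho(h_1),\rho(h_i))$ jointly multiplicity-free, then every $\rho(h_i)$, $i\geq 2$, is scalar and every $\rho(y_i)=0$, so that $\rho$ decomposes together with $\rho|_{\mathfrak{b}^1}$, a contradiction.

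A smaller point: your argument that ``only $\rho(y_1)\neq 0$'' is impossible rests on the assertion that an operator commuting with $\rho|_{\mathfrak{b}^1}$ ``respects any splitting'' of it, which is false in general (the commutant of a decomposable representation need not preserve a given decomposition). The conclusion of that step is nevertheless correct; the clean route is to decompose $\mathbb{C}^3$ into joint eigenspaces of $\rho(h_2),\ldots,\rho(h_n)$ (each is invariant under all of $\rho$ once $\rho(y_j)=0$ for $j\geq 2$), deduce that these $\rho(h_j)$ are scalar, and only then invoke the decomposability of $\rho|_{\mathfrak{b}^1}$.
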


\begin{proof}
We begin with the observation that indecomposability of $\rho$ implies $\rho(y_i)\neq 0$ for some $1\leq i\leq n$. Without loss of generality assume that $\rho(y_1)\neq 0$. So $\rho(h_1)$ can not be scalar times the identity matrix. We prove that there exists $1\leq i\leq n$ such that $(\rho(h_1),\rho(h_i))$ has distinct joint eigenvalues.

%\smallskip

If possible suppose that for any $1\leq i\leq n$, $(\rho(h_1),\rho(h_i))$ does not have distinct joint eigenvalues. Let $(\a_1,\b_1), (\a_2,\b_2)$ and $(\a_3,\b_3)$ be joint eigenvalues of $(\rho(h_1),\rho(h_i))$ such that $(\a_1,\b_1)=(\a_2,\b_2)$.

Since $\rho(y_1)\neq 0$ it follows that $\rho(y_1)$ maps a joint eigenvector corresponding to the joint eigenvalue $(\a_1,\b_1)$ to a joint eigenvector corresponding to the joint eigenvalue $(\a_3,\b_3)$ or vice verse. But the identities $[\rho(h_1),\rho(y_1)]=-\rho(y_1)$ and $[\rho(h_i),\rho(y_1)]=0$ together imply that either $\rho(y_1)$ maps a joint eigenvector corresponding to the joint eigenvalue $(\a_1,\b_1)$ to a joint eigenvector associated to the joint eigenvalue $(\a_1-1,\b_1)$ or a joint eigenvector corresponding to the joint eigenvalue $(\a_3,\b_3)$ to a joint eigenvector associated to the joint eigenvalue $(\a_3-1,\b_3)$. In either of these two cases, we have that $\b_1=\b_2=\b_3$ verifying that $\rho(h_i)=\b_1I_3$. It turns out from the identity $[\rho(h_i),\rho(y_i)]=-\rho(y_i)$ that $\rho(y_i)=0$. Thus for every $2\leq i\leq n$, $\rho(h_i)=\l_iI_3$ and $\rho(y_i)=0$ for some scalars $\l_2,\hdots,\l_n$ which contradicts the hypothesis that $\rho$ is indecomposable since $\rho|_{\mathfrak{b}^1}$ is already decomposable. Now, statements (ii) and (iii) follow from the proof of the Lemma \ref{criterion for indecomposability}.
\end{proof}
An immediate corollary of this proposition is stated below. The proof follows by combining Theorem \ref{Classification of rep} and Proposition \ref{description of rank 3 representation}.
\begin{cor}\label{thm15}
If $\rho:\mathfrak{b}^n\ra\mathfrak{gl}(3,\C)$ is an idecomposable representation, then $\rho$ is one of the following:
\begin{itemize}
\item[(i)] $\rho|_{\mathfrak{b}^1}$ is indecomposable and $\rho(h_i)=\l_iI_3$, $\rho(y_i)=0$, for $i=2,3,\hdots,n$ where $\l_i$ are scalars.
\item[(ii)] $$\rho(h_1)=
    \begin{pmatrix}
      \l_1 & 0 & 0 \\
      0 & \l_1-1    &   0 \\
      0 & 0 & \l_1 \\
    \end{pmatrix}
    ,
\rho(y_1)=
    \begin{pmatrix}
      0 & 0 & 0 \\
      1 & 0    &   0 \\
      0 & 0 & 0 \\
    \end{pmatrix}
    ,$$ 
$$\rho(h_2)=
    \begin{pmatrix}
      \l_2 & 0 & 0 \\
      0 & \l_2    &   0 \\
      0 & 0 & \l_2-1 \\
    \end{pmatrix}
    ,
\rho(y_2)=
    \begin{pmatrix}
      0 & 0 & 0 \\
      0 & 0    &   0 \\
      1 & 0 & 0 \\
    \end{pmatrix}
    $$
and $\rho(h_i)=\l_iI_3$, $\rho(y_i)=0$, for $i=3,\hdots,n$, where $\l_i$ are all scalars.
\item[(iii)] $$\rho(h_1)=
    \begin{pmatrix}
      \a_1-1 & 0 & 0 \\
      0 & \a_1    &   0 \\
      0 & 0 & \a_1-1 \\
    \end{pmatrix},\,\,
\rho(y_1)=
    \begin{pmatrix}
      0 & 0 & 0 \\
      0 & 0    &   0 \\
      0 & 1 & 0 \\
    \end{pmatrix}
    ,$$ 
$$\rho(h_2)=
    \begin{pmatrix}
      \a_2 & 0 & 0 \\
      0 & \a_2-1    &   0 \\
      0 & 0 & \a_2-1 \\
    \end{pmatrix},\,\,
\rho(y_2)=
    \begin{pmatrix}
      0 & 0 & 0 \\
      0 & 0    &   0 \\
      1 & 0 & 0 \\
    \end{pmatrix},
    $$
and $\rho(h_i)=\a_iI_3$, $\rho(y_i)=0$, for $i=3,\hdots,n$, where $\a_i$ are all scalars.
\end{itemize}
\end{cor}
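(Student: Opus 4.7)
The plan is to split on whether the restriction $\rho|_{\mathfrak b^1}$ is indecomposable or decomposable, and in each case combine commutator computations with the classification of multiplicity-free representations of $\mathfrak b^2$ from Theorem \ref{Classification of rep}.

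Suppose first that $\rho|_{\mathfrak b^1}$ is indecomposable. Since $\rho(h_1)$ is diagonalizable, the standard analysis of indecomposable representations of the solvable Lie algebra $\mathfrak b$ (cf.\ the argument for Lemma \ref{properties of indecomposable rep}(ii) applied in the one-variable setting) shows that $\rho(h_1)$ has three distinct eigenvalues $\l_1,\l_1-1,\l_1-2$ and that $\rho(y_1)$ is a cyclic shift in the corresponding weight basis; any multiplicity in the eigenvalues of $\rho(h_1)$ would split off an invariant summand. For every $i\geq 2$, commutation with $\rho(h_1)$ forces both $\rho(h_i)$ and $\rho(y_i)$ to be diagonal in this basis. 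Commuting $\rho(h_i)$ with the shift $\rho(y_1)$ then pins down $\rho(h_i)=\l_i I_3$ for some scalar $\l_i$, and the identity $[\rho(h_i),\rho(y_i)]=-\rho(y_i)$ forces the already diagonal matrix $\rho(y_i)$ to vanish. This produces case (i).

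Now suppose $\rho|_{\mathfrak b^1}$ is decomposable. By Proposition \ref{description of rank 3 representation}, the restriction $\rho|_{\mathfrak b^2}$ is multiplicity-free and indecomposable, and $\rho(h_i)=\l_i I_3$, $\rho(y_i)=0$ for $i\geq 3$. The task therefore reduces to classifying three-dimensional multiplicity-free indecomposable representations of $\mathfrak b^2$. By Theorem \ref{Classification of rep}, such a representation is encoded by a three-element set $\Theta\subseteq\Z_{\geq 0}^2$ of joint eigenvalues of $D_\rho$ satisfying $P_1$--$P_4$, together with the arrows $\rho(y_1),\rho(y_2)$ prescribed (up to nonzero scalars) by $P_4$. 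A short combinatorial inspection shows that up to translation the only possibilities for $\Theta$ are a horizontal row of three points, a vertical column of three points, and the four $L$-shapes obtained by deleting one corner of a $2\times 2$ block. The row case makes $\rho|_{\mathfrak b^1}$ indecomposable and thus falls under the previous case; the column case is obtained from case (i) after interchanging the two factors of $\mathfrak b^2$. The two $L$-shapes in which a single extremal vertex either emits or receives both $P_4$-arrows yield, after choosing basis vectors in the three one-dimensional joint eigenspaces and rescaling to make the arrows of weight one, precisely the matrix presentations in cases (ii) and (iii) respectively.

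The decisive step, and the main obstacle beyond the enumeration, is to rule out the two remaining ``path''-shaped $L$-configurations. In each of these, $P_4$ forces $\rho(y_1)$ and $\rho(y_2)$ to act non-trivially on two different eigenvectors lying consecutively along the path; a direct computation at the source end of the path shows that $\rho(y_1)\rho(y_2)$ and $\rho(y_2)\rho(y_1)$ disagree on the source eigenvector, contradicting the relation $[y_1,y_2]=0$ in $\mathfrak b^2$. Consequently no representation of $\mathfrak b^2$ realises a path configuration, and the enumeration above yields exactly the three cases in the statement.
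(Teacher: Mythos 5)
Your proof is correct and follows essentially the same route as the paper: the paper's one-line proof combines Proposition \ref{description of rank 3 representation} with Theorem \ref{Classification of rep}, and your case split on the (in)decomposability of $\rho|_{\mathfrak{b}^1}$, the enumeration of the three-point joint-eigenvalue configurations, and the final normalization of the matrices are exactly the content being invoked there. Your exclusion of the two path-shaped configurations via the failure of $[\rho(y_1),\rho(y_2)]=0$ at the source vertex is precisely what the paper packages as Corollary \ref{structure of representation} (itself a consequence of Lemma \ref{properties of indecomposable rep}(i) and property $P_4$), so this is the same argument made explicit.
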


We are now in a position to describe all cocycles on $\widetilde{\text{SU}(1, 1)^n} \times \mathbb{D}^n$. For this, we first recall that any cocycle $\hat{J}$ on $\widetilde{\text{SU}(1,1)}\times \D$ taking values in GL$(3,\C)$ such that $\hat{J}(\tilde{k},0)$ is diagonal for all $\tilde{k}$ in the stabilizer subgroup of the origin, is of the form 
\beq\label{eqn15}
 \hat{J}(\tilde{g},z):=
    \begin{pmatrix}
      (g')^{\frac{\l}{2}} & 0 & 0\\
 -2c_{g}(g')^{\frac{\l+1}{2}}     & (g')^{\frac{\l+2}{2}} & 0 \\
-3c_{g}^2(g')^{\frac{\l+2}{2}} & -3c_g(g')^{\frac{\l+3}{2}} & (g')^{\frac{\l+4}{2}} \\
    \end{pmatrix},
\eeq for some $\l>0$ where $g=p(\tilde{g})$ and $p:\widetilde{\mob}\ra \mob$ is the universal covering map, see \cite{ACHOCD}.

\begin{prop}\label{thm 16}
Suppose $J : \widetilde{\text{SU}(1, 1)}^n \times \mathbb{D}^n \to \text{GL}(3, \mathbb{C})$ is  a cocycle such that $J(\tilde{k}, 0)$ is diagonal for all $\tilde{k} \in \tilde{\K}$. Then $J(\tilde{g}, \bz)$ takes one of the following form.
\begin{itemize}
\item[(i)] $\hat{J}(\tilde{g}_1,z_1)\prod_{j=2}^{n}(g_j'(z_j))^{\l_j}$ where $\hat{J}$ is the cocycle as in \eqref{eqn15}.

\item[(ii)] $\left(\!\!
    \begin{smallmatrix}
      (g'_1)^{\frac{\l_1}{2}}(g'_2)^{\frac{\l_2}{2}} & 0 & 0\\
 -c_{g_1}(g'_1)^{\frac{\l_1+1}{2}}(g'_2)^{\frac{\l_2}{2}}      & (g'_1)^{\frac{\l_1+2}{2}}(g'_2)^{\frac{\l_2}{2}} & 0 \\
-c_{g_2}(g'_1)^{\frac{\l_1}{2}}(g'_2)^{\frac{\l_2+1}{2}} & 0 & (g'_1)^{\frac{\l_1}{2}}(g'_2)^{\frac{\l_2+2}{2}} \\
    \end{smallmatrix}\!\!
    \right)\prod_{j=3}^n (g'_{j})^{\frac{\l_j}{2}}(\bz)$
    where $c_{g_i}$ is determined by the equation \eqref{eqn9}.
\vspace{0.05in}
\item[(iii)] $\left(\!\!
    \begin{smallmatrix}
      (g'_1)^{\frac{\a_1}{2}}(g'_2)^{\frac{\a_2+2}{2}} & 0 & 0\\
 0     & (g'_1)^{\frac{\a_1+2}{2}}(g'_2)^{\frac{\a_2}{2}} & 0 \\
-c_{g_1}(g'_1)^{\frac{\a_1+1}{2}}(g'_2)^{\frac{\a_2+2}{2}} & -c_{g_2}(g'_1)^{\frac{\a_1+2}{2}}(g'_2)^{\frac{\a_2+1}{2}}  & (g'_1)^{\frac{\a_1+2}{2}}(g'_2)^{\frac{\a_2+2}{2}} \\
    \end{smallmatrix}\!\!
    \right)\prod_{j=3}^n (g'_{j})^{\frac{\a_j}{2}}(\bz)$
 where $c_{g_i}$ is determined by the equation \eqref{eqn9}.
\end{itemize} 
\end{prop}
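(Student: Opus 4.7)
The plan is to deduce this proposition directly from the classification of the underlying representations (Corollary \ref{thm15}) combined with the explicit cocycle formula \eqref{cocyclecomp3}. Indeed, the discussion in Section \ref{Section 4} has already reduced the classification of cocycles $J$ on $\widetilde{\text{SU}(1,1)^n}\times\D^n$ satisfying the diagonal condition at the stabilizer to the classification of $3$-dimensional indecomposable representations $\rho:\mathfrak b^n\to\mathfrak{gl}(3,\C)$ such that each $\rho(h_i)$ is diagonalizable. Since by Corollary \ref{thm15} exactly three families of such representations exist, each of them will produce, after substitution in \eqref{cocyclecomp3}, one of the three cocycles in the statement.

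First, I would observe that the hypothesis $J(\tilde k,0)$ diagonal for all $\tilde k\in\tilde{\K}$ is equivalent (through the construction following Theorem \ref{construction of cocycle}) to each $\rho(h_i)$ being diagonalizable in a common basis. Next I would dispose of case (i): when $\rho|_{\mathfrak b^1}$ is indecomposable, by Corollary \ref{thm15}(i), $\rho$ splits as the tensor product of an indecomposable $3$-dimensional representation of $\mathfrak b$ (in the first slot) with scalar one-dimensional representations $h_j\mapsto\lambda_j$, $y_j\mapsto 0$ for $j\geq 2$. The one-variable cocycle associated to the former is precisely $\hat J$ in \eqref{eqn15} (this is recorded in the excerpt with a citation to \cite{ACHOCD}), and each scalar representation contributes the factor $(g_j'(z_j))^{\lambda_j}$ via \eqref{cocyclecomp3}. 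Putting these pieces together inside the product formula \eqref{cocyclecomp3} yields the desired expression in (i).

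For cases (ii) and (iii), the key simplification is that the prescribed matrices $\rho(y_1)$ and $\rho(y_2)$ in Corollary \ref{thm15} satisfy $\rho(y_i)^2=0$, so the exponentials in \eqref{cocyclecomp3} truncate: $\exp\bigl(t\rho(y_i)\bigr)=I+t\rho(y_i)$. Moreover, $\rho(h_i)$ is diagonal in each case, hence $\exp\bigl(2\log(c_iz_i+d_i)\rho(h_i)\bigr)$ is the diagonal matrix with entries $(c_iz_i+d_i)^{2\nu}$ where $\nu$ ranges over the diagonal entries of $\rho(h_i)$. Substituting $t=-c_i/(c_iz_i+d_i)$ and then invoking the relation $g_i'(z_i)=(c_iz_i+d_i)^{-2}$ together with the identity \eqref{eqn9} (which rewrites $c_i$ in terms of $c_{g_i}$) converts every block into the required powers of $g_i'$. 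Finally the tail product $\prod_{j\geq 3}\exp\bigl(2\log(c_jz_j+d_j)\rho(h_j)\bigr)=\prod_{j\geq 3}(g_j'(z_j))^{\lambda_j/2}$ or $(g_j'(z_j))^{\alpha_j/2}$ appears as the scalar factor; the factors corresponding to $y_j$, $j\geq 3$, are trivial since $\rho(y_j)=0$ there.

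The main obstacle is the bookkeeping needed to rearrange the ordered product of exponentials in \eqref{cocyclecomp3} into the compact lower-triangular form displayed in the statement. This is not conceptually difficult but requires care: in case (ii) the matrices $\rho(y_1)$ and $\rho(y_2)$ have disjoint supports and commute with $\rho(h_j)$ for $j\neq i$, so the factors for the two variables can be separated; in case (iii) the same is true but the nontrivial entries of $\rho(y_1)$ and $\rho(y_2)$ lie in the last row, and one must verify that reordering produces no new terms (which follows from $\rho(y_1)\rho(y_2)=\rho(y_2)\rho(y_1)=0$ in that case). Once these commutation relations are used, the claimed matrix form drops out by direct multiplication, completing the proof.
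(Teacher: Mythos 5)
Your proposal follows essentially the same route as the paper's proof: identify the cocycle with an indecomposable representation $\rho$ of $\mathfrak b^n$ via the construction of Section \ref{Section 4}, invoke Corollary \ref{thm15} to reduce to the three families, and substitute into \eqref{cocyclecomp3}. The extra detail you supply (truncation of the exponentials since $\rho(y_i)^2=0$, the commutation $\rho(y_1)\rho(y_2)=\rho(y_2)\rho(y_1)=0$, and the identities $g_i'(z_i)=(c_iz_i+d_i)^{-2}$ and \eqref{eqn9}) is exactly the computation the paper leaves implicit, and it checks out.
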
 

\begin{proof}
We begin by pointing out that, for any cocycle $J : \widetilde{\text{SU}(1, 1)}^n \times \mathbb{D}^n \to \text{GL}(3, \mathbb{C})$, there is an indecomposable representation $\rho:\mathfrak{b}^n\ra\mathfrak{gl}(3,\C)$ such that $$J(\tilde{g},\bz)=\rho(s(\bz)^{-1}\tilde{g}^{-1}s(\tilde{g}\cdot\bz))$$ where $s:\D^n\ra (\mathbb{S}^2)^n$ is a holomorphic section. Since $\rho(k,0)$ is diagonal for every $k\in\K$, it follows that $\rho$ is diagonalizable on the subalgebra spanned by $\{h_j:1\leq j\leq n\}$. Thus, $\rho$ is one of the representations obtained in Theorem \ref{thm14}.

Let $\tilde{g}\in\widetilde{\text{SU}(1,1)^n}$ with $p(\tilde{g})= g =\left(\! \!\left(\!\begin{smallmatrix}
a_i & b_i\\
c_i & d_i
\end{smallmatrix} \!\right)\!\!\right)_{i=1}^{n} \in \text{SU}(1,1)^n$. If $\rho$ is of the form $(i)$ (respectively, $(ii)$ or $(iii)$) in Corollary \ref{thm15} substituting the values of $\rho(h_i)$ and $\rho(y_i)$ in \eqref{cocyclecomp3} we obtain that $J$ takes the form $(i)$ (respectively, $(ii)$ or $(iii)$).
\end{proof}

In the theorem below, we describe all  $\mob^n$ - homogeneous $n$ - tuples of operators in $\mathrm B_3(\D^n)$ with the help of the classification of all cocycles of $\widetilde{\text{SU}(1,1)^n}\times \D^n$ obtained above. 

\begin{thm}\label{thm17}
Let $\M^*=(M_{z_1}^*,\hdots,M_{z_n}^*)\in\mathrm B_3(\D^n)$ and suppose that $\M$ is irreducible and homogeneous \w the group $\mob^n$. Then $\M$ is unitarily equivalent to either the $n$ - tuple of multiplication operators  on the \rkhs $\H_{\hat{K}}$ or the $n$ - tuple $\M^{(\bl,\bmu)}$ acting on $\bA^{(\bl,\bmu)}(\D^n)$ or the $n$ - tuple $\M^{(\ba,\bb)}$ acting on $\bH^{(\ba,\bb)}(\D^n)$.
\end{thm}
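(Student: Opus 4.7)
The plan is to follow the three-step programme laid out in Section \ref{Section 4} (items (i)--(iii) after Theorem \ref{grpactvec}) and specialised to rank three. Since $\boldsymbol{M}^{*}\in\mathrm{B}_{3}(\D^{n})$ is homogeneous with respect to $\mob^{n}$, Theorem \ref{grpactvec} provides an isometric action of $\widetilde{\mob}^{n}$ on the associated hermitian holomorphic vector bundle of rank three, and \eqref{qinv} provides a cocycle $J:\widetilde{\mob}^{n}\times\D^{n}\to\text{GL}(3,\C)$. By choosing an orthonormal basis of the fibre at $0$ consisting of joint eigenvectors of the (simultaneously diagonalisable, commuting, unitary) operators $\{J(\tilde k,0):\tilde k\in\tilde{\K}\}$, we may pass to an equivalent cocycle for which $J(\tilde k,0)$ is diagonal. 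Proposition \ref{thm 16} then forces $J$ to be one of the three explicit types (i), (ii), (iii) listed there.

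The next step is to determine, for each of these three cocycles, all possible values of $K(0,0)$ such that \eqref{cocyclevsker} produces a positive definite kernel on $\D^{n}$. The constraint \eqref{K(0,0)} together with the explicit form of $J(\tilde k,0)$ shows that $K(0,0)$ must commute with this diagonal family, which (up to obvious multiplicities) reduces $K(0,0)$ to a diagonal form. For cocycle of type (i), the restriction of $K$ to $\D\times\{0\}^{n-1}\times\D\times\{0\}^{n-1}$ must be a positive definite kernel on $\D$ quasi-invariant under $\widetilde{\mob}$ with respect to $\hat J$; by the classification of homogeneous operators in $\mathrm{B}_{3}(\D)$ (as used in \cite{HOHS,ACHOCD}), this pins down $K(0,0)$ up to two positive parameters $\mu_{1},\mu_{2}$ and yields exactly the kernel $\hat K$ of \eqref{eqn14}, together with positivity of the remaining $\lambda_{j}$. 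For cocycles of types (ii) and (iii), the analogous argument restricted to the two--disc $\D^{2}\times\{0\}^{n-2}$ reduces the question to producing a positive definite quasi-invariant kernel on $\D^{2}$ for the given $2$-variable multiplier; imitating the rank two argument that preceded Theorem \ref{homogeneous rank 2 bundle}, one concludes that $K(0,0)$ is diagonal with entries parametrised by two positive numbers $\mu_{1},\mu_{2}$ (respectively $\beta_{1},\beta_{2}$) and all $\lambda_{j}$ (respectively $\alpha_{j}$) positive. Polarising \eqref{cocyclevsker} in each case recovers precisely the kernels $K^{(\boldsymbol{\lambda},\boldsymbol{\mu})}$ of Proposition \ref{prop2} and $L^{(\boldsymbol{\alpha},\boldsymbol{\beta})}$ of Theorem \ref{thm14x}(i).

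To complete the classification, we appeal to Theorem \ref{thm5}, Theorem \ref{thm11}, Theorem \ref{thm12} (and their analogues in Theorem \ref{thm14x}(iii)) to conclude that the $n$-tuples of multiplication operators on $\bA^{(\boldsymbol{\lambda},\boldsymbol{\mu})}(\D^{n})$ and $\bH^{(\boldsymbol{\alpha},\boldsymbol{\beta})}(\D^{n})$ are bounded, irreducible, and that their adjoints belong to $\mathrm{B}_{3}(\D^{n})$; the corresponding facts for $\H_{\hat K}$ follow by tensoring the known homogeneous examples in $\mathrm{B}_{3}(\D)$ with those in $\mathrm{B}_{1}(\D^{n-1})$. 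Since $\boldsymbol{M}^{*}$ is determined up to unitary equivalence by its associated reproducing kernel modulo the equivalence relation from the normalised kernel (equivalently, by the isomorphism class of the hermitian holomorphic bundle), $\boldsymbol{M}$ must be unitarily equivalent to one of the three listed families.

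The main obstacle is the positivity analysis in the second paragraph: for cocycles of types (ii) and (iii) one must show that the only diagonal $K(0,0)$ producing a positive definite polarisation of \eqref{cocyclevsker} is the one read off from the kernels $K^{(\boldsymbol{\lambda},\boldsymbol{\mu})}$ and $L^{(\boldsymbol{\alpha},\boldsymbol{\beta})}$, and that these free parameters must be strictly positive. This is done by restricting to one- and two-disc slices and invoking the rank two classification together with the two one-variable results on homogeneous kernels recalled in the proof of Theorem \ref{homogeneous rank 2 bundle}; the curvature computations at the origin (as in Lemma \ref{curvature at 0 is diagonal} and Theorems \ref{thm13}, \ref{thm14}) then guarantee that the three families are mutually inequivalent and that the parameters are uniquely determined.
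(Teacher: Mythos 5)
Your overall architecture coincides with the paper's: obtain the cocycle from \cite[Theorem 3.1]{OICHO}, arrange $J(\tilde k,0)$ diagonal, invoke Proposition \ref{thm 16} to reduce to the three explicit cocycles, and then determine the admissible $K(0,0)$ in each case. The gap sits exactly where you flag ``the main obstacle.'' For cocycles of types (ii) and (iii) one gets $K(0,0)=\mathrm{diag}(1,d_1,d_2)$, and the decisive point is the \emph{strict} inequality $d_i>1/\lambda_i$ (resp.\ $d_2>1/\alpha_1$): only then is $d_i=1/\lambda_i+\mu_i$ with $\mu_i>0$ and $K=K^{(\bl,\bmu)}$ (resp.\ $L^{(\ba,\bb)}$). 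Positivity of the polarisation of \eqref{cocyclevsker} can never yield strictness on its own, because the boundary value $d_1=1/\lambda_1$ gives, in the relevant $2\times 2$ block, the jet-type kernel $\bigl(\!\bigl(\lambda_i^{-1}\lambda_j^{-1}\partial^i\bar\partial^j(1-z\bar w)^{-\lambda_1}\bigr)\!\bigr)$, which is still positive semi-definite and quasi-invariant. So ``imitating the rank two argument'' and ``invoking the rank two classification'' does not close the proof unless the one-variable input is applied with boundedness of the multiplication operator as an explicit hypothesis, and you do not show how boundedness on the ambient rank-three space passes to the compressed, sliced one-variable kernel.

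The paper does not make your reduction; it argues directly. Positive semi-definiteness of the $6\times 6$ two-point Gram matrix $P(z)$ at $\bz_1=0$ and $\bz_2=(z,0,\dots,0)$ is exploited through an explicit $3\times 3$ principal submatrix $Q(z)$ whose determinant $d_1(1-|z|^2)^{-\lambda_1}-|z|^2-d_1\ge 0$ gives $d_1\ge 1/\lambda_1$; then, assuming $d_1=1/\lambda_1$, the same computation applied to $(C-z_1\bar w_1)K$ produces $f_C$ with $f_C(0)=0$ and $\lim_{h\to 0^+}f_C(h)/h=-C^2/\lambda_1<0$, contradicting positivity for every $C>0$, so $M_{z_1}$ would be unbounded; hence $d_1>1/\lambda_1$, and similarly for the other parameters and for type (iii) via the submatrix $\tilde Q(z)$. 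If you wish to keep the compression route, you must additionally verify (a) that the $\{e_1,e_2\}$ (resp.\ $\{e_1,e_3\}$) compression of $K$ restricted to $z_2=\dots=z_n=0$ is quasi-invariant for the rank-two cocycle (true, because $J$ is block lower triangular), (b) that $(c-z\bar w)\tilde K\ge 0$ is inherited from $(c-z_1\bar w_1)K\ge 0$, and (c) that the cited one-variable classification excludes $\mu=0$ precisely on the strength of that boundedness. As written, the proof is incomplete at its central analytic step.
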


\begin{proof}
We recall from \cite[Theorem 3.1]{OICHO} that since $\M^*$ is an irreducible homogeneous operator in $\mathrm B_3(\D^n)$ there exists a cocycle $J:\widetilde{\text{SU}(1,1)^n}\times \D^n\ra \text{GL}(3,\C)$ satisfying 
\beq\label{eqn18}
K(\bz,\bw) &=& J(\tilde{g},\bz)K(\tilde{g}\bz,\tilde{g}\bw)J(\tilde{g},\bw)^*,
\eeq 
for $\tilde{g}\in\widetilde{\text{SU}(1,1)^n}$ with $p(\tilde{g})=g\in$ $\mob^n$ and $\bz,\bw\in\D^n$. Note that $J$ is one of form (i), (ii) or (iii) of the Proposition \ref{thm 16}. If $J$ is of the form Proposition \ref{thm 16}(i), then $K$ is equivalent to the kernel $\hat{K}$ where $\hat{K}$ is defined as in the Equation \eqref{eqn14}.

Now suppose that $J$ takes the form described in Proposition \ref{thm 16}(ii).
%that is, 
%\[ J(\tilde{g},\bz):=
%    \begin{pmatrix}
%      (g'_1)^{\frac{\l_1}{2}}(g'_2)^{\frac{\l_2}{2}} & 0 & 0\\
% -c_{g_1}(g'_1)^{\frac{\l_1+1}{2}}(g'_2)^{\frac{\l_2}{2}}      & (g'_1)^{\frac{\l_1+2}{2}}(g'_2)^{\frac{\l_2}{2}} & 0 \\
%-c_{g_2}(g'_1)^{\frac{\l_1}{2}}(g'_2)^{\frac{\l_2+1}{2}} & 0 & (g'_1)^{\frac{\l_1}{2}}(g'_2)^{\frac{\l_2+2}{2}} \\
%    \end{pmatrix}\times \prod_{j=3}^n (g'_{j})^{\frac{\l_j}{2}}(\bz).
%\]
For $\tilde{k}\in\tilde{\K}$, it follows from the Equation \eqref{eqn18} that $J(\tilde{k},0) $ commutes with $K(0,0)$. Consequently, $K(0,0)$ is a diagonal matrix. Let $K(0,0)=\text{diag}(1, d_1,d_2)$. Now since $K$ is a positive semi-definite and $M_{z_i}$ are bounded for all $1 \leq i \leq n$,
%$K$ is a positive definite the functions $(z_i,w_i)\mapsto (1-z_i\ov{w}_i)^{-\l_i}$, $3\leq i\leq n$, are positive definite. 
we have, for $3\leq i\leq n$, that $\l_i>0$.

Clearly, for two points $\bz_1=(0,\hdots,0)$ and $\bz_2=(z,0\hdots,0)$, $z\in\D$, the matrix $P(z)=(\!(K(\bz_1,\bz_2))\!)_{i,j=1}^2$ is positive semi-definite. So it follows that the submatrix $Q(z)$ of $P(z)$ defined as 
\[ Q(z):=
    \left(
    \begin{array}{ccc}
      \<P(z)\varepsilon_1,\varepsilon_1\> & \<P(z)\varepsilon_1,\varepsilon_2\> & \<P(z)\varepsilon_1,\varepsilon_4\>\\
 \<P(z)\varepsilon_2,\varepsilon_1\>     & \<P(z)\varepsilon_2,\varepsilon_2\> & \<P(z)\varepsilon_2,\varepsilon_4\>  \\
\<P(z)\varepsilon_4,\varepsilon_1\>  & \<P(z)\varepsilon_4,\varepsilon_2\> & \<P(z)\varepsilon_4,\varepsilon_4\> \\
    \end{array}
    \right)\]
is positive semidefinite where $\{\varepsilon_i:1\leq i\leq 6\}$ is the standard ordered basis for $\C^6$. Consequently, $\text{det}(Q(z))\geq 0$ for all $z\in\D$. A simple calculation yields that $$\text{det}(Q(z))=d_1(1-|z|^2)^{-\l_1}-|z|^2-d_1.$$ Let us consider the smooth function $f(r)=d_1(1-r)^{-\l_1}-r-d_1$. Note that $f(r)\geq 0$ for $r\in(0,1)$ and $f(0) = 0$. This implies that $f'(0)\geq 0$ from which it follows that $d_1\geq \frac{1}{\l_1}$. A similar calculation yields that $d_2\geq \frac{1}{\l_2}$.

Now it remains to show that $d_i\neq \frac{1}{\l_i}$ for $i=1,2$. We prove that the multiplication operator $M_{z_1}$ can not be bounded whenever $d_1=\frac{1}{\l_1}$. On the contrary, suppose that there is a  positive constant $C$ such that $$K_C(\bz,\bw)=(C-z_1\ov{w}_1)K(\bz,\bw),~~~\bz,\bw\in\D^n$$ is a non-negative definite kernel. As before let $\bz_1=(0,\hdots,0)$ and $\bz_2=(z,0\hdots,0)$ with $z\in\D$ be two points in $\D^n$. Since $K_C$ is a non-negative definite kernel the matrix $P_C(z)=(\!(K_C(\bz_i,\bz_j))\!)_{i,j=1}^2$ is also a non-negative definite matrix. Therefore, the submatrix
\[ Q_C(z):=
    \left(
    \begin{array}{ccc}
      \<P_C(z)\varepsilon_1,\varepsilon_1\> & \<P_C(z)\varepsilon_1,\varepsilon_2\> & \<P_C(z)\varepsilon_1,\varepsilon_4\>\\
 \<P_C(z)\varepsilon_2,\varepsilon_1\>     & \<P_C(z)\varepsilon_2,\varepsilon_2\> & \<P_C(z)\varepsilon_2,\varepsilon_4\>  \\
\<P_C(z)\varepsilon_4,\varepsilon_1\>  & \<P_C(z)\varepsilon_4,\varepsilon_2\> & \<P_C(z)\varepsilon_4,\varepsilon_4\> \\
    \end{array}
    \right)\] 
is non-negative. In particular, for $z\in\D$, 
$$\text{det}(Q_C(z))=\dfrac{C^2}{\l_1}(C-|z|^2)(1-|z|^2)^{-\l_1}-C^3|z|^2-\dfrac{C^3}{\l_1}\geq 0.$$ 
Thus substituting $|z|^2=r$ we have that the function 
$$f_C(r)=\dfrac{C^2}{\l_1}(C-r)(1-r)^{-\l_1}-C^3r-\dfrac{C^3}{\l_1}\geq 0$$ 
for all $r\in [0,1)$. Also, observe that $f_C(0)=0$ and $$\lim_{h\rightarrow 0^+}\dfrac{f_C(h)}{h}=-\dfrac{C^2}{\l_1}<0.$$ As a consequence, there is a $r\in(0,1)$ such  that $f_C(r)<0$. This contradicts the fact that $f_C(r)\geq 0$ on $[0,1)$. Therefore, $K_C$ cannot be positive definite kernel for any choice of positive $C$ which shows that $M_{z_1}$ is not bounded. Thus we must have $d_1>\frac{1}{\l_1}$. In a similar way, it can be shown that $d_2>\frac{1}{\l_2}$.

For $i=1,2$, we have shown that $d_i>\frac{1}{\l_i}$. In consequence, there exist real numbers $\mu_i>0$, $i=1,2$, such that $d_i=\mu_i+\frac{1}{\l_i}$. This proves that $K=K^{(\bl,\bmu)}$ and therefore, $\H_K=\bA^{(\bl,\bmu)}(\D^n)$.

Finally, suppose $J$ takes the form described in Proposition \ref{thm 16}(iii). Let $K(0,0)=\text{diag}(1, d_1,d_2)$. A similar argument given above yields that $\a_i > 0$, for $3 \leq i \leq n$. Now, considering the submatrix
\[ \tilde{Q}(z):=
    \left(
    \begin{array}{ccc}
      \<P(z)\varepsilon_1,\varepsilon_1\> & \<P(z)\varepsilon_1,\varepsilon_3\> & \<P(z)\varepsilon_1,\varepsilon_4\>\\
 \<P(z)\varepsilon_3,\varepsilon_1\>     & \<P(z)\varepsilon_3,\varepsilon_3\> & \<P(z)\varepsilon_3,\varepsilon_4\>  \\
\<P(z)\varepsilon_4,\varepsilon_1\>  & \<P(z)\varepsilon_4,\varepsilon_3\> & \<P(z)\varepsilon_4,\varepsilon_4\> \\
    \end{array}
    \right)\]
and following a similar technique as above implies that $d_2 \geq \frac{1}{\a_1}.$ If $d_2 = \frac{1}{\a_1}$, again following a similar argument as before, we prove that $M_{z_1}$ is not bounded. Thus, there exists $\b_1, \b_2 > 0$ such that $K = L^{(\ba,\bb)}$ and therefore $\mathcal{H}_K = \bH^{(\ba,\bb)}$.    
\end{proof}

\begin{rem}\label{xxx}
Note that the cocyles given in Proposition \ref{thm 16}(ii) and (iii) are inequivalent for any $\bl, \bmu, \ba, \bb$ and therefore, the operators $\M^{(\bl,\bmu)}$ and $\M^{(\ba, \bb)}$ are always inequivalent.
\end{rem}

\subsection{Homogeneity \w Aut$(\D^n)$}

We now consider homogeneity of $n$ - tuples of operators in $\mathrm B_3(\D^n)$ under the action of $\aut(\D^n)$. It is shown that for $n=2$, there are irreducible $\aut(\D^2)$ - homogeneous operator tuples in $\mathrm B_3(\D^2)$ unlike the case of rank $2$ where there is no $\aut(\D^n)$ - homogeneous $n$ - tuples of operators in $\mathrm B_2(\D^n)$ for any $n\geq 2$. However, there are no homogeneous operators in $\mathrm B_3(\D^n)$ \w the group $\aut(\D^n)$ if $n\geq 3$. 

\begin{prop}\label{thm21}
For $n\geq 3$, $\M^{(\bl,\bmu)}$ and $\M^{(\ba, \bb)}$ are not homogeneous under the action of the group $\aut(\D^n)$.
\end{prop}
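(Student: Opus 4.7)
The strategy is to apply Lemma \ref{curvature at 0 is diagonal}(ii): if such an $n$-tuple were $\aut(\D^n)$-homogeneous, then the diagonal blocks $\mathcal{K}^{ii}(0)$ of its curvature at the origin must be pairwise similar for $i=1,\dots,n$. The plan is to contradict this by exhibiting, in each case, an index $j\in\{3,\dots,n\}$ for which $\mathcal{K}^{jj}(0)$ is scalar while $\mathcal{K}^{11}(0)$ is not.

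The first step exploits the factored shape of the two kernels. Both are of the form
$$K(\bz,\bw) = \tilde K(z_1,z_2,w_1,w_2)\prod_{j\geq 3}(1-z_j\bar w_j)^{-\lambda_j}$$
for some $3\times 3$ matrix-valued function $\tilde K$ (Proposition \ref{prop2} and Theorem \ref{thm14x}(i)). For $j\geq 3$ the factor $\tilde K$ is annihilated by both $\partial_j$ and $\bar\partial_j$, and a one-line computation yields $K(\bz,\bz)^{-1}\bar\partial_j K(\bz,\bz) = \lambda_j z_j(1-|z_j|^2)^{-1}I_3$ (respectively $\a_j z_j(1-|z_j|^2)^{-1}I_3$ in the Type II case), whence $\mathcal{K}^{jj}(0) = \lambda_j I_3$ (resp. $\a_j I_3$) is scalar. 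Since the only matrix similar to a scalar matrix is itself, $\mathcal{K}^{11}(0)$ would then also have to be scalar.

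The main step is to rule this out by a direct computation using
$$\mathcal{K}^{11}(0) = K(0,0)^{-1}(\partial_1\bar\partial_1 K)(0,0) - K(0,0)^{-1}(\partial_1 K)(0,0)\,K(0,0)^{-1}(\bar\partial_1 K)(0,0).$$
For the Type I kernel one has $K^{(\bl,\bmu)}(0,0) = \mathrm{diag}(1,a_1,a_2)$ with $a_i=\lambda_i^{-1}+\mu_i^2$, and the Taylor expansion at the origin shows that $(\partial_1 K^{(\bl,\bmu)})(0,0)$ and $(\bar\partial_1 K^{(\bl,\bmu)})(0,0)$ each have a single non-zero entry, arising from the lone $z_1$ in the $(1,2)$-slot and the lone $\bar w_1$ in the $(2,1)$-slot of the kernel. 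Consequently the quadratic correction perturbs only the $(1,1)$-entry of $\mathcal{K}^{11}(0)$, and a short bookkeeping of the diagonal of $(\partial_1\bar\partial_1 K^{(\bl,\bmu)})(0,0)$ yields
$$\mathcal{K}^{11}(0) = \mathrm{diag}\!\left(\lambda_1 - a_1^{-1},\; \lambda_1 + 2 + a_1^{-1},\; \lambda_1\right),$$
whose three entries are pairwise distinct since $a_1>0$. This contradicts the required scalar form.

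The Type II case is disposed of by the same argument: the only surviving first-order contributions at the origin occur in the $(1,3)$- and $(3,1)$-slots of $L^{(\ba,\bb)}$, one has $L^{(\ba,\bb)}(0,0) = \mathrm{diag}(1,\b_1^2,b)$ with $b=\a_1^{-1}+\b_1^2\a_2^{-1}+\b_2^2$, and an identical bookkeeping gives $\mathcal{K}^{11}(0) = \mathrm{diag}(\a_1-b^{-1},\;\a_1+2,\;\a_1+2+b^{-1})$, again with three pairwise distinct entries. The sole obstacle in carrying out the plan is the low-order Taylor expansion of the $3\times 3$ kernels; however, the scalar-factor structure in the indices $j\geq 3$ and the vanishing of almost all first-order partial derivatives at the origin reduce the calculation to a short finite matrix manipulation.
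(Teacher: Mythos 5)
Your proposal is correct and follows essentially the same route as the paper: both invoke Lemma \ref{curvature at 0 is diagonal} to force the diagonal curvature blocks $\mathcal{K}^{ii}(0)$ to be mutually similar, and both derive a contradiction from the fact that $\mathcal{K}^{jj}(0)=\lambda_j I_3$ (resp. $\alpha_j I_3$) is scalar for $j\geq 3$ while $\mathcal{K}^{11}(0)$ has distinct eigenvalues. Your explicit diagonal entries agree with the paper's computations in Theorems \ref{thm13} and \ref{thm14} (the paper's proof of the present proposition states the eigenvalues in a slightly different, internally inconsistent normalization of $\mu_1$, but this does not affect the argument), and you carry out the Type II case that the paper dismisses as ``similar.''
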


\begin{proof}
We prove that if $n \geq 3$, then $\M^{(\bl, \bmu)}$ is not homogeneous with respect to $\aut(\D^n)$. The proof for $\M^{(\ba, \bb)}$ is similar. 

Since $\l_1$, $\frac{\l_1^2\mu_1}{\l_1\mu_1 - 1}$ and $2+\frac{\l_1(2-\l_1\mu_1)}{1-\l_1\mu_1}$ are eigenvalues of $\mathcal{K}^{11}(0)$ and on the other hand, $\l_i$ is the only eigenvalue of $\mathcal{K}^{ii}(0)$, for $i\geq 3$, the proof follows from Lemma \ref{curvature at 0 is diagonal}.
\end{proof}
The proof of the following theorem is similar to that of Proposition \ref{thm21}, therefore omitted. 
\begin{prop}\label{thm22}
For $n\geq 2$, the $n$ - tuple of multiplication operators $\M=(M_{z_1},\hdots,M_{z_n})$ defined on the \rkhs $\H_{\hat{K}}$ is not homogeneous under the action of the group $\aut(\D^n)$, where $\hat{K}$ is defined in Equation  \eqref{eqn14}.
\end{prop}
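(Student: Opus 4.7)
\textbf{Proof proposal for Proposition \ref{thm22}.} The plan is to mimic the argument used for Proposition \ref{thm21}, exploiting the product structure of $\hat{K}$ together with Lemma \ref{curvature at 0 is diagonal}.

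Suppose, for contradiction, that $\M = (M_{z_1}, \ldots, M_{z_n})$ on $\mathcal{H}_{\hat{K}}$ is homogeneous with respect to $\aut(\D^n)$. Then by \cite[Theorem 3.1]{OICHO}, $\hat{K}$ is quasi-invariant under some holomorphic cocycle $J : \widetilde{\aut(\D^n)} \times \D^n \to \mathrm{GL}(3,\C)$. By Lemma \ref{curvature at 0 is diagonal}, the diagonal blocks $\mathcal{K}^{ii}(0)$ of the curvature must be mutually similar as $3 \times 3$ matrices for $1 \leq i \leq n$. My goal is to exhibit two indices $i, j$ for which $\mathcal{K}^{ii}(0)$ and $\mathcal{K}^{jj}(0)$ fail to be similar.

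The key computation is to decouple the curvature along the product structure. Write $\hat{K}(\bz, \bw) = K_1(z_1, w_1)\, f(\bz, \bw)$ with $K_1 := \mathbf{B}^{(\l_1, \bmu)}$ taking values in $3 \times 3$ matrices, and scalar factor $f(\bz, \bw) := \prod_{i=2}^n (1 - z_i \bar{w}_i)^{-\l_i}$. Since $f$ is scalar and independent of $z_1, w_1$, a direct computation gives
\begin{align*}
\hat{K}(\bz,\bz)^{-1} \overline{\partial_1} \hat{K}(\bz,\bz) &= K_1(z_1,z_1)^{-1} \overline{\partial_1} K_1(z_1,z_1), \\
\hat{K}(\bz,\bz)^{-1} \overline{\partial_j} \hat{K}(\bz,\bz) &= f(\bz,\bz)^{-1} \overline{\partial_j} f(\bz,\bz) \cdot I_3, \quad j \geq 2.
\end{align*}
Applying $\partial_i$ and evaluating at $\bz = 0$, I obtain that $\mathcal{K}^{11}(0)$ is precisely the curvature at the origin of the rank $3$ bundle on $\D$ associated with $\mathbf{B}^{(\l_1, \bmu)}$, while $\mathcal{K}^{jj}(0) = \l_j I_3$ for each $j \geq 2$.

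Since $\mathbf{B}^{(\l_1,\bmu)}$ gives rise to an irreducible homogeneous operator in $\mathrm B_3(\D)$, its curvature at the origin has three distinct eigenvalues (this was recorded and used already in the proof of Theorem \ref{thm13}, where the same matrix played the role of $\mathcal{K}^{22}(0)$ for $\hat{K}$). Thus $\mathcal{K}^{11}(0)$ has three distinct eigenvalues, whereas $\mathcal{K}^{22}(0) = \l_2 I_3$ is scalar; the two cannot be similar, contradicting Lemma \ref{curvature at 0 is diagonal}.

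The bulk of the argument is routine once the product decomposition is written down; the only part requiring care is checking that the curvature genuinely decouples so that $\mathcal{K}^{jj}(0)$ for $j \geq 2$ picks up no contribution from the matrix-valued factor $K_1$. This is immediate because $f$ does not depend on $(z_1, w_1)$ and $K_1$ does not depend on $(z_j, w_j)$ for $j \geq 2$, so cross terms vanish.
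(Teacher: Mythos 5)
Your proof is correct and is essentially the paper's own argument: the paper omits the proof of this proposition, saying only that it is ``similar to that of Proposition \ref{thm21}'', and that proof is exactly the comparison you carry out --- $\mathcal{K}^{11}(0)$ (coming from the factor $\mathbf{B}^{(\l_1,\bmu)}$, hence with three distinct eigenvalues) cannot be similar to $\mathcal{K}^{jj}(0)=\l_j I_3$ for $j\geq 2$, contradicting Lemma \ref{curvature at 0 is diagonal}. One minor slip worth noting: in the proof of Theorem \ref{thm13} the matrix with three distinct diagonal entries is $\mathcal{K}^{22}_{(\bl',\bmu')}(0)$, attached to the Type I kernel $K^{(\bl',\bmu')}$ rather than to $\hat K$; the fact you actually need --- that the curvature of $\mathbf{B}^{(\l_1,\bmu)}$ at the origin has distinct eigenvalues --- comes from \cite{HOHS}, but this does not affect the validity of your argument.
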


Finally, in the following theorem we show that the tuple of multiplication operators by coordinate functions on the \rkhs $\bA^{(\bl,\bmu)}(\D^2)$ is homogeneous with respect to $\aut(\D^2)$ under certain condition..

\begin{thm}\label{thm19}
(i) The pair of multiplication operators $(M_{z_1},M_{z_2})$ on the \rkhs $\bA^{(\bl,\bmu)}(\D^2)$ with the reproducing kernel $K^{(\bl,\bmu)}$ on $\D^2$, $\bl=(\l_1,\l_2)$ and $\bmu=(\mu_1,\mu_2)$, is homogeneous under the action of the group $\aut(\D^2)$ if and only if $\l_1=\l_2$ and $\mu_1=\mu_2$.

(ii) The pair of multiplication operators $(M_{z_1},M_{z_2})$ on the \rkhs $\bH^{(\ba,\bb)}(\D^2)$ with the reproducing kernel $L^{(\ba,\bb)}$ on $\D^2$, $\ba=(\a_1,\a_2)$ and $\bb=(\b_1,\b_2)$, is homogeneous under the action of the group $\aut(\D^2)$ if and only if $\a_1=\a_2$ and $\b_1=1$.
\end{thm}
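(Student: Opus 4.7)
The plan is to exploit the semidirect product structure $\aut(\D^2) = \mob^2 \rtimes \langle \sigma \rangle$, where $\sigma(z_1,z_2) = (z_2,z_1)$ generates the $S_2$-factor. Since $\mob^2$-homogeneity of both families has already been established in Theorem \ref{thm11} and Theorem \ref{thm14x}(iii), the question of $\aut(\D^2)$-homogeneity reduces, via \cite[Theorem 3.1]{OICHO}, to the existence of a holomorphic cocycle $J_\sigma : \D^2 \to \mathrm{GL}(3,\C)$ satisfying the swap quasi-invariance
\begin{equation*}
K(\bz,\bw) = J_\sigma(\bz)\, K(\sigma \bz, \sigma \bw)\, J_\sigma(\bw)^*,
\end{equation*}
with $K$ equal to $K^{(\bl,\bmu)}$ or $L^{(\ba,\bb)}$ as appropriate.

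For the necessity direction I would invoke Lemma \ref{curvature at 0 is diagonal}, which forces $\mathcal K^{11}(0)$ and $\mathcal K^{22}(0)$ to be similar. In part (i) the eigenvalue computation underlying Proposition \ref{thm21} gives
\begin{equation*}
\mathcal K^{11}(0) = \text{diag}\!\left(\l_1,\ \tfrac{\l_1^2 \mu_1}{\l_1\mu_1 - 1},\ 2 + \tfrac{\l_1(2-\l_1\mu_1)}{1-\l_1\mu_1}\right),
\end{equation*}
with the analogous expression for $\mathcal K^{22}(0)$ in terms of $(\l_2,\mu_2)$. Equating traces yields $3\l_1 + 2 = 3\l_2 + 2$, hence $\l_1 = \l_2$; a second similarity invariant, extracted either from a further symmetric function of the eigenvalues or from coupling the identities $J_\sigma(0)\,\mathcal K^{11}(0)\,J_\sigma(0)^{-1} = \mathcal K^{22}(0)$ and $J_\sigma(0)\,K(0,0)\,J_\sigma(0)^* = K(0,0)$, then forces $\mu_1 = \mu_2$. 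In part (ii) the parallel calculation on $L^{(\ba,\bb)}$ from Theorem \ref{thm14x}(i) gives $\a_1 = \a_2$ from the trace, and because $\b_1$ enters the central matrix asymmetrically (only through the $(2,2)$ diagonal entry and the $(2,3),(3,2)$ off-diagonal entries), the same similarity requirement isolates $\b_1^2 = 1$, hence $\b_1 = 1$ since $\b_1 > 0$.

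For sufficiency in (i), assume $\l_1 = \l_2 = \l$ and $\mu_1 = \mu_2 = \mu$, and let $P$ be the $3 \times 3$ permutation matrix exchanging the second and third standard basis vectors of $\C^3$. A direct inspection of Proposition \ref{prop2} establishes
\begin{equation*}
K^{(\bl,\bmu)}(\sigma \bz, \sigma \bw) = P\, K^{(\bl,\bmu)}(\bz, \bw)\, P,
\end{equation*}
by three elementary checks: the diagonal factor obeys $D(\sigma\bz,\sigma\bw) = P\,D(\bz,\bw)\,P$; the scalar prefactor $(1-z_1\bar w_1)^{-\l-2}(1-z_2\bar w_2)^{-\l-2}$ is symmetric under the coordinate swap precisely because $\l_1 = \l_2$; and the central $3\times 3$ matrix is $P$-conjugation-invariant under the swap precisely because $\mu_1 = \mu_2$. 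The resulting constant cocycle $J_\sigma \equiv P$, combined with the $\mob^2$-cocycle of Proposition \ref{prop10}, assembles into a cocycle for (the universal cover of) $\aut(\D^2)$. Part (ii) is handled by the same template, replacing $P$ by the permutation matrix $Q$ exchanging the first and second basis vectors of $\C^3$; the hypothesis $\b_1 = 1$ is exactly what equalizes the $(1,1)$ and $(2,2)$ diagonal entries of the central matrix of $L^{(\ba,\bb)}$ so that $Q$-conjugation intertwines the coordinate swap.

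The main obstacle is the second step in the necessity direction: the similarity $\mathcal K^{11}(0) \sim \mathcal K^{22}(0)$ alone determines the traces but, a priori, admits a one-parameter locus of spurious solutions in $\mu_1, \mu_2$ (respectively $\b_1$). Ruling these out requires either a further similarity invariant or a direct invocation of the full holomorphic cocycle identity evaluated at a non-origin test point such as $((z,0),(0,z))$ to couple the parameters linearly. The attendant algebra in part (ii), where $\b_1$ appears multiplicatively in several entries of the central block of $L^{(\ba,\bb)}$, is the most error-prone piece of the argument.
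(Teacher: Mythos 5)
Your proposal follows essentially the same route as the paper: the same reduction of $\aut(\D^2)$-homogeneity to the coordinate swap $\sigma$ via the semidirect product structure, the same constant permutation-matrix intertwiner ($\varepsilon_2\leftrightarrow\varepsilon_3$ for $K^{(\bl,\bmu)}$, $\varepsilon_1\leftrightarrow\varepsilon_2$ for $L^{(\ba,\bb)}$) for sufficiency, and the same use of Lemma \ref{curvature at 0 is diagonal} with the eigenvalues of $\mathcal K^{ii}(0)$ for necessity. The ``main obstacle'' you flag is resolved exactly by the second symmetric function you mention: the paper equates traces \emph{and} determinants, and with $\l_1=\l_2$ and eigenvalues of the form $\l-a_i,\ \l,\ \l+a_i+2$ the determinant condition reduces to $(a_1-a_2)(a_1+a_2+2)=0$, forcing $\mu_1=\mu_2$ (and analogously $\b_1=1$) with no spurious solutions.
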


\begin{proof}
We prove the first part of the theorem. The proof of the second part is similar and therefore omitted. Let us begin by pointing out that $(M_{z_1},M_{z_2})$ is homogeneous \w the group $\mob^2$. We first consider the converse direction. So assume that $\l_1=\l_2$ and $\mu_1=\mu_2$. Since $\aut(\D^2)$ is the semi-direct product of $\mob^2$ and the permutation group $S_2$ of two elements, it is enough to show that $\si(M_{z_1},M_{z_2})=(M_{z_2},M_{z_1})$ is unitarily equivalent to $(M_{z_1},M_{z_2})$ where $\si\in S_2$ is the non-trivial element of $S_2$. This is equivalent to the fact that the pair of multiplication operators on the \rkhs $\H_{\si}$ with the reproducing kernel 
\beq\label{eqn20}
K_{\si}((z_1,z_2),(w_1,w_2)) := K((z_2,z_1),(w_2,w_1)), ~~~~z_i,w_i\in\D,i=1,2, 
\eeq  
is unitarily equivalent to $(M_{z_1},M_{z_2})$. Indeed, the matrix $A$ of the linear operator $T_{\si}\in \text{GL}(3,\C)$ defined by $T_{\si}(\varepsilon_1)=\varepsilon_1$ and $T_{\si}(\varepsilon_j)=\varepsilon_{\si(j)}$, $j=2,3$, satisfies the following equation 
$$K_{\si}(\bz,\bw)=AK(\bz,\bw)A^*,~~~~\bz=(z_1,z_2),\bw=(w_1,w_2)\in\D^2.$$
For the forward direction, assume that $(M_{z_1},M_{z_2})$ is homogeneous \w the group $\aut(\D^2)$. Since $(M_{z_1},M_{z_2})\in \mathrm B_3(\D^2)$, there exists a cocycle $J:\aut(\D^2)\times \D^2\ra \text{GL}(3,\C)$ such that $K^{(\bl,\bmu)}$ is quasi-invariant \w $J$. It follows that from Lemma \ref{curvature at 0 is diagonal} that $\text{tr }\mathcal{K}^{11}(0)=\text{tr }\mathcal{K}^{22}(0)$ and $\text{det }\mathcal{K}^{11}(0)=\text{det }\mathcal{K}^{22}(0)$. Consequently, we have that $\l_1=\l_2$ and $\mu_1=\mu_2$ since 
$\l_i$, $\frac{\l_i^2\mu_i}{\l_i\mu_i - 1}$ and $2+\frac{\l_i(2-\l_i\mu_i)}{1-\l_i\mu_i}$ are eigenvalues of $\mathcal{K}^{ii}$, $i=1,2$.
\end{proof}

\begin{rem}
(i) If a commuting $n$ - tuple of operators $\T=(T_1,\hdots,T_n)$ is homogeneous \w $\aut(\D^n)$, then $\T$ must be homogeneous under the action of the subgroup $\mob^n$ as well. Consequently, combining Proposition \ref{thm21}, Proposition \ref{thm22} and Theorem \ref{thm19} we see, for $n\geq 3$, that there are no irreducible homogeneous $n$ - tuple of operators in $\mathrm B_3(\D^n)$ \w $\aut(\D^n)$.

\vspace{0.05in}

(ii) For $n=2$, Proposition \ref{thm22} and Theorem \ref{thm19} together imply, up to unitary equivalence, that the pair of multiplication operators by coordinate functions on the reproducing kernel Hilbert spaces $\bA^{(\bl,\bmu)}(\D^2)$ with $\bl=(\l,\l)$ and $\bmu=(\mu,\mu)$ and $\bH^{(\ba,\bb)}$ with $\ba = (\a_1,\a_2)$ and $\bb=(1,\b_2)$ are the only irreducible homogeneous tuples of operators in $\mathrm B_3(\D^2)$ under the action of $\aut(\D^2)$.

\end{rem}

\medskip \textit{Acknowledgement}.
The authors are indebted to Professor Adam Kor{\'a}nyi for very detailed comments and suggestions.
They would also like to express their sincere gratitude to Professor Gadadhar Misra for his patient guidance and suggestions in the preparation of this paper.

\end{document}